\documentclass[a4paper,oneside]{article}
\usepackage{amsmath, amssymb, amsthm,graphicx}
\usepackage{xcolor,color}
\usepackage[font=small,labelfont=md,textfont=it]{caption}
\usepackage{floatrow,float}
\usepackage[titletoc, title]{appendix}
\usepackage[colorlinks,linkcolor=blue,citecolor=blue]{hyperref}
\usepackage{longtable}
\usepackage{pgfplots}
\usepackage{diagbox}
\usepackage{booktabs,makecell,multirow}
\usepackage[capitalise, nosort]{cleveref}
\usepackage{algorithm}
\usepackage{algorithmic}
\usepackage{amsmath,bm}
\usepackage{cases}
\usepackage{geometry}
\usepackage{extarrows}
\usepackage{tcolorbox}
\usepackage{bbm}
\usepackage{syntonly}
\usepackage[figuresright]{rotating}

\geometry{left = 3cm,right=3cm,top =3cm,bottom = 3.5cm }

\crefname{equation}{}{}
\crefname{lem}{Lemma}{Lemmas}
\crefname{thm}{Theorem}{Theorems}

\newcommand{\dd}{\,{\rm d}}
\newcommand{\R}{\,{\mathbb R}}

\newcommand{\dual}[1]{\left\langle {#1} \right\rangle}

\newcommand{\proxi}[0]{ {\bf prox}}

\newcommand{\dom}[0]{ {\bf dom\,}}
\newcommand{\argmin}[0]{ {\mathop{{\rm  argmin}}\,}}

\newcommand{\nm}[1]{\left\lVert {#1} \right\rVert}
\newcommand{\snm}[1]{\left\lvert {#1} \right\rvert}
\newcommand{\ssnm}[1]
{
	\left\vert\kern-0.25ex
	\left\vert\kern-0.25ex
	\left\vert
	{#1}
	\right\vert\kern-0.25ex
	\right\vert\kern-0.25ex
	\right\vert
}


\makeatletter
\def\spher@harm#1{%
	\vbox{\hbox{%
			\offinterlineskip
			\valign{&\hb@xt@2\p@{\hss$##$\hss}\vskip.2ex\cr#1\crcr}%
		}\vskip-.36ex}%
}
\def\gshone{\spher@harm{.}}
\def\gshtwo{\spher@harm{.&.}}
\def\gshthree{\spher@harm{.&.&.}}
\let\gsh\spher@harm
\makeatother

\newtheorem{coro}{Corollary}[section]
\newtheorem{prop}{Proposition}[section]
\newtheorem{Def}{Definition}[section]

\newtheorem{lem}{Lemma}[section]

\newtheorem{rem}{Remark}[section]

\newtheorem{thm}{Theorem}[section]

\newcounter{mnote}
\setcounter{mnote}{0}

\let\oldmarginpar\marginpar
\renewcommand\marginpar[1]
{\-\oldmarginpar[\raggedleft\footnotesize #1]{\raggedright\footnotesize #1}}

\makeatletter\def\@captype{table}\makeatother

\newfloatcommand{capbtabbox}{table}[][\FBwidth]
\floatsetup[table]{captionskip=2pt}

\usepackage[T1]{fontenc} 
\usepackage[utf8]{inputenc} 
\usepackage{authblk}

\begin{document}
	\title{
		\Large \bf Accelerated 
		differential inclusion for convex optimization
	}
	\author{Hao Luo\thanks{School of Mathematical Sciences, 
			Peking University, Beijing, 100871, China (Email: luohao@math.pku.edu.cn).}
	}
	\date{} 
	\maketitle
	\begin{abstract}
This paper introduces a second-order differential inclusion for unconstrained convex optimization. In continuous level, solution existence in proper sense is obtained
and exponential decay of a novel Lyapunov function along with the solution trajectory 
is derived as well. Then in discrete level, based on numerical discretizations of the continuous model, two inexact proximal point algorithms are proposed, and some new convergence rates are established via a discrete Lyapunov function.
	\end{abstract}
	{\bf Keywords}: convex optimization; inexact proximal point algorithm; differential inclusion; existence; Lyapunov function; 
	exponential decay; discretization; acceleration
	
	\section{Introduction}
	In this work, we investigate the accelerated differential inclusion
	\begin{equation}\label{eq:ADI-intro}
		\gamma x'' + (\mu+\gamma)x'+\partial f(x)\ni \xi,
	\end{equation}
	where $f:\mathcal H\to\R\cup\{+\infty\}$ is a proper and closed convex function on the finite-dimensional Hilbert space $\mathcal H$ and 
	$\xi\in C([0,\infty);\mathcal H)$ stands for small perturbation. The time scaling factor $\gamma$ in \cref{eq:ADI-intro} is governed by $\gamma' = \mu-\gamma$, and $\mu\geqslant 0$ stands for the strongly convex constant (cf. \cref{eq:mu}) of $f$. Throughout, $\mathcal H$ is equipped with the inner product $\dual{\cdot,\cdot}$ and the norm $\nm{\cdot} = \sqrt{\dual{\cdot,\cdot}}$.
	Besides, assume $\argmin f\neq\emptyset$ and let $x^*\in\argmin f$ be a global minimizer of $f$.
	
	Recently, many authors investigate first-order optimization methods from the 
	ordinary differential equation point of view. The gradient flow 
	\cite{lemaire_asymptotical_1996} 
	models the gradient descent method 
	and the proximal point algorithm (PPA) \cite{rockafellar_monotone_1976}. 
	The heavy ball system \cite{balti_asymptotic_2016,haraux_second_2012} 
	is associated with Polyak's heavy ball method \cite{polyak_methods_1964}. 
	The asymptotic vanishing damping (AVD) model, which is derived in \cite{
		Su;Boyd;Candes:2016differential} 
	and further studied and generalized in \cite{attouch_rate_2019,
		wibisono_variational_2016},
	recoveries Nesterov accelerated gradient (NAG) method \cite{Nesterov1983} 
	and FISTA \cite{beck_fast_2009}. The dynamical systems considered
	in \cite{attouch2020,chen_luo_first_2019,luo_chen_differential_2019,Siegel:2019,Wilson:2018} are closely related to Nesterov's optimal method \cite[Chapter 2]{Nesterov:2013Introductory} 
	based on estimate sequence technique. When $\xi=0$, the differential inclusion \cref{eq:ADI-intro} reduces to the NAG
	flow proposed in our previous work \cite{luo_chen_differential_2019}.
	
	Investigations of these dynamical systems with perturbation have also
	been studied by many authors. 
	In \cite{haraux_second_2012,jendoubi_asymptotics_2015}, the authors considered 
	the following heavy ball system with perturbation 
	\begin{equation}\label{eq:hb-xi}
		x''(t) + \alpha(t)x'(t)+\nabla f(x(t)) = \xi(t), 
		\quad t>0,
	\end{equation}
	where $\alpha(\cdot)$ is some continuous positive function on $\R_+$. 
	However, only weak convergence result 
	of the trajectory to \cref{eq:hb-xi} was established. 
	The perturbed generalization of the AVD  model \cite{Su;Boyd;Candes:2016differential} reads as follows
	\begin{equation}\label{eq:3/t-xi}
		x''(t) + \frac{\alpha}{t^\beta} x'(t) +\nabla f(x(t)) = \xi(t), 
		\quad t\geqslant t_0>0,
	\end{equation}
	where $\alpha>0$ and $0\leqslant \beta\leqslant 1$. 
	For $\beta=  1$, under the assumption $\xi\in L^1_{\nu}(0,\infty;\mathcal H)$ with $\nu(t) = (t+1)^{\min\{1,\alpha/3\}}$,
	the decay rate $O(t^{-\min\{3,2\alpha/3\}})$ 
	can be found in \cite{attouch_fast_convergence_2018,attouch_rate_2019,aujol_optimal_2017}. Here and in the sequel, $L^1_\nu(0,\infty;\mathcal H)$ denotes the standard $\mathcal H$-valued weighted $L^1$ space, with $\nu$ being some nonnegative measurable function on $(0,\infty)$. Balti and May \cite{balti_asymptotic_2016} studied the case $0\leqslant \beta<1$ 
	and established the rate $O(t^{-2\beta})$,
	provided that 
	$\xi\in L^1_{\nu}(0,\infty;\mathcal H)$ with $\nu(t) = (t+1)^{\beta}$. 
	Those decay estimates  yield the minimizing property of the solution trajectory $x(t) $, which converges weakly (or strongly under further assumption) to a limit in $\argmin f$.
	Based on this observation, Sebbouh et al. \cite{sebbouh_nesterovs_2019} 
	analysed the convergence behaviour of \cref{eq:3/t-xi}
	when $f$ satisfies some further local geometrical assumptions
	such as the flatness condition and the \L ojasiewicz property, 
	which are beyond the convexity of $f$. 
	They proved that there exists $T\geqslant 0$
	such that 
	\begin{equation}\label{eq:conv-ode-Sebbouhh}
		f(x(t))-f(x^*)+ \nm{x(t)-x^*}^2 + \nm{x'(t)}^2
		\leqslant Ce^{-mt^{1-\beta}},\quad t\geqslant T,
	\end{equation}
	with the condition $\xi\in L^1_{\nu}(0,\infty;\mathcal H)$,
	where $\nu(t) =e^{mt^{1-\beta}}$, $m\in(0,2\theta/(2+\theta))$ with some 
	$\theta\in[1,2]$ related to the local geometrical property of $f$. Particularly, if $f$ is strongly convex, then $\theta=1$ and $T=0$. Thus, \cref{eq:conv-ode-Sebbouhh} implies global exponential decay rate $O(e^{-2t/3})$, requiring that $\xi\in L^1_{\nu}(0,\infty;\mathcal H)$ with $\nu(t) =e^{2t/3}$. However, for this case, our accelerated differential inclusion \cref{eq:ADI-intro} can achieve the same decay rate under weaker assumption on $\xi$; see \cref{thm:conv-Lt-mu>0}.  
	
	For the nonsmooth setting, it is worth noticing some works related to the corresponding differential inclusions. In this case, the gradient flow becomes a first-order differential inclusion \cite{attouch_variational_2014}
	\begin{equation}\label{eq:1st-DI}
		x'(t)+\partial f(x(t))\ni\xi(t),\quad t>0.
	\end{equation}
	Since $\partial f$ is a set-valued maximal monotone operator, discontinuity can occur in $x'$ and classical $C^1$ solution to \cref{eq:1st-DI} may not exist. 
	For the second-order differential inclusion
	\begin{equation}\label{eq:2nd-DI}
		x''(t)+\partial f(x(t))\ni \xi\big(t,x(t),x'(t)\big),\quad t>0,
	\end{equation}
	the concept of energy-conserving solution has been introduced by \cite{paoli_existence_2000,schatzman_class_1978}. 
	Recently, Vassilis et al. \cite{vassilis_differential_2018} 
	extended the AVD model \cite{Su;Boyd;Candes:2016differential} to the nonsmooth setting:
	\[
	x''(t)+\frac{\alpha}{t}x'(t)+\partial f(x(t))\ni 0,\quad 
	\quad t\geqslant t_0>0,
	\]
	which is a special case of \cref{eq:2nd-DI}. 
	They obtained the existence of energy-conserving solution and established the minimizing property. Note that our model \cref{eq:ADI-intro} can also be viewed as a particular case of \cref{eq:2nd-DI}, and therefore the existence of energy-conserving solution can be established. Moreover, we shall derive new exponential decay estimate
	\[
	f(x(t))-f(x^*)
	\leqslant 
	e^{-t}\left(2\mathcal L_0+
	\nm{\xi}^2_{L^1_{\nu}(0,t;\mathcal H)}\right)\quad \text{ for all } t>0,
	\]
	provided that $\xi\in L^1_{\nu}(0,\infty;\mathcal H)$ with $\nu(t) = \sqrt{e^t/\gamma(t)}$. 
	With weaker condition on $\xi$, e.g. $\nm{\xi(t)} = O(t^{-p})$ with $p>0$, the rate $O(t^{-2p})$ can be obtained if $f$ is strongly convex assumption; see \cref{rem:rate-mu>0}.
	
	We now turn to algorithm aspect. 
	Numerical discretizations for perturbed 
	dynamical systems naturally lead to inexact optimization solvers such as inexact proximal gradient method (PGM), inexact PPA and the corresponding accelerated variants.
	On the other hand, convergence analyses of inexact convex optimization methods have already been widely studied. In summary, almost all the algorithms consider three types of approximations for the proximal mapping (see their definitions in \cref{sec:inexact-proxi}). 
	
	In the pioneering work \cite{rockafellar_monotone_1976}, 
	Rockafellar has considered an inexact PPA using
	$type$-3 approximation,
	and the convergence result was derived with 
	summable error. Then G\"{u}ler \cite{guler_new_1992} 
	proposed an  inexact accelerated PPA
	involving $type$-3 approximation as well
	and derived the rate $O(1/k^2)$ with computation error
	$\varepsilon_k=O(1/k^{3/2})$. In \cite{salzo_inexact_2012}, 
	Salzo and Villa studied the convergence of 
	an inexact PPA with $type$-1
	approximation and proved the rate $O(1/k)$ with error 
	$\varepsilon_k=O(1/k^2)$. However, in \cref{sec:AIPPA}, based on an implicit discretization of \cref{eq:ADI-intro}, we shall propose an inexact accelerated
	PPA using $type$-1 approximation 
	and prove the improved rate $O(1/k^2) $ with $\varepsilon_k=O(1/k^2)$.
	
	Villa et al. \cite{villa_accelerated_2013} 
	extended the convergence result in \cite{salzo_inexact_2012} to an inexact accelerated PGM. Aujol and Dossal 
	\cite{aujol_stability_2015} investigated the 
	inertial forward-backward algorithm with $type$-1 and 
	$type$-2 approximations, and carefully studied the convergence 
	rates under decay assumption on the error. Schmidt et al.
	\cite{schmidt_convergence_2011} considered an accelerated PGM  involving  $type$-1 approximation 
	and inexact gradient data. But no detailed convergence rate is 
	given with specific error, saying $\varepsilon_{k}=O(1/k^p)$. 
	In \cref{sec:AIPGM}, from a semi-implicit discretization of \cref{eq:ADI-intro}, 
	we obtain an accelerated PGM which adopts $type$-1 approximation and inexact gradient data as well. We shall analyse both convex and strongly convex cases and present some new estimates. Particularly, for strongly convex case, we establish the rate $O(1/k^{2p})$ when the computation error decays like $O(1/k^p)$. 
	
	To the end, we introduce some conventional functional spaces and list the arrangement of the rest part. Recall that $\nm{\cdot}$ is the underlying norm of the Hilbert space $\mathcal H$; we also use $\nm{\cdot}_{X}$ to denote the norm for any normed space $X$. Given $-\infty<a<b\leqslant \infty$, let $M(a,b;\mathcal H)$ be the space of $\mathcal H$-valued Radon measures; for $k\in\mathbb N$, $C^{k}(a,b;\mathcal H)$ stands for the space of $\mathcal H$-valued functions that are $k$-times continuous differentiable and set $C(a,b;\mathcal H) = C^0(a,b;\mathcal H)$; for $1\leqslant p\leqslant \infty$, $L^p(a,b;\mathcal H)$ is the conventional $\mathcal H$-valued $L^p$ space and for $1\leqslant  p<\infty$, denote by $L^p_\nu(a,b;\mathcal H)$ the standard $\mathcal H$-valued weighted $L^p$ space, where $\nu$ is some nonnegative measurable function on $(a,b)$; for $k\in\mathbb N$, let $W^{k,\infty}(a,b;\mathcal H)$ be an $\mathcal H$-valued Sobolev space \cite{attouch_variational_2014}; the space of all $\mathcal H$-valued functions with
	bounded variation is defined by $BV(a,b;\mathcal H)$ \cite{attouch_variational_2014}. For $\mu\geqslant 0$, let $\mathcal S_\mu^0$ be 
	the set of all properly closed convex functions on $\mathcal H$ such that
	\begin{equation}\label{eq:mu}
		f(y)\geqslant f(x)+\dual{p,y-x}+\frac{\mu}{2}\nm{x-y}^2,
	\end{equation}
	for all $x,y\in\dom f$ and $p\in\partial f(x)$. For $0\leqslant \mu\leqslant L<\infty$, we say $f\in\mathcal S_{\mu,L}^{1,1}$ if it belongs to $\mathcal S_{\mu}^0$ and has $L$-Lipschitz continuous gradient, i.e.,
	\[
	\nm{\nabla f(x)-\nabla f(y)}\leqslant L\nm{x-y}\quad \text{ for all } x,y\in \mathcal H.
	\]
	
	The rest of this paper is organized as follows. 
	In \cref{sec:exist}, existence of the energy-conserving
	solution is established, and 
	minimizing property of the solution trajectory 
	is proved as well. 
	Then in \cref{sec:AIPPA,sec:AIPGM}, an inexact accelerated PPA and an inexact accelerated PGM are obtained, respectively, from the implicit and semi-implicit discretizations, and convergence rate estimates are derived by using the tool of discrete Lyapunov function. Finally, in \cref{sec:num}, to investigate the performance of the proposed methods, two numerical experiments are presented.
	\section{Existence and Minimizing Property}
	\label{sec:exist}
	In this section, assume $f\in\mathcal S_\mu^0$ with $
	\mu\geqslant 0$ and the interior of the domain of $f$, denoted 
	by ${\rm int }\,\dom f$, is nonempty. We shall focus on the existence 
	of energy-conserving 
	solution (cf. \cref{def:solu}) to the accelerated differential inclusion
	\begin{equation}\label{eq:ADI}
		\gamma x'' + (\mu+\gamma)x'+\partial f(x)\ni \xi,
	\end{equation}
	with initial data
	\begin{equation}\label{eq:x0-x1}
		x (0) = x_0\in\dom f\quad\text{and}\quad x'(0) = x_1\in 
		\overline{\mathop{\cup}\limits_{\tau>0}\tau(x_0-\overline{\dom f})}.
	\end{equation}
	The scaling factor $\gamma$ satisfies 
	\begin{equation}\label{eq:gamma}
		\gamma' = \mu-\gamma,\quad \gamma(0) = \gamma_0>0,
	\end{equation}
	from which it is easy to obtain the exact solution $\gamma(t) = \mu+(\gamma_0-\mu)e^{-t}$.
	Hence $\gamma$ is positive and bounded, i.e., $\gamma_{\min}:=\min\{\gamma_0,\mu\}\leqslant \gamma(t)\leqslant \max\{\gamma_0,\mu\}=:\gamma_{\max}$ for all $t \geqslant 0$, and $\gamma(t)\to\mu$ as $t\to\infty$.
	\subsection{The Moreau--Yosida regularization}
	\label{sec:MY}
	Given any $\lambda >0$, introduce the {\it Moreau--Yosida 
		regularization} of $f$ by that \cite{rockafellar_convex_1970} 
	\begin{equation}\label{eq:MY-f}
		f_\lambda(x) := \inf_{y\in \mathcal H}\left(f(y)+\frac{1}{2\lambda}\nm{y-x}^2\right)
		\qquad \forall\,x\in \mathcal H,
	\end{equation}
	where the infimum is attained at the unique minimizer
	\begin{equation}\label{eq:prox-f}
		\proxi_{\lambda f}(x) :=\mathop{ \argmin}\limits_{y\in \mathcal H}\left(f(y)+\frac{1}{2\lambda}\nm{y-x}^2\right)
		\qquad \forall\,x\in \mathcal H.
	\end{equation}
	Therefore, one must have
	\begin{equation}\label{eq:sub-inf}
		\frac{1}{\lambda}\left(x-\proxi_{\lambda f}(x)\right)
		\in\partial f(\proxi_{\lambda f}(x)),
	\end{equation}
	and it follows that any fixed-point of $\proxi_{\lambda f}$ belongs 
	to $\argmin f$. On the other hand, by definition, it is not hard to see (cf. \cite[Remark 12.24]{Bauschke2019})
	\begin{equation}\label{eq:f_lambda}
		f_{\lambda}(x) =f(\proxi_{\lambda f}(x)) +  
		\frac{1}{2\lambda}\nm{x-\proxi_{\lambda f}(x)}^2,
	\end{equation}
	and it holds $f(\proxi_{\lambda f}(x)) \leqslant 	f_{\lambda}(x) 	\leqslant f(x)$ for all $x\in \mathcal H$.
	Hence, any $x^*\in\argmin f$ satisfies $x^* = \proxi_{\lambda f}(x^*)$, and we conclude that $f_\lambda$ has the same minimizer and minimum as that of $f$. Besides, by \cite[Proposition 17.2.2]{attouch_variational_2014}, we have the convergence property $\lim\limits_{\lambda\to 0}f_{\lambda}(x) = f(x)$ for all $x\in \mathcal H$.
	
	By \cite[Proposition 17.2.1]{attouch_variational_2014}, we have the identity $\nabla f_\lambda(x) = 
	\frac{1}{\lambda}
	\left(x-\proxi_{\lambda f}(x)\right)$,
	which, together with the firmly non-expansive property (see \cite[Proposition 12.27]{Bauschke2019})
	\[
	(1+\lambda\mu)\nm{\proxi_{\lambda f}(x)-\proxi_{\lambda f}(y)}^2\leqslant \dual{\proxi_{\lambda f}(x)-\proxi_{\lambda f}(y),x-y},
	\]
	implies that $(1+\lambda\mu)f_\lambda$ is $\mu$-strongly convex and $\nabla f_\lambda$ is $1/\lambda$-Lipschitz continuous 
	for any fixed $\lambda>0$.
	\subsection{Energy-conserving solution}
	Following \cite{paoli_existence_2000,schatzman_class_1978}, 
	let us introduce the concept of {\it energy-conserving solution}.
	\begin{Def}\label{def:solu}
		We call $x:[0,\infty)\to \mathcal H$ an energy-conserving
		solution to \cref{eq:ADI} 
		with initial condition \cref{eq:x0-x1} if it satisfies the following.
		\begin{itemize}
			\item $x\in W^{1,\infty}_{loc}(0,\infty;\mathcal H),\,x(0) = x_0$ 
			and $x(t)\in \dom f$ for all $t>0$.
			\item $x'\in BV_{loc}([0,\infty);\mathcal H),\,x'(0+) = x_1$.		
			\item For almost all $t>0$, there holds the energy equality:
			\[
			\begin{aligned}
				{}&	f(x(t)) + \frac{\gamma(t)}2\nm{x'(t)}^2
				+
				\int_{0}^{t}\frac{\mu+3\gamma(s)}{2}\nm{x'(s)}^2
				{\mathrm d}s\\
				= {}& f(x_0) + \frac{\gamma_0}2\nm{x_1}^2+	
				\int_{0}^{t}
				\dual{\xi(s),x'(s)}{\mathrm d}s.
			\end{aligned}
			\]
			\item There exists some $\omega\in M(0,\infty;\mathcal H)$ such that $		\gamma x'' + (\mu+\gamma)x'+\omega = \xi$
			holds in the sense of distributions, and for any $T>0$, we have
			\[
			\int_{0}^{T}\big(f(y(t))-f(x(t))\big){\mathrm d}t
			\geqslant\dual{	\omega,y-x}_{C([0,T];\mathcal H)}
			\quad \text{for all } y\in C([0,T];\mathcal H).
			\]
		\end{itemize}
	\end{Def}
	Let $\lambda>0$ be given and set $F_\lambda = (1+\lambda\mu)f_\lambda$. According to \cref{sec:MY}, we know that $F_\lambda\in\mathcal S_{\mu,\mu+1/\lambda}^{1,1}$. Instead of \cref{eq:ADI}, let us start from a family of regularized problem 
	\begin{equation}\label{eq:regu-ode}
		\gamma x_\lambda''+(\mu+\gamma)x_\lambda'+\nabla F_\lambda(x_\lambda)=\xi,
	\end{equation}
	with initial data $x_\lambda(0) = x_0 $ and $x_\lambda'(0) = x_1$. Since for any fixed $\lambda>0$, $\nabla F_\lambda$ is Lipschitz continuous, 
	applying the standard Picard approximation argument (see the proof of \cite[Theorem 7.3]{brezis_functional_2011}) implies that \cref{eq:regu-ode} admits a unique solution $x_\lambda\in C^{2}([0,\infty);\mathcal H)$.
	
	In the following, we shall establish some priori estimates of $x_\lambda$, 
	then take the limit $\lambda\to0$ to obtain an energy-conserving solution to the original differential inclusion \cref{eq:ADI}.
	\begin{lem}\label{lem:est-x-regu}
		Let $\lambda>0$ be given such that $\lambda\mu\leqslant 1$. For any $T>0$, we have the estimate
		\begin{equation}
			\label{eq:basic-est}
			\nm{x'_\lambda}_{C([0,T];\mathcal H)}+	
			\nm{F_\lambda(x_\lambda) }_{C([0,T])}
			+	\nm{x''_\lambda}_{L^1(0,T;\mathcal H)}+	
			\nm{ \nabla F_\lambda(x_\lambda) }_{L^1(0,T;\mathcal H)}
			\leqslant C_1,
		\end{equation}
		and moreover, if $\dom f = \mathcal H$, then 
		\begin{equation}
			\label{eq:basic-est-2}
			\nm{x''_\lambda}_{C([0,T];\mathcal H)}+	
			\nm{\nabla F_\lambda(x_\lambda) }_{C([0,T];\mathcal H)}
			\leqslant C_2,
		\end{equation}
		where both $C_1$ and $C_2$ are bounded and independent of $\lambda$. 	
	\end{lem}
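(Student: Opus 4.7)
The plan is to derive an energy identity by pairing \eqref{eq:regu-ode} with $x_\lambda'$, use a Bihari--Gronwall argument for the supremum bounds on $x_\lambda'$ and $F_\lambda(x_\lambda)$, then exploit the Yosida-regularisation bound $\nm{\nabla f_\lambda(x)}\leqslant\inf\{\nm{v}:v\in\partial f(x)\}$ for pointwise control of $\nabla F_\lambda(x_\lambda)$, and finally close the estimate on $x_\lambda''$ via the ODE itself.

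Testing \eqref{eq:regu-ode} against $x_\lambda'$ and using $\gamma'=\mu-\gamma$ to combine $\gamma\dual{x_\lambda'',x_\lambda'}$ with $(\mu+\gamma)\nm{x_\lambda'}^2$, together with $\dual{\nabla F_\lambda(x_\lambda),x_\lambda'}=\tfrac{d}{dt}F_\lambda(x_\lambda)$, gives
\[
\frac{d}{dt}\!\left(\frac{\gamma}{2}\nm{x_\lambda'}^2+F_\lambda(x_\lambda)\right)+\frac{\mu+3\gamma}{2}\nm{x_\lambda'}^2=\dual{\xi,x_\lambda'}.
\]
Integration on $[0,t]$, plus the observations that $\inf F_\lambda=(1+\lambda\mu)\inf f$, $F_\lambda(x_0)\leqslant(1+\lambda\mu)f(x_0)\leqslant 2f(x_0)$, and $\gamma(t)\geqslant\gamma_{\min}$, produces
\[
\frac{\gamma_{\min}}{2}\nm{x_\lambda'(t)}^2\leqslant A_0+\int_0^t\nm{\xi(s)}\nm{x_\lambda'(s)}\,{\rm d}s,
\]
with $A_0$ independent of $\lambda$ thanks to $\lambda\mu\leqslant 1$. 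Differentiating $\sqrt{\cdot}$ of the right-hand side (the standard Bihari trick) yields $\nm{x_\lambda'}_{C([0,T];\mathcal H)}\leqslant C$ uniformly in $\lambda$, and substituting back into the identity gives the corresponding uniform $C([0,T])$ bound on $F_\lambda(x_\lambda)$; consequently $x_\lambda$ lies in a fixed bounded set $K\subset\mathcal H$.

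For the $L^1$ bound on $\nabla F_\lambda(x_\lambda)$, I invoke the classical Yosida-approximation property: since $\nabla f_\lambda(x)\in\partial f(\proxi_{\lambda f}(x))$, $\proxi_{\lambda f}(x)-x=-\lambda\nabla f_\lambda(x)$, and $\partial f$ is monotone, a one-line calculation gives $\nm{\nabla f_\lambda(x)}\leqslant\nm{v}$ for every $v\in\partial f(x)$. Combined with ${\rm int}\,\dom f\neq\emptyset$ and the boundedness of $\partial f$ on compact subsets of ${\rm int}\,\dom f$, and provided $x_\lambda(t)$ stays in such a subset, this furnishes a uniform pointwise, and hence $L^1$, bound on $\nm{\nabla F_\lambda(x_\lambda)}=(1+\lambda\mu)\nm{\nabla f_\lambda(x_\lambda)}\leqslant 2\nm{\partial^0 f(x_\lambda)}$, where $\partial^0 f(x)$ denotes the minimum-norm element of $\partial f(x)$. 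The ODE then supplies the $L^1$ bound on $x_\lambda''$ via
\[
\gamma_{\min}\nm{x_\lambda''}\leqslant\nm{\xi}+(\mu+\gamma_{\max})\nm{x_\lambda'}+\nm{\nabla F_\lambda(x_\lambda)}.
\]

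Under $\dom f=\mathcal H$, continuity of $f$ and boundedness of $\partial f$ on bounded subsets of $\mathcal H$ strengthen the Yosida comparison into a genuinely uniform pointwise bound on $\nm{\nabla F_\lambda(x_\lambda(t))}$, upgrading the $L^1$ estimate to a $C([0,T];\mathcal H)$ estimate and, via the ODE again, giving the matching $C([0,T];\mathcal H)$ bound on $x_\lambda''$. The main obstacle in the general case is precisely the confinement argument: to apply the Yosida comparison one must ensure that $x_\lambda$ stays in a compact subset of ${\rm int}\,\dom f$ uniformly in $\lambda$, which requires an auxiliary argument beyond the pure energy estimate (e.g., exploiting the viability property of the differential inclusion together with the admissibility condition \eqref{eq:x0-x1}).
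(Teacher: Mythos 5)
Your energy identity and the Gronwall step are correct and are essentially how the paper (via \cite[Lemma 4.1]{paoli_existence_2000}) obtains the $C([0,T])$ bounds on $x_\lambda'$ and $F_\lambda(x_\lambda)$; likewise your treatment of the case $\dom f=\mathcal H$ --- local boundedness of $\partial f$ on the bounded set swept by $x_\lambda$ combined with the Yosida comparison $\nm{\nabla f_\lambda(x)}\leqslant\nm{\partial^0 f(x)}$ --- is exactly the paper's argument for \cref{eq:basic-est-2}. (One small slip: when $\mu=0$ you have $\gamma_{\min}=\min\{\gamma_0,\mu\}=0$, so the divisions by $\gamma_{\min}$ are vacuous; you must instead use $\inf_{[0,T]}\gamma=\mu+(\gamma_0-\mu)e^{-T}>0$, which is harmless since $C_1$ and $C_2$ are allowed to depend on $T$.)

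The genuine gap is the one you flag yourself, and it is not a removable technicality. The $L^1(0,T)$ bounds on $\nabla F_\lambda(x_\lambda)$ and $x_\lambda''$ in \cref{eq:basic-est} are asserted for general $f$ with only ${\rm int}\,\dom f\neq\emptyset$, and there is no confinement of $x_\lambda$ to a compact subset of ${\rm int}\,\dom f$: the entire point of the energy-conserving framework of Schatzman and Paoli is that the limiting trajectory may repeatedly reach the boundary of $\dom f$, where $\partial^0 f$ blows up, so a pointwise Yosida comparison cannot deliver even an $L^1$ bound. The paper closes this by mimicking \cite[Lemma 4.2]{paoli_existence_2000}, which uses a different mechanism: fix $z$ and $\rho>0$ with $B(z,\rho)\subset\dom f$ and $f$ bounded above on $B(z,\rho)$ (possible since ${\rm int}\,\dom f\neq\emptyset$ and $f$ is convex). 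The convexity inequality $F_\lambda(z+\rho w)\geqslant F_\lambda(x_\lambda)+\dual{\nabla F_\lambda(x_\lambda),z+\rho w-x_\lambda}$, optimized over unit vectors $w$, gives the pointwise bound $\rho\nm{\nabla F_\lambda(x_\lambda)}\leqslant \dual{\nabla F_\lambda(x_\lambda),x_\lambda-z}+C$. Integrating in $t$ and substituting $\nabla F_\lambda(x_\lambda)=\xi-\gamma x_\lambda''-(\mu+\gamma)x_\lambda'$ into the right-hand side (with an integration by parts on the term $\dual{\gamma x_\lambda'',x_\lambda-z}$) leaves only quantities controlled by the already-established $C([0,T])$ bounds, which yields $\nm{\nabla F_\lambda(x_\lambda)}_{L^1(0,T;\mathcal H)}\leqslant C$ with no interiority requirement on the trajectory; the $L^1$ bound on $x_\lambda''$ then follows from the equation as you indicate. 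Without some version of this argument your proof only covers the case $\dom f=\mathcal H$.
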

	\begin{proof}
		Following \cite[Lemma 4.1]{paoli_existence_2000}, we can prove
		\[
		\nm{x'_\lambda}_{C([0,T];\mathcal H)}+	
		\nm{F_\lambda(x_\lambda) }_{C([0,T])}
		\leqslant B_1<\infty,
		\]
		and mimicking the proof of \cite[Lemma 4.2]{paoli_existence_2000}, it is possible to establish
		\[
		\nm{x''_\lambda}_{L^1(0,T;\mathcal H)}+	
		\nm{ \nabla F_\lambda(x_\lambda) }_{L^1(0,T;\mathcal H)}
		\leqslant B_2<\infty.
		\]
		Above, both $B_1$ and $B_2$ are independent of $\lambda$. Collecting them proves \cref{eq:basic-est}.
		
		If $\dom f = \mathcal H$, then $f$ is continuous on $\mathcal H$ since it is convex. From \cref{eq:basic-est}, we conclude that $\nm{x_\lambda}_{C([0,T];\mathcal H)}\leqslant C_3$, where $C_3$ is independent of $\lambda$.
		Therefore, by \cite[Proposition 9.5.2]{attouch_variational_2014}, $\partial f(x_\lambda(t))\neq \emptyset$ is a closed convex bounded set for all $t\in[0,T]$, and there exists some $C_4>0$, which is independent of $\lambda$ as well, such that
		\[
		\sup_{0\leqslant t\leqslant T}\left\{
		\nm{p}:\,p\in \partial f(x_\lambda(t))\right\}\leqslant  C_4.
		\]
		Hence, by \cite[Proposition 17.2.2 (iii)]{attouch_variational_2014}, it follows that
		\[
		\max_{0\leqslant t\leqslant T} \nm{\nabla F_\lambda(x_\lambda(t))}
		=(1+\lambda\mu)	\max_{0\leqslant t\leqslant T} \nm{\nabla f_\lambda(x_\lambda(t))}\leqslant  2C_4,
		\]
		which together with \cref{eq:regu-ode,eq:basic-est} implies 
		\cref{eq:basic-est-2} and finishes the proof.
	\end{proof}
	\begin{thm}\label{thm:regu-x}
		The accelerated differential inclusion \cref{eq:ADI} admits an energy-conserving solution $x:[0,\infty)\to \mathcal H$ in the sense of \cref{def:solu}, and if additionally $\dom f = \mathcal H$, then 
		$x\in W^{2,\infty}_{loc}(0,\infty;\mathcal H)
		\cap C^1([0,\infty);\mathcal H)$ and \cref{eq:ADI} holds for almost all $t>0$.
	\end{thm}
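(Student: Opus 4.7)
The natural plan is to treat \cref{eq:regu-ode} as a Moreau--Yosida approximation of \cref{eq:ADI} and pass to the limit $\lambda\to 0$, so the entire proof is driven by the uniform estimates in \cref{lem:est-x-regu}. Since $\nm{x'_\lambda}_{L^\infty(0,T;\mathcal H)}$ is bounded, the family $\{x_\lambda\}$ is equicontinuous with $x_\lambda(0)=x_0$, hence by the Arzel\`a--Ascoli theorem and a diagonal extraction we can pick a subsequence, still denoted $x_\lambda$, and a limit $x\in W^{1,\infty}_{loc}(0,\infty;\mathcal H)$ with $x(0)=x_0$ such that $x_\lambda\to x$ uniformly on compact subsets of $[0,\infty)$ and $x'_\lambda\rightharpoonup x'$ weakly-$*$ in $L^\infty_{loc}$. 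The uniform $L^1$ bound on $x''_\lambda$ then forces $x'\in BV_{loc}([0,\infty);\mathcal H)$ with $x'(0+)=x_1$, while the uniform $L^1$ bound on $\nabla F_\lambda(x_\lambda)$ yields, up to a further subsequence, weak-$*$ convergence $\nabla F_\lambda(x_\lambda)\rightharpoonup\omega$ in $M(0,T;\mathcal H)$ for every $T>0$. Passing to distributions in \cref{eq:regu-ode} gives $\gamma x''+(\mu+\gamma)x'+\omega=\xi$ as required.

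The next task is to identify $\omega$ with an element of the ``integrated subdifferential'' appearing in \cref{def:solu} and to prove $x(t)\in\dom f$. Here I would rely on \cref{eq:sub-inf}: for every $\lambda>0$, $\nabla f_\lambda(x_\lambda(t))\in\partial f(\proxi_{\lambda f}(x_\lambda(t)))$, and the uniform $L^\infty$ bound on $F_\lambda(x_\lambda)$ together with $f(\proxi_{\lambda f}(x_\lambda))\leqslant f_\lambda(x_\lambda)$ bounds $\nm{\proxi_{\lambda f}(x_\lambda)-x_\lambda}$ by $O(\sqrt{\lambda})$. Thus $\proxi_{\lambda f}(x_\lambda)\to x$ uniformly, and by the convexity inequality
\[
\int_0^T\bigl(f(y(t))-f(\proxi_{\lambda f}(x_\lambda(t)))\bigr)\dd t
\geqslant \int_0^T\dual{\nabla F_\lambda(x_\lambda(t))/(1+\lambda\mu),\,y(t)-\proxi_{\lambda f}(x_\lambda(t))}\dd t,
\]
tested against $y\in C([0,T];\mathcal H)$, passing $\lambda\to 0$ via Fatou on the left and the weak-$*$ convergence on the right delivers the distributional subdifferential inequality for $\omega$. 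In particular, finiteness of the left-hand side forces $f(x(\cdot))<\infty$ a.e.\ on $[0,T]$, which combined with closedness of $\dom f$ and continuity of $x$ yields $x(t)\in\dom f$ for all $t>0$.

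The hardest step is the energy equality, because it contains the quadratic terms $\tfrac{\gamma(t)}{2}\nm{x'(t)}^2$ and $\int_0^t\tfrac{\mu+3\gamma}{2}\nm{x'}^2\dd s$ and only weak-$*$ convergence of $x'_\lambda$ is available a priori. I would proceed by writing the energy identity for the smooth problem \cref{eq:regu-ode} (which is classical since $\nabla F_\lambda$ is Lipschitz and $x_\lambda\in C^2$) and then upgrade the weak to strong convergence of $x'_\lambda$. The standard device, as in \cite{paoli_existence_2000,vassilis_differential_2018}, is to use the equation itself: multiplying \cref{eq:regu-ode} by $x'_\lambda$ produces a dissipation identity, while the monotonicity of $\partial f$ together with the $\lambda\to 0$ limit of $\nabla F_\lambda(x_\lambda)$ gives a matching dissipation inequality in the limit; equality of the two energies across the limit then forces $x'_\lambda\to x'$ strongly in $L^2_{loc}$, and the energy equality of \cref{def:solu} follows by passing to the limit in each term, using also the lower semicontinuity of $f$ and $f(\proxi_{\lambda f}(x_\lambda))\to f(x)$.

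Finally, the regularity upgrade when $\dom f=\mathcal H$ is immediate from \cref{eq:basic-est-2}: the uniform $L^\infty$ bounds on $x''_\lambda$ and $\nabla F_\lambda(x_\lambda)$ give $x\in W^{2,\infty}_{loc}(0,\infty;\mathcal H)\cap C^1([0,\infty);\mathcal H)$, while the measure $\omega$ now admits an $L^\infty_{loc}$ density which, by the identification above, is a selection of $\partial f(x(\cdot))$ a.e., so \cref{eq:ADI} indeed holds pointwise almost everywhere.
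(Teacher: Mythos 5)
Your proposal follows essentially the same route as the paper: Moreau--Yosida regularization of \cref{eq:ADI}, the uniform estimates of \cref{lem:est-x-regu}, Arzel\`a--Ascoli plus weak-$*$ compactness to extract a limit, identification of the limit measure through the convexity inequality and strong convergence of $x_\lambda'$ for the energy equality, and the upgraded bound \cref{eq:basic-est-2} for the case $\dom f=\mathcal H$ --- the paper simply outsources these verifications to Lemmas 4.3--4.5 of Paoli and Theorem 17.2.2 of Attouch et al., which is what you reconstruct. One small correction: $\dom f$ need not be closed for a proper closed convex $f$, so you cannot deduce $x(t)\in\dom f$ for all $t$ from ``a.e.\ finiteness plus closedness of $\dom f$''; instead combine the uniform bound on $\nm{F_\lambda(x_\lambda)}_{C([0,T])}$ with $f(\proxi_{\lambda f}(x_\lambda(t)))\leqslant f_\lambda(x_\lambda(t))$, the uniform convergence $\proxi_{\lambda f}(x_\lambda)\to x$ you already established, and lower semicontinuity of $f$, which yields $f(x(t))<\infty$ for \emph{every} $t$.
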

	\begin{proof}
		Given any $T>0$, according to the estimate \cref{eq:basic-est}, it is clear that $\{x_\lambda\}$ is equicontinuous and 
		bounded in $ C([0,T];\mathcal H)$. Invoking the well-known Ascoli--Arzel\`{a} theorem (see \cite[Theorem 6.4 on page 267]{dugundji_topology_1966}), there 
		exists some $x\in C([0,T];\mathcal H)$ and a subsequence which is also denoted by $\{x_\lambda\}$, such that $	x_\lambda\overset{\lambda\to0}\longrightarrow x$ in $C([0,T];\mathcal H)$. As \cref{lem:est-x-regu} also implies that $\{x'_\lambda\}$ is bounded in $L^\infty (0,T;\mathcal H)$, by \cite[Corollary 3.30]{brezis_functional_2011}, there exists $z\in L^\infty(0,T;\mathcal H)$ and a subsequence, such that 
		\begin{equation}\label{eq:dx-regu-conv-Linf}
			\begin{aligned}
				&x'_\lambda\overset{\lambda\to0}\longrightarrow z
				&\quad{\rm weak^*~in~} L^\infty(0,T;\mathcal H).
			\end{aligned}
		\end{equation}
		We conclude immediately that $z = x'$ in the sense of distributions. 
		This means $x\in W^{1,\infty}(0,T;\mathcal H)$. 
		Following \cite[Lemmas 4.3, 4.4 and 4.5]{paoli_existence_2000}, one can verify that $x$ satisfies with all the rest conditions in \cref{def:solu}, 
		and moreover, for almost all $t\in(0,T)$,
		\begin{equation}\label{eq:limit}
			F_\lambda(x_{\lambda}(t))
			\overset{\lambda\to0}{\longrightarrow}
			f(x(t))
			\quad\text{and}\quad
			x'_{\lambda}(t)
			\overset{\lambda\to0}{\longrightarrow}
			x'(t).
		\end{equation}
		
		If $\dom f = \mathcal H$, then by \cref{lem:est-x-regu}, $\{x'_\lambda\}$ 
		is bounded and equicontinuous in $C([0,T];\mathcal H)$. Invoking again the Ascoli--Arzel\`{a} theorem, there exits a subsequence (still denoted by $\{x'_\lambda\}$), such that $x_\lambda'$ converges to $x'$ in $ C([0,T];\mathcal H)$. Moreover, we have $	x''_{\lambda}
		\overset{\lambda\to0}{\longrightarrow}v$ weak$^*$ in $L^\infty(0,T;\mathcal H)$,
		which indicates that $v$ is the generalized derivative of $x'$. Therefore, $x\in W^{2,\infty}(0,T;\mathcal H)\cap C^1([0,T];\mathcal H)$ for 
		any $T>0$. Analogously to \cref{eq:limit}, we have $F_\lambda(x_{\lambda})
		\overset{\lambda\to0}{\longrightarrow}f(x)$ in $C([0,T])$. Now, invoking the proof technique proposed in \cite[Theorem 17.2.2]{attouch_variational_2014}, one can establish
		\[
		f(x) + f^*(\xi-\gamma x''-(\mu+\gamma)x')=\dual{\xi-\gamma x''-(\mu+\gamma)x',x},
		\]
		for almost all $t>0$, where $f^*$ denotes the conjugate function of $f$. 
		By \cite[Proposition 9.5.1]{attouch_variational_2014}, 
		the above identity is equivalent to \cref{eq:ADI}. This finishes the proof.
	\end{proof}
	\subsection{Minimizing property}
	\label{sec:mini}
	In this section, we shall prove the minimizing property of the energy-conserving solution to \cref{eq:ADI} and establish the decay rate by using Lyapunov functions.
	
	Given any $\lambda>0$, let $x_\lambda$ be the unique solution to \cref{eq:regu-ode} and set $v_\lambda = x_{\lambda}+x_\lambda'$, then we define $\mathcal E_\lambda(t):=	{}\mathcal L_\lambda(t)-\Xi_\lambda(t)$, where 
	\begin{numcases}{}
		\label{eq:Xit}
		\Xi_\lambda(t):={}\int_{0}^{t}e^{s-t}
		\dual{\xi(s),v_\lambda(s)-x^*}{\mathrm d}s,\\
		\label{eq:Lt}	
		\mathcal L_\lambda(t):=	{}F_\lambda(x_\lambda(t))-F_\lambda(x^*)
		+\frac{\gamma(t)}{2}
		\nm{v_\lambda(t)-x^*}^2.
	\end{numcases}
	We first establish the minimizing property of $x_\lambda$, then 
	take the limit $\lambda\to0$ to obtain the desired results.
	\begin{lem}\label{lem:conv-Et-regu}
		Given any $\lambda>0$, it holds that 
		\begin{equation}\label{eq:conv-Et-regu}
			\mathcal E'_\lambda(t)
			\leqslant-\mathcal E_\lambda(t)
			-\frac{\mu}{2}\nm{x_\lambda'(t)}^2
			\quad\text{ for all }0\leqslant t<\infty,
		\end{equation}
		which implies
		\begin{equation}\label{eq:conv-Lt-regu}
			\mathcal L_\lambda(t)
			+\frac{\mu}{2}
			\int_{0}^{t}e^{s-t}\nm{x_\lambda'(s)}^2{\mathrm d}s
			\leqslant 
			e^{-t}
			\left(
			2	\mathcal L_\lambda(0)+R^2(t)
			\right),
		\end{equation}
		where $	R(t):=\int_{0}^{t}
		\nm{\xi(s)}
		e^{s/2}/\sqrt{\gamma(s)}
		\,{\mathrm d}s$.
	\end{lem}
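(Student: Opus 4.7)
The plan is to differentiate $\mathcal L_\lambda$ along the trajectory of the regularized problem \cref{eq:regu-ode}, apply the $\mu$-strong convexity of $F_\lambda$ to absorb the gradient pairing, and then integrate via a Bihari-type argument. First, using $\gamma' = \mu - \gamma$ and $v_\lambda' = x_\lambda' + x_\lambda''$, I would substitute $\gamma x_\lambda'' = \xi - (\mu+\gamma)x_\lambda' - \nabla F_\lambda(x_\lambda)$ from \cref{eq:regu-ode} into the $\gamma\dual{v_\lambda-x^*, v_\lambda'}$ term. Splitting $v_\lambda - x^* = (x_\lambda - x^*) + x_\lambda'$, the pairing $\dual{x_\lambda',\nabla F_\lambda(x_\lambda)}$ cancels against the contribution from differentiating $F_\lambda(x_\lambda)$, leaving
\[
\mathcal L_\lambda' = \frac{\mu-\gamma}{2}\nm{v_\lambda-x^*}^2 - \dual{\nabla F_\lambda(x_\lambda), x_\lambda-x^*} - \mu\dual{v_\lambda-x^*, x_\lambda'} + \dual{\xi, v_\lambda-x^*}.
\]

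Next, I would invoke the $\mu$-strong convexity of $F_\lambda$, which yields $\dual{\nabla F_\lambda(x_\lambda), x_\lambda-x^*} \geqslant F_\lambda(x_\lambda) - F_\lambda(x^*) + \frac{\mu}{2}\nm{x_\lambda-x^*}^2$. After adding $\mathcal L_\lambda$ to both sides, the quadratic terms reorganize into the algebraic identity (obtained from $v_\lambda - x^* = (x_\lambda-x^*) + x_\lambda'$)
\[
\frac{\mu}{2}\nm{v_\lambda-x^*}^2 - \mu\dual{v_\lambda-x^*, x_\lambda'} - \frac{\mu}{2}\nm{x_\lambda-x^*}^2 = -\frac{\mu}{2}\nm{x_\lambda'}^2,
\]
so that $\mathcal L_\lambda' + \mathcal L_\lambda \leqslant \dual{\xi, v_\lambda-x^*} - \frac{\mu}{2}\nm{x_\lambda'}^2$. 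A direct differentiation of \cref{eq:Xit} gives $\Xi_\lambda' + \Xi_\lambda = \dual{\xi, v_\lambda-x^*}$; subtracting proves \cref{eq:conv-Et-regu}.

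For the integrated bound \cref{eq:conv-Lt-regu}, I would multiply \cref{eq:conv-Et-regu} by $e^t$, integrate from $0$ to $t$, and use $\Xi_\lambda(0)=0$ to obtain
\[
A(t) := \mathcal L_\lambda(t) + \frac{\mu}{2}\int_0^t e^{s-t}\nm{x_\lambda'(s)}^2\dd s \leqslant e^{-t}\mathcal L_\lambda(0) + \Xi_\lambda(t).
\]
The main difficulty is converting the linear-in-$\nm{v_\lambda-x^*}$ perturbation $\Xi_\lambda(t)$ into the quadratic quantity $R^2(t)$. Here I would exploit $A(s) \geqslant \mathcal L_\lambda(s) \geqslant \frac{\gamma(s)}{2}\nm{v_\lambda(s)-x^*}^2$ to bound $\nm{v_\lambda(s)-x^*} \leqslant \sqrt{2A(s)/\gamma(s)}$, and then Cauchy--Schwarz applied to $\Xi_\lambda(t)$ gives, after setting $\psi(t) := e^t A(t)$,
\[
\psi(t) \leqslant \mathcal L_\lambda(0) + \sqrt{2}\int_0^t \frac{\nm{\xi(s)}e^{s/2}}{\sqrt{\gamma(s)}}\sqrt{\psi(s)}\dd s.
\]

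The final step is the Bihari-type inequality: dominating $\psi$ by the nondecreasing majorant $u$ defined by the right-hand side, one has $u' \leqslant \sqrt{2}\,\nm{\xi}e^{s/2}/\sqrt{\gamma}\,\sqrt{u}$, so $(\sqrt{u})' \leqslant \nm{\xi}e^{s/2}/(\sqrt{2\gamma})$. Integrating yields $\sqrt{\psi(t)} \leqslant \sqrt{\mathcal L_\lambda(0)} + R(t)/\sqrt{2}$, and squaring via $(a+b)^2 \leqslant 2a^2 + 2b^2$ produces $\psi(t) \leqslant 2\mathcal L_\lambda(0) + R^2(t)$. Dividing by $e^t$ returns \cref{eq:conv-Lt-regu}. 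The two most delicate points are the cancellation identity in Step 2 (where the $x_\lambda - x^*$ terms must vanish exactly to give a clean differential inequality) and the Bihari step, where the specific weight $e^{s/2}/\sqrt{\gamma(s)}$ emerges naturally from balancing the exponential weight $e^s$ with the bound $\nm{v_\lambda-x^*} \lesssim \sqrt{A/\gamma}$.
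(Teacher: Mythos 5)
Your proposal is correct and follows essentially the same route as the paper: the key differential inequality $\mathcal L_\lambda' + \mathcal L_\lambda \leqslant -\frac{\mu}{2}\|x_\lambda'\|^2 + \langle\xi, v_\lambda-x^*\rangle$ (which the paper imports from Lemma~3.2 of its reference \cite{luo_chen_differential_2019} but you rederive correctly, including the exact cancellation of the $\mu$-quadratic terms), followed by multiplication by $e^t$, integration, and a quadratic Gronwall argument on the perturbation term. The only difference is cosmetic: you run the Bihari step on the full quantity $e^tA(t)$ in one pass, whereas the paper applies its \cref{lem:Gronwall} to $\sqrt{\gamma}\,\|v_\lambda-x^*\|$ and then reinserts the resulting bound into $\Xi_\lambda$; both yield $2\mathcal L_\lambda(0)+R^2(t)$, and since your hand-rolled Bihari step is precisely the statement of \cref{lem:Gronwall}, you could cite it directly and thereby sidestep the division-by-$\sqrt{u}$ technicality when $\mathcal L_\lambda(0)=0$.
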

	\begin{proof}
		According to \cite[Lemma 3.2]{luo_chen_differential_2019}, it not hard to establish
		\[
		\frac{\dd }{\dd t}\mathcal L_\lambda  \leqslant 
		-\mathcal L_\lambda 	-\frac{\mu}{2}\nm{x_\lambda'}^2
		+\dual{\xi,v_\lambda-x^*},
		\]
		and \cref{eq:conv-Et-regu} follows immediately from this and the trivial identity $	\Xi_\lambda'= 
		\dual{\xi,v_\lambda-x^*}-
		\Xi_\lambda$.
		
		It remains to prove \cref{eq:conv-Lt-regu}.
		By \cref{eq:conv-Et-regu} we have
		\begin{equation}\label{eq:Lt-et}
			e^t		\mathcal E_\lambda(t)+\frac{\mu}{2}
			\int_{0}^{t}e^{s}\nm{x_\lambda'(s)}^2{\mathrm d}s
			\leqslant 
			\mathcal E_\lambda(0)
			=\mathcal L_\lambda(0),
		\end{equation}
		which yields that
		\[
		\begin{aligned}
			{}&\frac{\gamma}{2}\nm{v_\lambda-x^*}^2
			\leqslant \mathcal L_\lambda = 
			\mathcal E_\lambda+\Xi_\lambda
			\leqslant {}\mathcal L_\lambda(0)e^{-t}
			+\int_{0}^{t}e^{s-t}
			\dual{\xi(s),v_\lambda(s)-x^*}
			{\mathrm d}s.
		\end{aligned}
		\]
		Thanks to \cref{lem:Gronwall}, we obtain 
		\[
		\sqrt{\gamma(t)}\nm{v_\lambda(t)-x^*}
		\leqslant
		e^{-t/2}
		\left(	\sqrt{ 2\mathcal L_\lambda(0)}+
		R(t)\right),
		\]
		Based on this, $\Xi_\lambda$ can be estimated as follows
		\[
		\begin{aligned}
			\Xi_\lambda(t) = {}&
			e^{-t}	\int_{0}^{t}e^{s}
			\dual{\xi(s),v_\lambda(s)-x^*}
			{\mathrm d}s\\
			\leqslant {}&
			e^{-t}
			\int_{0}^{t}\frac{e^{s/2}}{\sqrt{\gamma(s)}}
			\nm{\xi(s)}\cdot 
			e^{s/2}\sqrt{\gamma(s)}
			\nm{v_\lambda(s)-x^*}
			{\mathrm d}s\\
			\leqslant {}&
			e^{-t}
			\int_{0}^{t}\frac{e^{s/2}}{\sqrt{\gamma(s)}}
			\nm{\xi(s)}
			\left(
			\sqrt{ 2\mathcal L_\lambda(0)}+R(s)
			\right){\mathrm d}s\\
			={}&
			e^{-t}\left(
			\sqrt{ 2\mathcal L_\lambda(0)}R(t)+\frac{1}{2}R^2(t)\right),
		\end{aligned}
		\]
		which together with \cref{eq:Lt-et} gives
		\[
		\begin{split}	
			\mathcal L_\lambda(t)
			+\frac{\mu}{2}
			\int_{0}^{t}e^{s-t}\nm{x_\lambda'(s)}^2{\mathrm d}s
			\leqslant {}&
			e^{-t}
			\left(
			\mathcal L_\lambda(0)+
			\sqrt{ 2\mathcal L_\lambda(0)}R(t)+
			\frac{1}{2}R^2(t)
			\right)\\
			\leqslant {}&
			e^{-t}
			\left(
			2	\mathcal L_\lambda(0)+R^2(t)
			\right).
		\end{split}
		\]
		This proves \cref{eq:conv-Lt-regu} and completes the proof of this lemma.
	\end{proof}
	
	\begin{thm}\label{thm:conv-f-mu-0}
		If $f\in\mathcal S_0^0$ and $\xi$ satisfies
		\begin{equation}\label{eq:decay-cond-xi-mu-0}
			e^{-t/2}\int_{0}^{t}e^{s}\nm{\xi(s)}{\rm d}s\overset{t\to\infty}{\longrightarrow} 0,
		\end{equation}
		then the accelerated differential inclusion \cref{eq:ADI}
		admits an energy-conserving solution $x:[0,\infty)\to \mathcal H$ satisfying 
		\begin{equation}\label{eq:conv-f-mu-0}
			f(x(t))-f(x^*)
			\leqslant 2	\mathcal L_0e^{-t}
			+\frac{e^{-t}}{\gamma_0}\left(\int_{0}^{t}e^{s}\nm{\xi(s)}{\rm d}s\right)^2,
		\end{equation}		
		for almost all $t>0$, where $	\mathcal L_0 := f(x_0)-f(x^*)
		+\frac{\gamma_0}{2}\nm{x_0+x_1-x^*}^2$.
	\end{thm}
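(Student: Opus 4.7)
The plan is to obtain the estimate by combining the existence result from \cref{thm:regu-x} with the Lyapunov-type bound \cref{eq:conv-Lt-regu} from \cref{lem:conv-Et-regu} applied to the regularized solutions $x_\lambda$, and then pass to the limit $\lambda\to 0$. Since we are in the purely convex case $\mu=0$, specialize \cref{eq:gamma} to obtain the explicit formula $\gamma(t)=\gamma_0 e^{-t}$; substituting this into the definition of $R(t)$ in \cref{lem:conv-Et-regu} collapses the weight to $e^{s/2}/\sqrt{\gamma(s)}=e^{s}/\sqrt{\gamma_0}$, so
\[
R^2(t)=\frac{1}{\gamma_0}\left(\int_{0}^{t}e^{s}\nm{\xi(s)}\,{\rm d}s\right)^{2},
\]
which is exactly the quantity appearing in \cref{eq:conv-f-mu-0}.

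Next, apply \cref{eq:conv-Lt-regu} with $\mu=0$; the kinetic-energy integral on the left vanishes and one is left with $\mathcal L_\lambda(t)\leqslant e^{-t}(2\mathcal L_\lambda(0)+R^2(t))$. Since $F_\lambda=f_\lambda$ when $\mu=0$, the minimum of $F_\lambda$ coincides with that of $f$ (as noted in \cref{sec:MY}), so $F_\lambda(x_\lambda(t))-F_\lambda(x^*)=f_\lambda(x_\lambda(t))-f(x^*)$. Dropping the nonnegative quadratic term $\frac{\gamma(t)}{2}\nm{v_\lambda(t)-x^*}^2$ from $\mathcal L_\lambda(t)$ yields
\[
f_\lambda(x_\lambda(t))-f(x^*)\leqslant e^{-t}\left(2\mathcal L_\lambda(0)+R^2(t)\right).
\]
The initial Lyapunov value satisfies $\mathcal L_\lambda(0)=f_\lambda(x_0)-f(x^*)+\frac{\gamma_0}{2}\nm{x_0+x_1-x^*}^2\leqslant \mathcal L_0$ because $f_\lambda(x_0)\leqslant f(x_0)$.

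Finally, I pass to the limit $\lambda\to 0$. From the proof of \cref{thm:regu-x}, in particular \cref{eq:limit}, the subsequence realizing the existence satisfies $F_\lambda(x_\lambda(t))\to f(x(t))$ for almost all $t>0$; since $\mathcal L_0$ and $R(t)$ do not depend on $\lambda$, the inequality survives the limit and produces \cref{eq:conv-f-mu-0}. The decay hypothesis \cref{eq:decay-cond-xi-mu-0} plays no role in the derivation of the inequality itself; it is recorded in the statement merely because it is what makes the right-hand side tend to zero, so that \cref{eq:conv-f-mu-0} truly delivers the minimizing property $f(x(t))\to f(x^*)$.

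The only subtle point I anticipate is the almost-everywhere qualification: the pointwise convergence $F_\lambda(x_\lambda(t))\to f(x(t))$ was established only a.e.~in \cref{thm:regu-x}, which is why the resulting estimate \cref{eq:conv-f-mu-0} is stated for almost all $t>0$ rather than pointwise. Otherwise the argument is a direct specialization of \cref{lem:conv-Et-regu} to $\mu=0$, followed by a routine lower-semicontinuity-free limit since the left-hand side converges pointwise a.e.
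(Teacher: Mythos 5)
Your argument is correct and follows essentially the same route as the paper's own proof: bound $\mathcal L_\lambda(0)$ by $\mathcal L_0$ using $f_\lambda(x_0)\leqslant f(x_0)$, specialize $\gamma(t)=\gamma_0e^{-t}$ in \cref{eq:conv-Lt-regu}, and pass to the limit $\lambda\to0$ via \cref{eq:limit}. Your added remarks (that \cref{eq:decay-cond-xi-mu-0} is not needed for the inequality itself, and that the a.e.\ qualification comes from \cref{eq:limit}) are accurate observations consistent with the paper.
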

	\begin{proof}
		By \cref{sec:MY}, we have
		\[
		F_\lambda(x_0)-	
		F_\lambda(x^*) = (1+\lambda\mu)(	f_\lambda(x_0)-	
		f_\lambda(x^*) )
		\leqslant (1+\lambda\mu)
		(f(x_0)-f(x^*)),
		\]
		and thus $\mathcal L_\lambda(0)\leqslant (1+\lambda\mu)
		\mathcal L_0$, which together with \cref{eq:conv-Lt-regu} implies 
		\begin{equation}\label{eq:bd-L}
			\mathcal L_\lambda(t)
			\leqslant 
			e^{-t}
			\left(
			2(1+\lambda\mu)	\mathcal L_0+\left(\int_{0}^{t}
			\frac{	e^{s/2}}{\sqrt{\gamma(s)}}
			\nm{\xi(s)}
			\,{\mathrm d}s\right)^2
			\right).
		\end{equation}
		Since $\mu = 0$, by \cref{eq:gamma}, we have $\gamma(t) = \gamma_0 e^{-t}$.
		By the fact \cref{eq:limit}, taking the limit $\lambda\to0$ yields  that
		\[
		f(x(t))-f(x^*)
		+\frac{\gamma(t)}{2}
		\nm{x(t)+x'(t)-x^*}^2
		\leqslant 
		2e^{-t}\mathcal L_0
		+\frac{e^{-t}}{\gamma_0}
		\left(\int_{0}^{t}
		e^{s}
		\nm{\xi(s)}{\mathrm d}s\right)^2,
		\]
		for almost all $t>0$. 
		This implies \cref{eq:conv-f-mu-0} 
		and completes the proof of \cref{thm:conv-f-mu-0}.
	\end{proof}
	\begin{rem}\label{rem:rate-mu-0}
		If $\xi\in L^1_{e^t}(0,\infty;\mathcal H)$, then \cref{eq:decay-cond-xi-mu-0} is satisfied and from \cref{eq:conv-f-mu-0} we obtain the exponential decay
		\[
		f(x(t))-f(x^*)
		\leqslant e^{-t}\left(2	\mathcal L_0
		+\frac{1}{\gamma_0}\nm{\xi}^2_{L^1_{e^t}(0,\infty;\mathcal H)}\right),
		\quad \text{ for almost all } t>0.
		\]
		On the other hand, if $\xi(t)\leqslant e^{-t/2}(t+1)^{-p}$ with $p>0$, then by \cref{lem:est-It},
		\[
		\int_{0}^{t}e^{s}\nm{\xi(s)}{\rm d}s \leqslant 
		\int_{0}^{t}\frac{e^{s/2}}{(t+1)^p}{\rm d}s
		\leqslant \frac{C_pe^{t/2}}{(t+1)^p}.
		\]
		Hence \cref{eq:decay-cond-xi-mu-0} is fulfilled and 
		taking the above estimate into \cref{eq:conv-f-mu-0} yields 
		\[
		f(x(t))-f(x^*)
		\leqslant 
		2\mathcal L_0e^{-t}
		+\frac{ C_p/\gamma_0}{(t+1)^{2p}}.
		\]
	\end{rem}
	\begin{thm}\label{thm:conv-Lt-mu>0}
		If $f\in\mathcal S_\mu^0$ with $\mu>0$ and $\xi$ satisfies
		\begin{equation}\label{eq:decay-cond-xi-mu>0}
			e^{-t/2}\int_{0}^{t}e^{s/2}\nm{\xi(s)}{\rm d}s\overset{t\to\infty}{\longrightarrow} 0,
		\end{equation}
		then the accelerated differential inclusion \cref{eq:ADI}
		admits an energy-conserving solution $x:[0,\infty)\to \mathcal H$ 
		satisfying
		\begin{equation}\label{eq:conv-Lt-mu>0}
			f(x(t))-f(x^*)
			+\frac{\gamma(t)}{2}
			\nm{x(t)+x'(t)-x^*}^2
			\leqslant 2	\mathcal L_0e^{-t}+	
			\frac{e^{-t}}{\gamma_{\min}}\left(\int_{0}^{t}e^{s/2}\nm{\xi(s)}{\rm d}s\right)^2,
		\end{equation}		
		for almost all $t>0$, where $	\mathcal L_0 = f(x_0)-f(x^*)
		+\frac{\gamma_0}{2}\nm{x_0+x_1-x^*}^2$ and $\gamma_{\min} = \min\{\mu,\gamma_0\}>0$.
	\end{thm}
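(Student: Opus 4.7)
The plan is to follow the same regularize-then-pass-to-limit strategy used in \cref{thm:conv-f-mu-0}, with the only genuine change being how the weighted integral $R(t)$ is bounded when $\mu>0$. Existence of an energy-conserving solution has already been supplied by \cref{thm:regu-x}, so the content of this theorem is the decay estimate.

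First I would invoke \cref{lem:conv-Et-regu} applied to the regularized solution $x_\lambda$ of \cref{eq:regu-ode}, which gives
\[
\mathcal L_\lambda(t)+\frac{\mu}{2}\int_{0}^{t}e^{s-t}\nm{x_\lambda'(s)}^2\dd s\leqslant e^{-t}\bigl(2\mathcal L_\lambda(0)+R^2(t)\bigr).
\]
The essential new observation is that, unlike the case $\mu=0$ where $\gamma(t)=\gamma_0 e^{-t}$ decays exponentially, for $\mu>0$ the scaling factor $\gamma(t)=\mu+(\gamma_0-\mu)e^{-t}$ is bounded below by $\gamma_{\min}=\min\{\mu,\gamma_0\}>0$ uniformly in $t$. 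Therefore $R(t)\leqslant \gamma_{\min}^{-1/2}\int_0^t e^{s/2}\nm{\xi(s)}\dd s$, which yields the $\lambda$-independent bound
\[
\mathcal L_\lambda(t)\leqslant e^{-t}\left(2\mathcal L_\lambda(0)+\frac{1}{\gamma_{\min}}\left(\int_{0}^{t}e^{s/2}\nm{\xi(s)}\dd s\right)^2\right).
\]

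Next I would use the Moreau--Yosida identity recorded in \cref{sec:MY} to control the initial term by $\mathcal L_\lambda(0)\leqslant (1+\lambda\mu)\mathcal L_0$, and then pass to the limit $\lambda\to 0$. The convergences $F_\lambda(x_\lambda(t))\to f(x(t))$ and $x'_\lambda(t)\to x'(t)$ for almost every $t$ were extracted in the proof of \cref{thm:regu-x} (see \cref{eq:limit}), so the left-hand side converges a.e.\ to $f(x(t))-f(x^*)+\tfrac{\gamma(t)}{2}\nm{x(t)+x'(t)-x^*}^2$, while the right-hand side converges to the stated bound because $(1+\lambda\mu)\to 1$ as $\lambda\to 0$.

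The decay hypothesis \cref{eq:decay-cond-xi-mu>0} is not actually needed to derive the pointwise bound \cref{eq:conv-Lt-mu>0}; it enters only to guarantee that the right-hand side tends to zero, which is the qualitative minimizing statement implicit in the theorem. The only delicate point is justifying the almost-everywhere passage to the limit uniformly across the subsequence extracted in \cref{thm:regu-x}, but this is handled exactly as in the proof of \cref{thm:conv-f-mu-0}, so I anticipate no new technical obstacle beyond bookkeeping the $\gamma_{\min}$ factor that distinguishes the strongly convex regime.
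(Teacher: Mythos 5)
Your proposal is correct and follows essentially the same route as the paper: reuse the bound \cref{eq:bd-L} established in the proof of \cref{thm:conv-f-mu-0}, replace $\sqrt{\gamma(s)}$ by $\sqrt{\gamma_{\min}}$ using $\gamma(t)\geqslant\gamma_{\min}>0$ when $\mu>0$, and pass to the limit $\lambda\to0$ via \cref{eq:limit}. If anything, your write-up is cleaner, since the paper's printed proof accidentally carries the specific decay $\nm{\xi(s)}\leqslant(s+1)^{-p}$ from \cref{rem:rate-mu>0} into the argument for the general statement, whereas you correctly observe that \cref{eq:conv-Lt-mu>0} needs no decay assumption on $\xi$ at all.
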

	\begin{proof}
		As $\mu>0$, by the fact $\gamma(t)\geqslant \gamma_{\min}$ and \cref{lem:est-It}, we have
		\[
		\begin{aligned}
			{}&
			\int_{0}^{t}
			\frac{e^{s/2}}{\sqrt{\gamma(s)}}
			\nm{\xi(s)}{\mathrm d}s\leqslant 
			\frac{1}{\sqrt{\gamma_{\min}}}
			\int_{0}^{t}\frac{e^{s/2}}{(s+1)^p}\,{\mathrm d}s
			\leqslant{} 
			\frac{ C_p}{\sqrt{\gamma_{\min}}}
			\cdot
			\frac{ e^{t/2}}{(t+1)^p},
		\end{aligned}
		\]
		and thus, by \cref{eq:limit,eq:bd-L}, taking the limit $\lambda\to0$ gives
		\[
		f(x(t))-f(x^*)
		+\frac{\gamma(t)}{2}
		\nm{x(t)+x'(t)-x^*}^2
		\leqslant 
		2\mathcal L_0e^{-t}
		+\frac{ C_p/\gamma_{\min}}{(t+1)^{2p}},
		\]
		for almost all $t>0$, which proves \cref{eq:conv-Lt-mu>0} and 
		concludes the proof of \cref{thm:conv-Lt-mu>0}.
	\end{proof}
	\begin{rem}\label{rem:rate-mu>0}
		If $\xi\in L^1_{e^{t/2}}(0,\infty;\mathcal H)$, then \cref{eq:decay-cond-xi-mu>0} holds true and by \cref{eq:conv-Lt-mu>0} we have the exponential decay
		\[
		f(x(t))-f(x^*)
		+\frac{\gamma(t)}{2}
		\nm{x(t)+x'(t)-x^*}^2
		\leqslant e^{-t}\left(2	\mathcal L_0
		+\frac{1}{\gamma_{\min}}\nm{\xi}^2_{L^1_{e^{t/2}}(0,\infty;\mathcal H)}\right),
		\]
		for almost all $t>0$. Besides, if $\xi(t)\leqslant (t+1)^{-p}$ with $p>0$, 
		then invoking \cref{lem:est-It} gives
		\[
		\int_{0}^{t}e^{s/2}\nm{\xi(s)}{\rm d}s \leqslant 
		\int_{0}^{t}\frac{e^{s/2}}{(t+1)^p}{\rm d}s
		\leqslant \frac{C_pe^{t/2}}{(t+1)^p}.
		\]
		This promises \cref{eq:decay-cond-xi-mu>0} and by \cref{eq:conv-Lt-mu>0} we find that
		\begin{equation}\label{eq:rate-mu>0}
			f(x(t))-f(x^*)
			+\frac{\gamma(t)}{2}
			\nm{x(t)+x'(t)-x^*}^2
			\leqslant 
			2\mathcal L_0e^{-t}
			+\frac{ C_p/\gamma_{\min}}{(t+1)^{2p}}.
		\end{equation}
	\end{rem}
	\section{An Inexact Accelerated PPA}
	\label{sec:AIPPA}
	
	In this section, we are mainly interested in the case 
	$f\in\mathcal S_\mu^0$ with $\mu\geqslant 0$. Rewrite 
	\cref{eq:ADI} as a first-order differential inclusion system
	\begin{subnumcases}{}
		\label{eq:ADI-x}
		{~\,}x' = v-x,\\
		\gamma v' \in \mu(x-v)-\partial f(x)+\xi,
		\label{eq:ADI-v}
	\end{subnumcases}
	and consider the implicit Euler scheme
	\begin{subnumcases}{}
		\label{eq:im-ADI-xk1}
		\frac{x_{k+1}-x_k}{\alpha_k} = v_{k+1}-x_{k+1},\\
		\gamma_k 	\frac{v_{k+1}-v_k}{\alpha_k} \in 
		\mu(x_{k+1}-v_{k+1})
		-\partial f(x_{k+1})+\xi_{k},
		\label{eq:im-ADI-vk1}
	\end{subnumcases}
	where the equation \cref{eq:gamma} of the scaling factor 
	$\gamma$ is also discretized implicitly
	\begin{equation}\label{eq:gammak}
		\frac{\gamma_{k+1}-\gamma_k}{\alpha_k} 
		= \mu-\gamma_{k+1},\quad\gamma_0>0.
	\end{equation}
	After some
	manipulations, we rewrite \cref{eq:im-ADI-xk1} as follows
	\begin{subnumcases}{}
		x_{k+1} = \proxi_{\lambda_k f}(w_k+\lambda_k\xi_{k}),\,\,\lambda_k = {}\alpha_k^2/\eta_k,
		\label{eq:im-ADI-equi-xk1}\\
		\label{eq:im-ADI-equi-vk1}
		v_{k+1}=x_{k+1}+	\frac{x_{k+1}-x_k}{\alpha_k},
	\end{subnumcases}
	where 
	\begin{equation}\label{eq:lambdak-wk}
		w_k ={} \frac1{\eta_k}(\gamma_k\alpha_kv_k+(\gamma_k+\mu\alpha_k)x_k),\quad \eta_k=\gamma_k\alpha_k+\gamma_k+\mu\alpha_k.
	\end{equation}
	
	Note that \cref{eq:im-ADI-equi-xk1} gives a PPA with extrapolation. The additional term $\lambda_k\xi_{k}$ makes the proximal step \eqref{eq:im-ADI-equi-xk1} inexact and it is more general to consider 
	\begin{subnumcases}{}
		x_{k+1} \approx \proxi_{\lambda_k f}(w_k),
		\label{eq:AIPPA-xk1}\\
		\label{eq:AIPPA-vk1}
		v_{k+1}=x_{k+1}+	\frac{x_{k+1}-x_k}{\alpha_k}.
	\end{subnumcases}
	In the state of the art, we have many choices for \eqref{eq:AIPPA-xk1}; see the  inexact accelerated proximal point algorithms proposed in \cite{guler_new_1992,salzo_inexact_2012}. 
	In the forthcoming section, we shall list some typical approximations 
	to the proximal mapping $\proxi_{\lambda f}$.
	\subsection{Inexact proximal mapping}
	\label{sec:inexact-proxi}
	In \cite{salzo_inexact_2012}, Salzo and Villa summarized three 
	types of approximations from \cite{alber_proximal_1997,auslender_numerical_1987,rockafellar_monotone_1976}.
	The first two of them involve the concept of $\varepsilon$-$subdi\!f\!\!f\!erential$:
	\begin{equation*}
		\partial f(x,\varepsilon):=
		\left\{p\in \mathcal H:f(y)\geqslant f(x)+\dual{p,x-y}-\varepsilon\quad\forall\,y\in \mathcal H  \right\}.
	\end{equation*}
	By definition, it is clear that $0\in\partial f(x,\varepsilon)$ iff $f(x)\leqslant \varepsilon+\inf_{y\in \mathcal H}f(y)$.
	\begin{Def}[\cite{alber_proximal_1997}]
		\label{def:type1}
		Given $\varepsilon,\,\lambda>0$ and $x\in \mathcal H$, we call $w\in \mathcal H$ a 
		$type$-1 approximation of $\proxi_{\lambda f}(x)$ 
		with $\varepsilon$-precision and write
		$w=_{1,\varepsilon}\!\proxi_{\lambda f}(x)$, if and 
		only if $	0\in \partial \phi_\lambda(w,\varepsilon^2/(2\lambda))$,
		where $\phi_\lambda(\cdot) = f(\cdot)+\frac{1}{2\lambda}\nm{\cdot-x}^2$. Equivalently, $w=_{1,\varepsilon}\!\proxi_{\lambda f}(x)$ if and only if 
		\begin{equation*}
			f(w)+\frac{1}{2\lambda}\nm{w-x}^2
			\leqslant 
			\frac{\varepsilon^2}{2\lambda}+f_\lambda (x).
		\end{equation*}
	\end{Def}
	\begin{Def}[\cite{auslender_numerical_1987}]
		\label{def:type2}
		Given $\varepsilon,\,\lambda>0$ and $x\in \mathcal H$, we call $w\in \mathcal H$ a $type$-2 approximation of $\proxi_{\lambda f}(x)$ 
		with $\varepsilon$-precision and write
		$w=_{2,\varepsilon}\!\proxi_{\lambda f}(x)$, if and only if 
		\begin{equation*}
			\frac{x-w}{\lambda} \in \partial f(w,\varepsilon^2/(2\lambda)).
		\end{equation*}
	\end{Def}
	\begin{Def}[\cite{rockafellar_monotone_1976}]
		\label{def:type3}
		Given $\varepsilon,\,\lambda>0$ and $x\in \mathcal H$, we call $w\in \mathcal H$ a 
		$type$-3 approximation of $\proxi_{\lambda f}(x)$ 
		with $\varepsilon$-precision and write
		$w=_{3,\varepsilon}\!\proxi_{\lambda f}(x)$, iff $	{\rm dist}\left(0,\partial \phi_\lambda(w)\right)\leqslant \frac{\varepsilon}{\lambda}$,
		where  $		{\rm dist}\left(0,\partial \phi_\lambda(w)\right) := 
		\inf\left\{
		\nm{p}:p\in \partial \phi_\lambda(w)
		\right\}$.
	\end{Def}
	For practical approximations, we refer to \cite{he_accelerated_2012}.
	Some variant of $type$-2 approximation can be found in \cite{monteiro_convergence_2010}. The following result, 
	coming from \cite[Proposition 1]{salzo_inexact_2012},
	compares those three kinds of approximations. For more implications under 
	further assumption, e.g., boundness of $\dom f$, see \cite[Proposition 2.4]{villa_accelerated_2013}.
	\begin{prop}[\cite{salzo_inexact_2012}]
		\label{prop:equiv}
		The following implications are true:
		\begin{enumerate}
			\item $w=_{2,\varepsilon}\!\proxi_{\lambda f}(x)\,\Longrightarrow
			\,w=_{1,\varepsilon}\!\proxi_{\lambda f}(x)$;
			\item $w=_{3,\varepsilon}\!\proxi_{\lambda f}(x)\,\Longrightarrow
			\,w=_{1,\varepsilon}\!\proxi_{\lambda f}(x)$;
			\item $w=_{3,\varepsilon}\!\proxi_{\lambda f}(x)\,\Longleftrightarrow
			\,\!w=\proxi_{\lambda f}(x+ e)$ with some $e\in \mathcal H$ such that 
			$\nm{e}\leqslant \varepsilon$.
		\end{enumerate}
	\end{prop}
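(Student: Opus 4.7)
The plan is to handle the three implications separately, using in each case the reformulation of the type-1 condition as a single functional inequality. Setting $\phi_\lambda(\cdot) = f(\cdot) + \frac{1}{2\lambda}\nm{\cdot - x}^2$ and $p^* := \proxi_{\lambda f}(x)$, one has $f_\lambda(x) = \phi_\lambda(p^*) = \min \phi_\lambda$, so Definition~\ref{def:type1} is equivalent to $\phi_\lambda(w) - \phi_\lambda(p^*) \leqslant \varepsilon^2/(2\lambda)$. Both (1) and (2) will be proved by bounding this gap.

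For (1), I would use the $\varepsilon$-subgradient inequality at $y = p^*$ supplied by $(x-w)/\lambda \in \partial f(w, \varepsilon^2/(2\lambda))$, namely
\[
f(p^*) \geqslant f(w) + \frac{1}{\lambda}\dual{x-w,\,p^* - w} - \frac{\varepsilon^2}{2\lambda},
\]
and combine it with the algebraic identity
\[
\frac{1}{2\lambda}\nm{w-x}^2 - \frac{1}{2\lambda}\nm{p^*-x}^2 = \frac{1}{\lambda}\dual{w - p^*,\, w - x} - \frac{1}{2\lambda}\nm{w - p^*}^2.
\]
Adding these two lines, the cross terms involving $\dual{x-w, p^* - w}$ cancel, and one is left with $\phi_\lambda(w) - \phi_\lambda(p^*) \leqslant -\frac{1}{2\lambda}\nm{w-p^*}^2 + \varepsilon^2/(2\lambda) \leqslant \varepsilon^2/(2\lambda)$, which is precisely the type-1 condition.

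For (2), the starting point is that $\phi_\lambda$ is $1/\lambda$-strongly convex (regardless of the strong convexity parameter of $f$). Picking any $p \in \partial\phi_\lambda(w)$ with $\nm{p}\leqslant \varepsilon/\lambda$ furnished by Definition~\ref{def:type3}, the strong convexity inequality at $p^*$ combined with $\phi_\lambda(p^*) \leqslant \phi_\lambda(w)$ gives
\[
\phi_\lambda(w) - \phi_\lambda(p^*) \leqslant -\dual{p,\,p^* - w} - \frac{1}{2\lambda}\nm{p^*-w}^2 \leqslant \nm{p}\nm{p^*-w} - \frac{1}{2\lambda}\nm{p^*-w}^2,
\]
and maximizing the right-hand side over $\nm{p^*-w}$ (a one-variable quadratic) yields the bound $\lambda\nm{p}^2/2 \leqslant \varepsilon^2/(2\lambda)$, which is again type-1. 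Implication (3) is essentially a rewriting: the calculus identity $\partial \phi_\lambda(w) = \partial f(w) + (w-x)/\lambda$ says $p \in \partial \phi_\lambda(w)$ iff $(x+\lambda p - w)/\lambda \in \partial f(w)$, i.e., iff $w = \proxi_{\lambda f}(x + \lambda p)$; setting $e := \lambda p$ translates the bound $\nm{p}\leqslant \varepsilon/\lambda$ into $\nm{e}\leqslant \varepsilon$, giving both directions at once.

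I expect the routine algebra in (1) to be the main bookkeeping obstacle: one must keep the sign conventions of the $\varepsilon$-subdifferential consistent with the definition in the text and verify that the cross terms really do cancel before the negative squared term is discarded. The other two implications are almost immediate once the type-1 condition is recast as a bound on $\phi_\lambda(w) - \min \phi_\lambda$ and the subdifferential sum rule is invoked.
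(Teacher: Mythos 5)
The paper itself offers no proof of this proposition --- it is imported verbatim as \cite[Proposition 1]{salzo_inexact_2012} --- so there is no in-text argument to compare against; what you supply is a correct, self-contained proof along the standard lines of Salzo--Villa. Your reduction of the type-1 condition to the gap bound $\phi_\lambda(w)-\phi_\lambda(p^*)\leqslant \varepsilon^2/(2\lambda)$ is exactly right (using \cref{eq:f_lambda}, $\phi_\lambda(p^*)=f_\lambda(x)$), the algebraic identity in (1) checks out, the strong-convexity argument in (2) is clean (the auxiliary remark $\phi_\lambda(p^*)\leqslant\phi_\lambda(w)$ is not actually needed), and (3) is indeed just the subdifferential sum rule plus the optimality characterization \cref{eq:sub-inf} of the proximal map. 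Two small points worth making explicit. First, the concern you raise about sign conventions is real: the paper's displayed definition of $\partial f(x,\varepsilon)$ reads $f(y)\geqslant f(x)+\dual{p,x-y}-\varepsilon$, and with that literal convention the cross terms in your proof of (1) would \emph{not} cancel; your argument silently corrects this to the standard $\dual{p,y-x}$, which is what Salzo--Villa use and what the rest of the paper (e.g.\ \cref{lem:key-type1}) implicitly assumes, so you should state that you are using the corrected convention. Second, in (2) and (3) you pass from ${\rm dist}(0,\partial\phi_\lambda(w))\leqslant\varepsilon/\lambda$ to the existence of an actual $p\in\partial\phi_\lambda(w)$ with $\nm{p}\leqslant\varepsilon/\lambda$; this is justified because $\partial\phi_\lambda(w)$ is closed, convex and nonempty whenever the distance is finite, so the infimum is attained --- a one-line remark that closes the only gap between ``$\inf\leqslant$'' and ``there exists''.
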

	Therefore, both $type$-2 and $type$-3 approximations can be reduced 
	to $type$-1 approximation and the scheme \cref{eq:im-ADI-equi-xk1}, as well as the implicit Euler discretization \cref{eq:im-ADI-xk1}, considers $type$-3 approximation. However, with same magnitude error $\varepsilon$, 
	the corresponding PPA using $type$-2 and $type$-3 approximations has faster convergence rate. In other words, the decay assumption on $\varepsilon$ of $type$-2 and $type$-3 approximations is weaker than that of $type$-1 approximation.
	
	Indeed, for convex $f$, G\"{u}ler \cite{guler_new_1992} proposed an inexact accelerated PPA involving $type$-3 approximation: $x_{k+1}=_{3,\varepsilon_k}\!\proxi_{\lambda_k f}(w_k)$; see \cite[Section 2]{guler_new_1992}. The rate $O(1/k^2+k^{1-2p})$ has been established (cf. \cite[Theorem 3.3]{guler_new_1992}) with 
	\begin{equation}\label{eq:assum}
		\lambda_k=\lambda>0 \quad\text{and}\quad\varepsilon_{k} = C(k+1)^{-p}.
	\end{equation}
	Salzo and Villa \cite{salzo_inexact_2012} presented an inexact accelerated
	PPA using $type$-2 approximation: $x_{k+1}=_{2,\varepsilon_k}\!\!\proxi_{\lambda_k f}(w_k)$ and they 
	obtained the rate $O(1/k^2+k^{1-2p})$ as well, under assumption \cref{eq:assum}. Besides, they considered an inexact PPA that adopts $type$-1 approximation: $x_{k+1}=_{1,\varepsilon_k}\!\proxi_{\lambda_k f}(w_k)$. However, with the same condition \cref{eq:assum}, they only proved the rate $O(1/k+k^{3-2p})$  (see \cite[Theorem 4]{salzo_inexact_2012}).
	
	In the next section, we shall apply $type$-1 approximation to our inexact accelerated PPA \cref{eq:AIPPA-xk1}. Under assumption \cref{eq:assum}, for convex case ($\mu=0$), we derive the convergence rate $O(1/k^2+k^{2-2p})$ (cf. \cref{thm:conv-AIPPA-mu-0})
	which improves the result in \cite[Theorem 4]{salzo_inexact_2012}, and for strongly 
	convex case ($\mu>0$), we take constant 
	step size  $\alpha_k=\alpha>0$ and obtain new convergence rate estimate (cf. \cref{thm:conv-AIPPA-mu>0}).
	
	To the end, we list a key lemma, which is important for our 
	convergence rate analysis; see \cite[Lemma 2.7]{Lin2020} for a detailed proof.
	\begin{lem}[\cite{Lin2020}]
		\label{lem:key-type1}
		Assume $f\in\mathcal S_{\mu}^0$ with $\mu\geqslant 0$ 
		and let $\varepsilon,\,\lambda>0$ and $x\in \mathcal H$ be given.
		Then for $w=_{1,\varepsilon}\!\proxi_{\lambda f}(x)$, it holds that
		\begin{equation*}
			\begin{split}
				\frac{\varepsilon^2}{2\lambda}
				+f(y)
				\geqslant {}&f(w)+
				\frac{1}{\lambda}
				\dual{\sigma+w-x,w-y}
				+\frac{\mu}{2}\nm{w-y}^2+
				\frac{1+\lambda\mu}{2\lambda}\nm{w-\proxi_{\lambda f}(x)}^2,
			\end{split}
		\end{equation*}
		for all $y\in \mathcal H$, where $\sigma=(1+\lambda\mu)(\proxi_{\lambda f}(x)-w)$. 
		Particularly, taking $y=w$ gives
		\[
		\nm{w-\proxi_{\lambda f}(x)}\leqslant   \frac{\varepsilon}{\sqrt{1+\lambda\mu}}.
		\]
	\end{lem}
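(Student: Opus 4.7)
The plan is to combine two ingredients: an upper bound on $f(w)$ extracted from the $type$-1 condition, and a lower bound on $f(y)$ obtained from $\mu$-strong convexity applied at the exact proximal point $z:=\proxi_{\lambda f}(x)$. By \cref{def:type1} together with \cref{eq:f_lambda}, the $type$-1 assumption amounts to the single inequality
\[
f(w)+\frac{1}{2\lambda}\nm{w-x}^2\leqslant \frac{\varepsilon^2}{2\lambda}+f(z)+\frac{1}{2\lambda}\nm{z-x}^2,
\]
which is the only analytic content of the hypothesis I would use. Since $(x-z)/\lambda\in\partial f(z)$ by \cref{eq:sub-inf}, the $\mu$-strong convexity condition \cref{eq:mu} gives
\[
f(y)\geqslant f(z)+\frac{1}{\lambda}\dual{x-z,\,y-z}+\frac{\mu}{2}\nm{y-z}^2,\qquad y\in\mathcal H.
\]

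Next I would subtract the first display from the second (which cancels $f(z)$), producing a lower bound on $f(y)-f(w)$ involving the differences $\nm{z-x}^2-\nm{w-x}^2$ and $\nm{y-z}^2$. Two algebraic reductions then handle the quadratics: the identity $\nm{z-x}^2-\nm{w-x}^2=-\nm{w-z}^2+2\dual{w-z,\,x-z}$ absorbs the $x$-terms, and the expansion $\nm{y-z}^2=\nm{y-w}^2+2\dual{y-w,\,w-z}+\nm{w-z}^2$ converts $y-z$ into $y-w$. After collecting like terms, the linear part becomes $\frac{1}{\lambda}\dual{x-z+\lambda\mu(w-z),\,y-w}$, the quadratic part contributes $\frac{\mu}{2}\nm{y-w}^2+\frac{1+\lambda\mu}{2\lambda}\nm{w-z}^2$, and there remains a surplus $-\varepsilon^2/(2\lambda)$. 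With $\sigma=(1+\lambda\mu)(z-w)$ one verifies the identity
\[
x-z+\lambda\mu(w-z)=-(\sigma+w-x),
\]
so the linear piece coincides with $\frac{1}{\lambda}\dual{\sigma+w-x,\,w-y}$, matching the target inequality after rearrangement.

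The particular case $\nm{w-\proxi_{\lambda f}(x)}\leqslant \varepsilon/\sqrt{1+\lambda\mu}$ then drops out immediately by taking $y=w$ in the inequality just established: the inner product and the $\mu$-quadratic both vanish, leaving $\varepsilon^2/(2\lambda)\geqslant \frac{1+\lambda\mu}{2\lambda}\nm{w-z}^2$, which is the claim.

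The main obstacle is organisational rather than conceptual. We only have a one-sided (upper) estimate on $f(w)$ coming from the $type$-1 definition, yet $f(w)$ appears on the right of the target inequality; so the right pairing is this upper bound on $f(w)$ against a lower bound on $f(y)$ obtained from a subgradient at $z$ rather than at $w$ (the latter not being available for a merely inexact proximal iterate). Once this pairing is fixed, the remainder is a careful but routine accounting of the cross terms $\dual{w-z,\,x-z}$ and $\dual{y-w,\,w-z}$, engineered so that the residual coefficient of $\nm{w-z}^2$ is exactly $(1+\lambda\mu)/(2\lambda)$, which balances cleanly against the $\varepsilon^2/(2\lambda)$ surplus from the $type$-1 inequality.
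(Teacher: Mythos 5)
Your argument is correct and complete: pairing the type-1 inequality (via \cref{eq:f_lambda}) against the $\mu$-strong convexity inequality taken at the exact proximal point $z=\proxi_{\lambda f}(x)$ with subgradient $(x-z)/\lambda$ from \cref{eq:sub-inf}, and then reorganising the quadratics around $w$, yields exactly the stated bound, and the coefficient bookkeeping you describe (the $\|w-z\|^2$ terms summing to $(1+\lambda\mu)/(2\lambda)$ and the linear part matching $\sigma+w-x$) checks out. Note that the paper itself does not reproduce a proof of this lemma but defers entirely to \cite[Lemma 2.7]{Lin2020}; your derivation is the standard one and can stand as a self-contained substitute for that citation.
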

	\subsection{Main results and proofs}
	Given a nonnegative sequence $\{\varepsilon_k\}_{k = 0}^{\infty}$, 
	let us consider the scheme \cref{eq:AIPPA-xk1} with $type$-1 approximation:
	\begin{subnumcases}{}
		x_{k+1} =_{1,\varepsilon_k}\!\proxi_{\lambda_k f}(w_k)
		,\,\,\lambda_k = {}\alpha_k^2/\eta_k,
		\label{eq:AIPPA-xk1-type1}\\
		\label{eq:AIPPA-vk1-type1}
		v_{k+1}=x_{k+1}+	\frac{x_{k+1}-x_k}{\alpha_k},
	\end{subnumcases}
	with $\eta_k$ and $w_k$ being defined by \cref{eq:lambdak-wk}. 
	Recall that the equation \cref{eq:gamma} of the scaling factor 
	$\gamma$ is discretized implicitly by \cref{eq:gammak}.
	It is worth noticing that \cref{eq:AIPPA-xk1-type1} implies the identity
	\begin{equation}\label{eq:IAPA-difff-k1}
		\frac{v_{k+1}-v_{k}}{\alpha_k}={}
		\frac{\mu}{\gamma_k}  (x_{k+1}-v_{k+1})
		-\frac{1}{\gamma_k}
		\frac{w_k-x_{k+1}}{\lambda_k}.
	\end{equation}
	To verify this, let $(\widehat{x}_{k+1},\widehat{v}_{k+1})$ satisfy
	\[
	\left\{
	\begin{aligned}
		\frac{\widehat{x}_{k+1}-x_k}{\alpha_k} = {}&\widehat{v}_{k+1}-\widehat{x}_{k+1},\\
		\gamma_k 	\frac{\widehat{v}_{k+1}-v_k}{\alpha_k} \in {}&
		\mu(\widehat{x}_{k+1}-\widehat{v}_{k+1})
		-\partial f(\widehat{x}_{k+1}),
	\end{aligned}
	\right.
	\]
	then it is evident that $\widehat{x}_{k+1} ={}\proxi_{\lambda_k f}(w_k)$, where $\lambda_k$ and $w_k$ are the same as that in \cref{eq:AIPPA-vk1-type1}. Based on this and \cref{eq:sub-inf}, a direct manipulation verifies \cref{eq:IAPA-difff-k1}.
	
	Assume that 
	\begin{equation}\label{eq:decay-epsilonk}
		\varepsilon_k\leqslant \frac{1}{(k+1)^p},\quad p>0.
	\end{equation}
	Our main estimates are listed below.
	\begin{thm}
		\label{thm:conv-AIPPA-mu-0}
		Suppose $f\in\mathcal S_0^{0}$, i.e., $\mu=0$. If (i) $\gamma_0=4$, (ii) $\varepsilon_{k}$ obeys \cref{eq:decay-epsilonk} with $p>1$ and (iii) $\alpha_k^2=\gamma_k(1+\alpha_k)$, then for the scheme \cref{eq:AIPPA-xk1-type1}, we have $\lambda_k = 1$ and 
		\begin{equation}
			\label{eq:conv-AIPPA-mu-0}
			f(x_k)-f(x^*)\leqslant 
			\frac{2\mathcal L_0}{(k+1)^2} 
			+ C_p\left\{
			\begin{aligned}
				&\frac{1+\ln^2(k+1)}{(k+1)^{2}},&&p=2,\\
				&(k+1)^{-2}+(k+1)^{2-2p},&&p>1 \text{ and } p\neq 2,
			\end{aligned}
			\right.
		\end{equation}
		where $\mathcal L_0: = 
		f(x_{0})-f(x^*) + 
		\frac{\gamma_{0}}{2}
		\nm{v_{0}-x^*}^2$.
	\end{thm}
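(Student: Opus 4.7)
The plan is to mirror the continuous Lyapunov analysis of \cref{lem:conv-Et-regu}/\cref{thm:conv-f-mu-0} at the discrete level. First I would verify the parameter relations under $\mu=0$ and $\gamma_0=4$. From \cref{eq:gammak} we have $\gamma_{k+1}(1+\alpha_k)=\gamma_k$, and together with $\alpha_k^2=\gamma_k(1+\alpha_k)$ this forces $\eta_k=\gamma_k(1+\alpha_k)=\alpha_k^2$, hence $\lambda_k=\alpha_k^2/\eta_k=1$. A direct induction (using $\alpha_k=\sqrt{\gamma_{k+1}}(1+\alpha_k)$) then yields $\gamma_k=4/(k+2)^2$ and $\alpha_k\sim 2/k$. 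In particular, if we set $\Lambda_k:=\gamma_0/\gamma_k=(k+2)^2/4$, then $\Lambda_{k+1}/\Lambda_k=1+\alpha_k$, so $\Lambda_k$ plays the role of $e^t$ from the continuous analysis.

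Next I would introduce the discrete Lyapunov function $\mathcal{L}_k:=f(x_k)-f(x^*)+\tfrac{\gamma_k}{2}\|v_k-x^*\|^2$ and aim at a one-step recursion of the form $\Lambda_{k+1}\mathcal{L}_{k+1}\le \Lambda_k\mathcal{L}_k+E_k$, where $E_k$ collects the $\varepsilon_k$-errors. To this end, apply \cref{lem:key-type1} (with $\lambda_k=1$, $\mu=0$) twice: once with $y=x^*$, weighted by $\alpha_k$, and once with $y=x_k$, weighted by $1$. Adding these yields
\[
(1+\alpha_k)f(x_{k+1})-f(x_k)-\alpha_k f(x^*)\le \tfrac{(1+\alpha_k)\varepsilon_k^2}{2}+\langle w_k-x_{k+1}-\sigma_k,\,\alpha_k(x^*-x_{k+1})+(x_k-x_{k+1})\rangle,
\]
where $\|\sigma_k\|\le\varepsilon_k$. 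Using the explicit form of $w_k$ in \cref{eq:lambdak-wk} (which for $\mu=0$ reads $w_k=(\alpha_kv_k+x_k)/(1+\alpha_k)$) and the update \cref{eq:AIPPA-vk1-type1}, the inner product collapses into a difference of $\gamma$-weighted squared distances; this is exactly the discrete analog of $\frac{d}{dt}\mathcal{L}\le -\mathcal{L}+\langle\xi,v-x^*\rangle$ used in \cref{lem:conv-Et-regu}. After multiplication by $\Lambda_{k+1}$ and some algebra the cross term $\langle\sigma_k,\cdot\rangle$ is absorbed into $\|v_{k+1}-x^*\|$ by Young's inequality, yielding the clean recursion
\[
\Lambda_{k+1}\mathcal{L}_{k+1}\le \Lambda_k\mathcal{L}_k+\tfrac{\Lambda_{k+1}\varepsilon_k^2}{2}+c\sqrt{\Lambda_{k+1}/\gamma_{k+1}}\,\varepsilon_k\,\sqrt{\gamma_{k+1}}\|v_{k+1}-x^*\|,
\]
(up to constants absorbable in $c$), which is the discrete counterpart of the estimate $e^{-t}(2\mathcal{L}_0+R^2(t))$ in \cref{lem:conv-Et-regu}.

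Telescoping and applying the discrete Grönwall argument (in the style of the continuous proof of \cref{eq:conv-Lt-regu}, i.e. bounding $\sqrt{\gamma_k}\|v_k-x^*\|$ first, then plugging back into the cross-term) gives
\[
\mathcal{L}_k\le \tfrac{1}{\Lambda_k}\bigl(2\mathcal{L}_0+R_k^2\bigr),\qquad R_k:=\sum_{j=0}^{k-1}\sqrt{\tfrac{\Lambda_{j+1}}{\gamma_{j+1}}}\,\varepsilon_j.
\]
With $\sqrt{\Lambda_{j+1}/\gamma_{j+1}}\sim (j+2)^2/4$ and $\varepsilon_j\le (j+1)^{-p}$, the final step reduces to estimating the series $\sum_{j=0}^{k-1}(j+1)^{2-p}$, which gives $O(k^{3-p})$ for $p<3$ and the logarithmic boundary case at $p=3$. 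Squaring and dividing by $\Lambda_k\sim k^2$ produces the three-case bound in \cref{eq:conv-AIPPA-mu-0}: the $\ln^2(k+1)/(k+1)^2$ scaling arises exactly at the boundary $p=2$ (after the square-and-divide), and the $(k+1)^{2-2p}$ term emerges for $p>1,\,p\ne 2$.

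The main obstacle is step two: engineering the one-step recursion so that the inexactness terms from \cref{lem:key-type1} (namely $\tfrac{\varepsilon_k^2}{2\lambda_k}$ and the shift $\sigma_k$) combine with the implicit discretization in a way that preserves the $\Lambda_k$-weighted telescoping structure while keeping the accumulated error at the correct rate. In particular, the Young-type bound that converts $\langle\sigma_k,v_{k+1}-x^*\rangle$ into $\varepsilon_k\sqrt{\gamma_{k+1}}\|v_{k+1}-x^*\|$ must be tight enough to recover the improvement from $O(1/k+k^{3-2p})$ in \cite{salzo_inexact_2012} to $O(1/k^2+k^{2-2p})$ stated here; the summation case-analysis for $R_k^2$ is then routine.
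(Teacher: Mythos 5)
Your architecture coincides with the paper's: the paper proves \cref{lem:conv-Lk-AIPPA} by applying \cref{lem:key-type1} twice (once with $y=x_k$, once with $y=x^*$ weighted by $\alpha_k$), arrives at the one-step recursion $\mathcal L_{k+1}-\mathcal L_k\leqslant-\alpha_k\mathcal L_{k+1}+\frac{1+\alpha_k}{2\lambda_k}\varepsilon_k^2+\frac{\alpha_k}{\lambda_k}\nm{\xi_{k+1}}\nm{v_{k+1}-x^*}$, closes the cross term with the discrete Gronwall inequality (\cref{lem:Gronwall-discrete}) by first bounding $\sqrt{\gamma_k/\beta_k}\nm{v_k-x^*}$ and plugging back, and then specializes $\lambda_k=1$, $\beta_k=\gamma_k/\gamma_0$ and estimates the resulting series --- all exactly as you outline. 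One small inaccuracy: $\gamma_k=4/(k+2)^2$ is not an exact identity (it already fails at $k=0$, where $\gamma_0=4$); only the two-sided bounds $4/(2k+1)^2\leqslant\gamma_k\leqslant 4/(k+1)^2$ from \cref{lem:gammak-ak} hold, which is all that is needed.

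The substantive problem is the weight in your error-accumulation sum $R_k=\sum_j\sqrt{\Lambda_{j+1}/\gamma_{j+1}}\,\varepsilon_j$. Your own displayed recursion carries the cross term $c\sqrt{\Lambda_{k+1}}\,\varepsilon_k\nm{v_{k+1}-x^*}$ (which is correct, since $\Lambda_k\alpha_k=\gamma_0/\sqrt{\gamma_{k+1}}=\sqrt{\gamma_0\Lambda_{k+1}}$), but when the Gronwall lemma is run on the variable $a_j=\sqrt{\Lambda_j\gamma_j}\nm{v_j-x^*}$ --- the paper's $\sqrt{\gamma_j/\beta_j}\nm{v_j-x^*}$ --- the surviving coefficient is $\sqrt{\Lambda_{j+1}}\,\varepsilon_j=\sqrt{\gamma_0/\gamma_{j+1}}\,\varepsilon_j\sim(j+1)\varepsilon_j$, not $\sqrt{\Lambda_{j+1}/\gamma_{j+1}}\,\varepsilon_j\sim(j+1)^2\varepsilon_j$: you have lost the factor $\sqrt{\gamma_{j+1}}$ attached to $\nm{v_{j+1}-x^*}$. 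With your weight one gets $R_k=\sum(j+1)^{2-p}=O(k^{3-p})$, hence $R_k^2/\Lambda_k=O(k^{4-2p})$, which does not prove \cref{eq:conv-AIPPA-mu-0} and is inconsistent with your own closing claims (the logarithmic boundary of your series sits at $p=3$, not $p=2$). With the corrected weight, $R_k\sim\sum(j+1)^{1-p}$, which is $O(\ln k)$ at $p=2$ and $O(1+k^{2-p})$ otherwise; squaring and dividing by $\Lambda_k\sim k^2$ then yields precisely the stated three-case bound, and $R_k$ becomes the paper's $\Omega_k$. Once this bookkeeping is repaired, your proof is the paper's proof.
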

	\begin{rem}
		In the setting of \cref{thm:conv-AIPPA-mu-0} (except \cref{eq:decay-epsilonk}), if  $\{k\varepsilon_k\}_{k=1}^{\infty}$ is summable, then we have
		the rate $O(1/k^2)$ without log factor $\ln k$, which is promised by  \cref{eq:key-mu-0}.
	\end{rem}
	\begin{thm}
		\label{thm:conv-AIPPA-mu>0}
		Suppose $f\in\mathcal S_\mu^{0}$ with $\mu>0$. If (i) $\gamma_0 = \mu$, (ii) $\varepsilon_{k}$ 
		satisfies \cref{eq:decay-epsilonk} and we choose $\alpha_k=\alpha>0$, 
		then for the scheme \cref{eq:AIPPA-xk1-type1}, we have $\lambda_k= {}\alpha^2/(\mu+2\mu\alpha)$ and 
		\begin{equation}\label{eq:conv-AIPPA-mu>0}
			f(x_k)-f(x^*)+\frac{\mu}{2}\nm{v_k-x^*}^2
			\leqslant 
			\frac{2\mathcal L_0}{(1+\alpha)^k} 
			+\frac{C_{\alpha,p,\mu}}{(k+1)^{2p}},
		\end{equation}	
		where $\mathcal L_0: = 
		f(x_{0})-f(x^*) + 
		\frac{\gamma_{0}}{2}
		\nm{v_{0}-x^*}^2$.
	\end{thm}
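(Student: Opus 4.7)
First I would pin down the auxiliary quantities. Since $\gamma_0=\mu$, the one-step recursion $(1+\alpha)\gamma_{k+1}=\gamma_k+\alpha\mu$ coming from \cref{eq:gammak} with $\alpha_k=\alpha$ has $\gamma_k\equiv\mu$ as its fixed point, so by induction $\gamma_k=\mu$ for every $k$. Substituting this into \cref{eq:lambdak-wk} gives $\eta_k=\mu(1+2\alpha)$ and therefore $\lambda_k=\alpha^2/(\mu+2\mu\alpha)$, as claimed in the statement; correspondingly $w_k=\bigl(\alpha v_k+(1+\alpha)x_k\bigr)/(1+2\alpha)$.

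Next I would set up a discrete Lyapunov function mirroring \cref{eq:Lt}, namely
\[
\mathcal L_k:=f(x_k)-f(x^*)+\frac{\mu}{2}\nm{v_k-x^*}^2,
\]
and aim for the one-step contraction $(1+\alpha)\mathcal L_{k+1}\leqslant \mathcal L_k+c_{\alpha,\mu}\varepsilon_k^2$. To obtain it, I would apply \cref{lem:key-type1} to $x_{k+1}=_{1,\varepsilon_k}\!\proxi_{\lambda_k f}(w_k)$ twice, once with $y=x^*$ and once with $y=x_k$, and form the convex combination with weights $\alpha$ and $1$. The identity $x_{k+1}-x_k=\alpha(v_{k+1}-x_{k+1})$ collapses the test vectors $\alpha(x_{k+1}-x^*)+(x_{k+1}-x_k)$ into $\alpha(v_{k+1}-x^*)$, while the explicit form of $(x_{k+1}-w_k)/\lambda_k$ obtained from \cref{eq:IAPA-difff-k1} with $\gamma_k=\mu$ rewrites the resulting inner product as
\[
-\frac{\alpha}{\lambda_k}\dual{x_{k+1}-w_k,v_{k+1}-x^*}=\alpha\mu\dual{x_{k+1}-v_{k+1},v_{k+1}-x^*}+\mu\dual{v_{k+1}-v_k,v_{k+1}-x^*}.
\]
The second term, expanded via the polarization identity, supplies exactly the difference $(\mu/2)\bigl[(1+\alpha)\nm{v_{k+1}-x^*}^2-\nm{v_k-x^*}^2\bigr]-(\mu/2)\nm{v_{k+1}-v_k}^2$, which is what one needs to build $(1+\alpha)\mathcal L_{k+1}-\mathcal L_k$. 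The $\sigma_k$ contribution in \cref{lem:key-type1} is $O(\varepsilon_k)$ in norm, and after a Young-type absorption against the quadratic dissipation terms $\nm{x_{k+1}-v_{k+1}}^2$, $\nm{v_{k+1}-v_k}^2$ and $(1+\lambda_k\mu)\nm{x_{k+1}-\widehat x_{k+1}}^2/\lambda_k$ that the lemma provides, the remainder is controlled purely by $\varepsilon_k^2$.

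Once the contraction $(1+\alpha)\mathcal L_{k+1}\leqslant \mathcal L_k+c_{\alpha,\mu}\varepsilon_k^2$ is in hand, iteration yields
\[
\mathcal L_k\leqslant \frac{\mathcal L_0}{(1+\alpha)^k}+c_{\alpha,\mu}\sum_{j=0}^{k-1}\frac{\varepsilon_j^2}{(1+\alpha)^{k-j}},
\]
and I would estimate the error sum using $\varepsilon_j\leqslant (j+1)^{-p}$ by splitting at $j=\lfloor k/2\rfloor$: the $j<k/2$ block contributes at most $C(1+\alpha)^{-k/2}\sum_{j\geqslant 0}(j+1)^{-2p}$, which is absorbed into $\mathcal L_0/(1+\alpha)^k$ and accounts for the factor $2$ in $2\mathcal L_0$, while the $j\geqslant k/2$ block is bounded by $\bigl(\lfloor k/2\rfloor+1\bigr)^{-2p}\sum_{j\geqslant 0}(1+\alpha)^{-j-1}=O\bigl((k+1)^{-2p}\bigr)$. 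Combining these two contributions gives the stated bound \cref{eq:conv-AIPPA-mu>0}.

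The principal obstacle I anticipate is the bookkeeping in the second paragraph: the inexact residual $\sigma_k$ couples $v_{k+1}-x^*$ into the error, and a naive Cauchy--Schwarz would give only $O(\varepsilon_k)$ instead of $O(\varepsilon_k^2)$, which would destroy the $(k+1)^{-2p}$ tail. Extracting the correct quadratic order requires using both the quadratic $\frac{1+\lambda_k\mu}{2\lambda_k}\nm{x_{k+1}-\widehat x_{k+1}}^2$ cushion from \cref{lem:key-type1} and the dissipation $(\mu/2)\nm{v_{k+1}-v_k}^2$ generated by expanding the Lyapunov difference, with constants tuned so that $\alpha,\mu$ appear only through $c_{\alpha,\mu}$ and the geometric factor $1/(1+\alpha)$ is preserved.
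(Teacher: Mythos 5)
Your first paragraph ($\gamma_k\equiv\mu$, $\eta_k=\mu(1+2\alpha)$, $\lambda_k=\alpha^2/(\mu+2\mu\alpha)$) and your final summation step are fine, but the central claim of your second paragraph --- the one-step contraction $(1+\alpha)\mathcal L_{k+1}\leqslant \mathcal L_k+c_{\alpha,\mu}\varepsilon_k^2$ --- has a genuine gap. When you apply \cref{lem:key-type1} with $y=x^*$ and $y=x_k$ and collapse the test vectors into $\alpha(v_{k+1}-x^*)$, the residual $\sigma_k$ contributes the cross term $\tfrac{\alpha}{\lambda_k}\dual{\sigma_k,\,v_{k+1}-x^*}$, which is linear in $\varepsilon_k$ times $\nm{v_{k+1}-x^*}$. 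None of the quadratic cushions you list --- $\nm{x_{k+1}-v_{k+1}}^2$, $\nm{v_{k+1}-v_k}^2$, $\tfrac{1+\lambda_k\mu}{2\lambda_k}\nm{x_{k+1}-\widehat x_{k+1}}^2$ --- controls $\nm{v_{k+1}-x^*}$; they are all \emph{increments}, not distances to $x^*$. The only available $-\tfrac{\mu}{2}\nm{x_{k+1}-x^*}^2$ from strong convexity is exactly consumed in cancelling the $+\tfrac{\mu\alpha_k}{2}\nm{x_{k+1}-x^*}^2$ that arises when expanding $\mathcal L_{k+1}-\mathcal L_k$, and the $-\tfrac{\alpha\mu}{2}\nm{v_{k+1}-x^*}^2$ sits inside the $-\alpha\mathcal L_{k+1}$ you need intact to get the factor $(1+\alpha)$. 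If you sacrifice a fraction $\theta$ of it to absorb the cross term by Young's inequality, you do get an $O(\varepsilon_k^2)$ per-step error, but the geometric factor degrades to $(1+(1-\theta)\alpha)^{-k}$, contradicting your assertion that $1/(1+\alpha)$ is preserved and yielding a strictly weaker statement than \cref{eq:conv-AIPPA-mu>0}.

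The paper resolves exactly this obstacle differently: \cref{lem:conv-Lk-AIPPA} keeps the linear cross term $\tfrac{\alpha_k}{\lambda_k}\nm{\xi_{k+1}}\nm{v_{k+1}-x^*}$ in the recursion, first applies the discrete Gronwall inequality (\cref{lem:Gronwall-discrete}) to bound $\sqrt{\gamma_k/\beta_k}\,\nm{v_k-x^*}$ by $\sqrt{2(\mathcal L_0+\Upsilon_k)}+2\Omega_k$, and then re-substitutes this bound into the accumulated sum (the estimate \cref{eq:est-cross}). It is this bootstrapping that produces the squared-sum term $\Omega_k^2=\big(\sum_i(1+\alpha)^{i/2}\varepsilon_i\cdot\text{const}\big)^2$, which after division by $(1+\alpha)^k$ and an application of \cref{lem:estk-It} gives the $(k+1)^{-2p}$ tail while keeping the exact factor $(1+\alpha)^{-k}$ in front of $2\mathcal L_0$. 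To repair your argument you would need to either adopt this two-stage Gronwall device or prove an a priori uniform bound on $\nm{v_k-x^*}$ strong enough to make the cross term summable at the right rate --- neither of which follows from the absorption you describe.
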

	\begin{rem}
		Note that the estimate \cref{eq:conv-AIPPA-mu>0} matches the continuous decay rate \cref{eq:rate-mu>0}.
		In addition, under the assumption of \cref{thm:conv-AIPPA-mu>0} (except \cref{eq:decay-epsilonk}), if we impose stronger condition that $\{(1+\alpha)^{k/2}\varepsilon_k\}_{k=0}^{\infty}$ is summable, then by \cref{eq:key-mu>0,lem:conv-Lk-AIPPA}, 
		we have linear convergence rate $O((1+\alpha)^{-k})$.
	\end{rem}
	To prove the above two theorems, we need several preparations.
	Let $\{(\gamma_k,x_k,v_k)\}_{k=0}^{\infty}$ be generated 
	by \cref{eq:gammak,eq:AIPPA-xk1-type1}.
	Define a discrete Lyapunov function 
	\begin{equation}\label{eq:Lk}
		\mathcal L_k: = 
		f(x_{k})-f(x^*) + 
		\frac{\gamma_{k}}{2}
		\nm{v_{k}-x^*}^2
		\quad \text{ for all } k\in\mathbb N,
	\end{equation}
	and we also need two error accumulation functions 
	\[
	\left\{
	\begin{split}
		{}&	\Upsilon_0 ={} 0,\quad\Upsilon_k =\frac{1}{2} \sum_{i=0}^{k-1}\frac{\varepsilon_i^2}{\lambda_{i}\beta_{i+1}},\quad k\geqslant 1,\\
		{}&\Omega_0={}0,\quad
		\Omega_k = \sum_{i=0}^{k-1}\frac{\alpha_i}{\lambda_i\beta_{i}}\cdot \sqrt{\frac{\beta_{i+1}}{\gamma_{i+1}}}\nm{\xi_{i+1}},\quad k\geqslant 1,
	\end{split}
	\right.
	\]
	where 
	$\xi_{k+1}=(1+\lambda_k\mu)(\proxi_{\lambda_k f}(w_k)-x_{k+1})$ and
	\begin{equation}
		\label{eq:betak}
		\beta_0 = {}1,\quad 
		\beta_{k} = \prod_{i=0}^{k-1}\frac{1}{1+\alpha_i},\quad k\geqslant 1.
	\end{equation}
	\begin{lem}\label{lem:conv-Lk-AIPPA}
		For the scheme \cref{eq:AIPPA-xk1-type1}, we have
		\begin{equation}\label{eq:conv-Lk-AIPPA}
			\mathcal L_k\leqslant 2\beta_k (\mathcal L_0+\Upsilon_k +\Omega^2_k)
			\quad \text{ for all } k\in\mathbb N.
		\end{equation}
	\end{lem}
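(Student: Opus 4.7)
The plan is to build a discrete analogue of the proof of \cref{lem:conv-Et-regu}, with the geometric weight $\beta_k=\prod_{i<k}(1+\alpha_i)^{-1}$ playing the role of the continuous factor $e^{-t}$ (note $\beta_{k+1}/\beta_k=1/(1+\alpha_k)$), and with \cref{lem:key-type1} replacing the subdifferential inequality used in the continuous analysis. The perturbation term $\xi_{k+1}=(1+\lambda_k\mu)(\proxi_{\lambda_k f}(w_k)-x_{k+1})$ will play the role of $\xi$ in the ODE.

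The first step is to derive a one-step recursion for $\mathcal L_k$. I would apply \cref{lem:key-type1} twice, once with $y=x^*$ and once with $y=x_k$, and combine them with weights $1$ and $\alpha_k$. The resulting left-hand side contains $(1+\alpha_k)(f(x_{k+1})-f(x^*))-(f(x_k)-f(x^*))$ together with the inner product $\frac{1}{\lambda_k}\dual{\xi_{k+1}+x_{k+1}-w_k,\,(1+\alpha_k)x_{k+1}-\alpha_k x_k-x^*}$. Using \cref{eq:AIPPA-vk1-type1} I would reduce $(1+\alpha_k)x_{k+1}-\alpha_k x_k$ to $\alpha_k v_{k+1}$, and then use \cref{eq:IAPA-difff-k1} to rewrite $(x_{k+1}-w_k)/\lambda_k$ in terms of $v_{k+1}-v_k$ and $v_{k+1}-x_{k+1}$. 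Expanding via the polarisation identity $\dual{a-b,a-c}=\tfrac12(\nm{a-b}^2+\nm{a-c}^2-\nm{b-c}^2)$ and invoking the implicit relation $\gamma_k+\alpha_k\mu=(1+\alpha_k)\gamma_{k+1}$ from \cref{eq:gammak} together with the design choice $\lambda_k=\alpha_k^2/\eta_k$, the squared norms line up to produce exactly the discrete energy difference $(1+\alpha_k)\gamma_{k+1}\nm{v_{k+1}-x^*}^2/2-\gamma_k\nm{v_k-x^*}^2/2$. After discarding the nonnegative remainders ($\gamma_k\nm{v_{k+1}-v_k}^2/2$, $\mu\nm{x_{k+1}-x_k}^2/2$, and $\alpha_k\mu\nm{v_{k+1}-x_{k+1}}^2/2$) and applying Cauchy--Schwarz to the $\xi_{k+1}$-term, I expect the clean recursion
\[
\mathcal L_{k+1}\leqslant\frac{1}{1+\alpha_k}\mathcal L_k+\frac{\varepsilon_k^2}{2\lambda_k}+\frac{\alpha_k}{\lambda_k(1+\alpha_k)}\nm{\xi_{k+1}}\nm{v_{k+1}-x^*}.
\]

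The second step is to divide this recursion by $\beta_{k+1}$ and telescope from $0$ to $k-1$, which yields
\[
\frac{\mathcal L_k}{\beta_k}\leqslant\mathcal L_0+\Upsilon_k+\sum_{i=0}^{k-1}\frac{\alpha_i}{\lambda_i\beta_i}\sqrt{\frac{\beta_{i+1}}{\gamma_{i+1}}}\nm{\xi_{i+1}}\cdot\sqrt{\frac{\gamma_{i+1}}{\beta_{i+1}}}\nm{v_{i+1}-x^*}.
\]
Since $\gamma_k\nm{v_k-x^*}^2/2\leqslant\mathcal L_k$, the second factor in each summand is bounded by $\sqrt{2\mathcal L_{i+1}/\beta_{i+1}}$, so after applying a discrete Gronwall-type argument (the analogue of the one used to derive the bound on $\sqrt{\gamma(t)}\nm{v_\lambda(t)-x^*}$ in the proof of \cref{lem:conv-Et-regu}) one arrives at $\sqrt{\gamma_k/\beta_k}\nm{v_k-x^*}\leqslant\sqrt{2(\mathcal L_0+\Upsilon_k)}+\sqrt{2}\,\Omega_k$. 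Substituting this back into the perturbation sum and using $2ab\leqslant a^2+b^2$ gives the claimed bound $\mathcal L_k\leqslant 2\beta_k(\mathcal L_0+\Upsilon_k+\Omega_k^2)$.

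The main obstacle will be the algebraic cancellation in the first step: verifying that the three relations $\eta_k=\gamma_k\alpha_k+\gamma_k+\mu\alpha_k$, $\lambda_k=\alpha_k^2/\eta_k$, and $(1+\alpha_k)\gamma_{k+1}=\gamma_k+\mu\alpha_k$ conspire with \cref{eq:IAPA-difff-k1} so that the telescoped inner product produces \emph{exactly} the required contraction factor $1/(1+\alpha_k)$ on $\mathcal L_k$. In the continuous case this miracle is automatic from $\gamma'=\mu-\gamma$; in the implicit discretisation it has to be checked by direct computation, and any deviation would introduce uncontrolled cross terms that would preclude the clean form of \cref{eq:conv-Lk-AIPPA}.
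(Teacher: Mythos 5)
Your proposal follows essentially the same route as the paper: the same one-step recursion obtained by applying \cref{lem:key-type1} at $y=x_k$ and $y=x^*$ (the paper imports the resulting energy-difference identity from \cite{luo_chen_differential_2019} rather than re-deriving it by polarisation), followed by telescoping against $\beta_k$, the discrete Gronwall bound on $\sqrt{\gamma_k/\beta_k}\,\nm{v_k-x^*}$, and resubstitution into the perturbation sum. The only slips are cosmetic: the weights should be $\alpha_k$ on $y=x^*$ and $1$ on $y=x_k$, so the extrapolated point is $(1+\alpha_k)x_{k+1}-x_k-\alpha_k x^*=\alpha_k(v_{k+1}-x^*)$, and \cref{lem:Gronwall-discrete} yields the constant $2\Omega_k$ rather than $\sqrt{2}\,\Omega_k$ --- neither affects the final bound \cref{eq:conv-Lk-AIPPA}.
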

	\begin{proof}	
		Collecting \cref{eq:AIPPA-vk1-type1,eq:IAPA-difff-k1}, we see
		\[
		\left\{
		\begin{split}
			\frac{x_{k+1}-x_k}{\alpha_k}={}&	v_{k+1}-x_{k+1},\\
			\frac{v_{k+1}-v_{k}}{\alpha_k}={}&
			\frac{\mu}{\gamma_k}  (x_{k+1}-v_{k+1})
			-\frac{1}{\gamma_k}
			\frac{w_k-x_{k+1}}{\lambda_k}.
		\end{split}
		\right.
		\]
		Thanks to this discretization formulation, we can follow the deduction from 
		\cite[Theorem 3.1]{luo_chen_differential_2019} to obtain the difference
		\[
		\begin{split}
			\mathcal L_{k+1}-\mathcal L_{k} 
			={}&f(x_{k+1})-f(x_k) -
			\frac{\alpha_k\gamma_{k+1}}{2}
			\nm{v_{k+1}-x^*}^2- 
			\frac{\gamma_k}2
			\nm{v_{k+1}-v_k}^2\\
			{}&\quad- 	\frac{1}{\lambda_k}\dual{ w_k-x_{k+1}, x_{k+1} - x_{k}} 
			-	\frac{\alpha_k}{\lambda_k}\dual{ w_k-x_{k+1}, x_{k+1} - x^{*}}\\
			{}&\quad\qquad +
			\frac{	\mu\alpha_k}{2}\left(\left\|x_{k+1}-x^{*}\right\|^{2}-\|x_{k+1}-v_{k+1}\|^{2}\right).
		\end{split}
		\]
		Invoking \cref{lem:key-type1}, we have
		\[
		\begin{split}
			- 	\frac{1}{\lambda_k}\dual{ w_k-x_{k+1}, x_{k+1} - x_{k}} \leqslant {}&\frac{\varepsilon_k^2}{2\lambda_k}+f(x_k)-f(x_{k+1})
			-\frac{1}{\lambda_k}\dual{\xi_{k+1},x_{k+1}-x_k},
		\end{split}
		\]
		and
		\[
		\begin{split}
			{}&-	\frac{1}{\lambda_k}\dual{ w_k-x_{k+1}, x_{k+1} - x^{*}}\\
			\leqslant
			{}&\frac{\varepsilon_k^2}{2\lambda_k}+f(x^*)-f(x_{k+1})
			-\frac{1}{\lambda_k}\dual{\xi_{k+1},x_{k+1}-x^*}-
			\frac{\mu}2\nm{x_{k+1}-x^*}^2.
		\end{split}
		\]
		Consequently, it follows that
		\[
		\begin{split}
			\mathcal L_{k+1}-\mathcal L_{k} 
			\leqslant{}&
			-\alpha_k \mathcal L_{k+1}+\frac{1+\alpha_k}{2\lambda_k}\varepsilon_k^2
			-\frac{1}{\lambda_k}\dual{\xi_{k+1},x_{k+1}-x_k+\alpha_k(x_{k+1}-x^*)}\\
			{}&\quad -\frac{	\mu\alpha_k}{2}\|x_{k+1}-v_{k+1}\|^{2}- 
			\frac{\gamma_k}2\nm{v_{k+1}-v_k}^2.
		\end{split}
		\]
		Dropping surplus nonpositive terms and using \cref{eq:AIPPA-vk1-type1}, we 
		arrive at
		\[
		\begin{split}
			\mathcal L_{k+1}-\mathcal L_{k} 
			\leqslant{}&
			-\alpha_k \mathcal L_{k+1}+\frac{1+\alpha_k}{2\lambda_k}\varepsilon_k^2
			-\frac{\alpha_k}{\lambda_k}\dual{\xi_{k+1},v_{k+1}-x^*}\\
			\leqslant{}&
			-\alpha_k \mathcal L_{k+1}+\frac{1+\alpha_k}{2\lambda_k}\varepsilon_k^2
			+\frac{\alpha_k}{\lambda_k}\nm{\xi_{k+1}}\nm{v_{k+1}-x^*}.
		\end{split}
		\]
		Recursively, it holds that	
		\begin{equation}\label{eq:Lk-1}
			\begin{split}
				\mathcal L_k\leqslant{}& \beta_{k}\mathcal L_0+\beta_{k}
				\sum_{i=0}^{k-1}\frac{1}{\beta_{i}}
				\left(
				\frac{1+\alpha_i}{2\lambda_i}\varepsilon_i^2
				+\frac{\alpha_i}{\lambda_i}\nm{\xi_{i+1}}\nm{v_{i+1}-x^*}
				\right)\\
				={}&\beta_k(\mathcal{L}_0+\Upsilon_k)
				+\beta_{k}\sum_{i=0}^{k-1}\frac{\alpha_i}{\lambda_i\beta_{i}}
				\nm{\xi_{i+1}}\nm{v_{i+1}-x^*}.
			\end{split}
		\end{equation}
		
		Note that above inequality implies 
		\[
		\frac{\gamma_k}{2}\nm{v_k-x^*}^2\leqslant \beta_k\left(\mathcal L_0+\Upsilon_k\right) +\beta_{k}\sum_{i=0}^{k-1}\frac{\alpha_i}{\lambda_i\beta_{i}}
		\nm{\xi_{i+1}}\nm{v_{i+1}-x^*}.
		\]
		Therefore, utilizing \cref{lem:Gronwall-discrete} gives
		\[
		\sqrt{\frac{\gamma_k}{\beta_k}}\nm{v_k-x^*}\leqslant \sqrt{2(\mathcal L_0+\Upsilon_k)}
		+2\sum_{i=0}^{k-1}
		\frac{\alpha_i\nm{\xi_{i+1}}}{\lambda_i\beta_{i}}
		\sqrt{\frac{\beta_{i+1}}{\gamma_{i+1}}}
		=\sqrt{2(\mathcal L_0+\Upsilon_k)}+2\Omega_k.
		\]
		This can be reused for the estimate
		\begin{equation}\label{eq:est-cross}
			\small
			\begin{aligned}
				{}&  
				\sum_{i=0}^{k-1}
				\frac{\alpha_i}{\lambda_i\beta_{i}}
				\nm{\xi_{i+1}}\nm{v_{i+1}-x^*}
				\leqslant {} \sum_{i=0}^{k-1}
				\frac{\alpha_i\nm{\xi_{i+1}}}{\lambda_i\beta_{i}}\cdot
				\sqrt{\frac{\beta_{i+1}}{\gamma_{i+1}}}
				\cdot
				\left(\sqrt{2(\mathcal L_0+\Upsilon_i)}
				+2\Omega_i
				\right)\\
				= {}& \Omega_k^2+  \sum_{i=0}^{k-1}
				\frac{\alpha_i\nm{\xi_{i+1}}}{\lambda_i\beta_{i}}\cdot
				\sqrt{\frac{\beta_{i+1}}{\gamma_{i+1}}}
				\cdot\sqrt{2(\mathcal L_0+\Upsilon_i)}
				\leqslant  2\Omega_k^2+\mathcal L_0+\Upsilon_k.
			\end{aligned}
		\end{equation}
		This together with \cref{eq:Lk-1} gives \cref{eq:conv-Lk-AIPPA} and concludes the proof.
	\end{proof}
	We now arrive at a position for proving \cref{thm:conv-AIPPA-mu-0,thm:conv-AIPPA-mu>0}.
	
	\medskip\noindent{\bf Proof of \cref{thm:conv-AIPPA-mu-0}.} 
	By \cref{lem:key-type1}, we have $\nm{\xi_{k+1}}\leqslant \varepsilon_{k}$. In view of \cref{eq:lambdak-wk} and our choice $\alpha_k^2=\gamma_k(1+\alpha_k)$, 
	it is easy to get $\lambda_k = 1$. Observing \cref{eq:gammak}, we conclude 
	that $\beta_{k}=\gamma_{k}/\gamma_0$, and thus, it follows 
	\[
	\begin{split}
		\Upsilon_k= {}&\frac{\gamma_0}{2} \sum_{i=0}^{k-1}\frac{\varepsilon_i^2}{\gamma_{i+1}},\quad 
		\Omega_k ={} \sqrt{\gamma_0}\sum_{i=0}^{k-1}\frac{\nm{\xi_{i+1}}}{\sqrt{\gamma_{i+1}}}
		\leqslant \sqrt{\gamma_0}\sum_{i=0}^{k-1}
		\frac{\varepsilon_i}{\sqrt{\gamma_{i+1}}}.
	\end{split}
	\]
	As \cref{lem:gammak-ak} implies 
	\[
	\frac{\gamma_0}{(\sqrt{\gamma_0} \, k +1)^2}
	\leqslant 
	\gamma_k\leqslant
	\frac{4\gamma_0}{(\sqrt{\gamma_0} \, k + 2)^2},
	\]
	we obtain from \cref{lem:conv-Lk-AIPPA} and the choice $\gamma_0=4$ that
	\begin{equation}\label{eq:key-mu-0}
		\mathcal L_k\leqslant \frac{2\mathcal L_0}{(k+1)^2} +\frac{9}{(k+1)^2}\sum_{i=0}^{k-1}(i+1)^{2}\varepsilon_i^2
		+\frac{18}{(k+1)^2}\left(\sum_{i=0}^{k-1}(i+1)\varepsilon_i\right)^2.
	\end{equation}
	Since $\varepsilon_{k}$ satisfies \cref{eq:decay-epsilonk}, 
	invoking the following two elementary estimates:
	\begin{equation}\label{eq:1-p}
		\begin{split}
			{}&\sum_{i=0}^{k-1} (i+1)^{1-p}
			\leqslant1+\int_{0}^{k}(s+1)^{1-p}\dd s
			\leqslant {}
			\left\{
			\begin{aligned}
				&1+\ln (k+1),&&\text{ if } p=2,\\
				&C_{p}\left(1+(k+1)^{2-p}\right),&&\text{ else},
			\end{aligned}
			\right.
		\end{split}
	\end{equation}
	and 
	\begin{equation}\label{eq:2-2p}
		\begin{split}
			{}&\sum_{i=0}^{k-1} (i+1)^{2-2p}
			\leqslant1+\int_{0}^{k}(s+1)^{2-2p}\dd s
			\leqslant {}
			\left\{
			\begin{aligned}
				&1+\ln (k+1),&&\text{ if } p=3/2,\\
				&C_{p}\left(1+(k+1)^{3-2p}\right),&&\text{ else},
			\end{aligned}
			\right.
		\end{split}
	\end{equation}
	we finally establish \cref{eq:conv-AIPPA-mu-0} and finish the proof of \cref{thm:conv-AIPPA-mu-0}. \qed
	
	\medskip\noindent{\bf Proof of \cref{thm:conv-AIPPA-mu>0}.} 
	Since $\gamma_0=\mu$, according to \cref{lem:gammak-ak}, $\gamma_k=\mu$ and thus by \cref{eq:lambdak-wk}, $\lambda_k=\lambda$. Moreover, by \cref{lem:key-type1}, $\nm{\xi_{k+1}}\leqslant \varepsilon_{k}\sqrt{1+\lambda\mu}\leqslant \varepsilon_{k}\sqrt{1+\alpha}$. It follows immediately that
	\begin{equation}\label{eq:key-mu>0}
		\Upsilon_k= {}\frac{1}{2\lambda} \sum_{i=0}^{k-1}(1+\alpha)^{i+1}\varepsilon_i^2,\quad 
		\Omega_k \leqslant {}\frac{\sqrt{\alpha/\mu}}{\lambda}\sum_{i=0}^{k-1}(1+\alpha)^{(i+1)/2}\varepsilon_i,
	\end{equation}
	and we use \cref{eq:decay-epsilonk,lem:conv-Lk-AIPPA} to get
	\[
	\mathcal L_k\leqslant \frac{2\mathcal L_0}{(1+\alpha)^k}+
	\frac{C_{\alpha,p,\mu}}{(1+\alpha)^k}\left[\sum_{i=1}^{k}\frac{(1+\alpha)^i}{(i+1)^{2p}}
	+\left(\sum_{i=1}^{k}\frac{(1+\alpha)^{i/2}}{(i+1)^p}\right)^2\right].
	\]
	Thus, applying \cref{lem:estk-It} proves \cref{eq:conv-AIPPA-mu>0} and 
	concludes the proof of \cref{thm:conv-AIPPA-mu>0}. \qed
	\section{An Inexact Accelerated PGM}
	\label{sec:AIPGM}
	We now move to the composite case 
	$f = h+g$ where the smooth part $h\in\mathcal S_{\mu,L}^{1,1}$ with $0\leqslant \mu\leqslant L<\infty$ and the nonsmooth part $g\in\mathcal S_0^0$. 
	In this case, the first-order system \cref{eq:ADI-x} becomes 
	\begin{subnumcases}{}
		\label{eq:h-g-ADI-x}
		{~\,}x' = v-x,\\
		\gamma v' \in \mu(x-v)-\nabla h(x)-\partial g(x)+\xi.
		\label{eq:h-g-ADI-v}
	\end{subnumcases}
	
	In the last section, we investigated an implicit scheme that involves 
	the (approximated) gradient mapping $\proxi_{\lambda_k f}$. 
	However, for the current separable case $f = h+g$, it would be 
	more efficient to use semi-implicit scheme, which means we use explicit scheme for smooth part $h$ and implicit scheme for nonsmooth part $g$. In other words, consider the splitting method
	\begin{subnumcases}{}
		\label{eq:semi-ADI-xk1}
		\frac{x_{k+1}-x_k}{\alpha_k} = v_{k+1}-x_{k+1},\\
		\gamma_k 	\frac{v_{k+1}-v_k}{\alpha_k} \in 
		\mu(x_{k+1}-v_{k+1})-\nabla h(x_k)
		-\partial g(x_{k+1})+\xi_{k},
		\label{eq:semi-ADI-vk1}
	\end{subnumcases}
	where the equation \cref{eq:gamma} of the scaling factor 
	$\gamma$ is still discretized implicitly by \cref{eq:gammak}. 
	\subsection{Perturbed gradient mapping}
	Unlike the case \eqref{eq:AIPPA-xk1}, the step \eqref{eq:semi-ADI-vk1} involves inexact calculations of both $\nabla h(x_k)$ and $\proxi_{\lambda g}$. We can simply replace 
	$\nabla h(x_k)$ with inexact data $\widehat{\nabla} h(x_k)$ \cite{attouch_fast_convergence_2018,daspremont_smooth_2008}. This really occurs if one applies Tikhonov regularization \cite{attouch_inertial_2018}. Approximations
	to $\proxi_{\lambda g}$ mainly consider three types of inexact proximal mappings introduced in \cref{sec:inexact-proxi}; see those methods in \cite{monteiro_convergence_2010,villa_accelerated_2013}. 
	
	Following \cite{aujol_stability_2015,schmidt_convergence_2011}, we consider $type$-1 approximation to $\proxi_{\lambda g}$ together with inexact data $\widehat{\nabla } h(x_k)$. Note that in \cite{aujol_stability_2015} only convex case is considered and that in \cite[Propsitions 2 and 4]{schmidt_convergence_2011}, the convergence rates are established with accumulated errors (also see our \cref{lem:conv-Lk}) and no further decay rate is given with specific error, saying $\varepsilon_{k}=O(1/k^p)$. We shall analyse both convex and strongly convex cases and present some new convergence rate estimates.
	
	Motivated by \cite{luo_chen_differential_2019,nesterov_gradient_2013}, for ease of analysis, we introduce the concept of perturbed gradient mapping.
	Given $\lambda>0,\,\tau\geqslant 0$ and $\varepsilon\geqslant 0$, define
	\begin{equation}
		\label{eq:in-gd-map}
		\mathcal G_{\lambda f}(x,\tau,\varepsilon)
		:={}\frac{1}{\lambda}
		(x-{S}_{\lambda f}(x,\tau,\varepsilon))\quad \text{ for all } x\in \mathcal H,
	\end{equation}
	where ${S}_{\lambda f}(x,\tau,\varepsilon)\approx_{1,\varepsilon}\!\proxi_{\lambda g}(x-\lambda\widehat{\nabla}h(x))$ and $\widehat{\nabla}h(x)$ provides some approximation to $\nabla h(x)$ such that $\|\widehat{\nabla}h(x)-\nabla h(x)\|\leqslant \tau/\lambda$. For the case $\varepsilon=0$, we have ${S}_{\lambda f}(x,\tau,0) = \proxi_{\lambda g}(x-\lambda\widehat{\nabla}h(x))$ and by \cref{eq:sub-inf} we see that
	\[
	\mathcal G_{\lambda f}(x,\tau,0)\in \widehat{\nabla}h(x)
	+\partial g\big({S}_{\lambda f}(x,\tau,0)\big).
	\]
	
	Moreover, the following lemma is also crucial for our convergence analysis. Thanks to \cref{lem:key-type1}, the proof is the same as that of \cite[Lemma 3]{luo_chen_differential_2019} and we omit it here.
	\begin{lem}\label{lem:key-Gf}
		Let $\lambda>0,\,\tau\geqslant 0 $ and $\varepsilon\geqslant 0$ be given. 
		Assume $f = h+g$ where $h\in\mathcal S_{\mu,L}^{1,1}$ with $0\leqslant \mu\leqslant L<\infty$ and $g\in\mathcal S_0^0$. For any $x,y\in \mathcal H$, we have 
		\[
		\begin{split}
			\frac{\varepsilon^2}{2\lambda}+	f(y)\geqslant{}&
			f\big({S}_{\lambda f}(x,\tau,\varepsilon)\big)
			+\dual{\mathcal G_{\lambda f}(x,\tau,\varepsilon),y-x}
			+\frac{1}{\lambda}\dual{\sigma+e,y-S_{\lambda f}(x,\tau,\varepsilon)}\\
			{}&\qquad+\frac{\mu}{2}\nm{y-x}^2
			+\frac{\lambda}{2}(2-\lambda   L)
			\nm{\mathcal G_{\lambda f}(x,\tau,\varepsilon)}^2,
		\end{split}
		\]
		where $\sigma = {S}_{\lambda f}(x,\tau,\varepsilon)-{S}_{\lambda f}(x,\tau,0)$ 
		and $e =\lambda\big(\nabla h(x)- \widehat{\nabla}h(x)\big)$.
	\end{lem}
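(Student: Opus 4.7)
\medskip

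\noindent\textbf{Proof sketch for \cref{lem:key-Gf}.}
The plan is to split $f=h+g$, handle the smooth part $h$ by a combination of strong convexity and the descent inequality, handle the nonsmooth part $g$ by applying \cref{lem:key-type1} at the perturbed point $\widetilde{x}:=x-\lambda\widehat{\nabla}h(x)$, and then add the two bounds and collect terms using the definition
\[
\mathcal G_{\lambda f}(x,\tau,\varepsilon)=\frac{x-S_{\lambda f}(x,\tau,\varepsilon)}{\lambda}.
\]
Throughout I shall abbreviate $w:=S_{\lambda f}(x,\tau,\varepsilon)$, $w_{0}:=S_{\lambda f}(x,\tau,0)=\proxi_{\lambda g}(\widetilde{x})$ and $\mathcal G:=\mathcal G_{\lambda f}(x,\tau,\varepsilon)$, so that $\sigma=w-w_{0}$ and $e=\lambda(\nabla h(x)-\widehat{\nabla}h(x))$.

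First I would bound $h(y)-h(w)$ from below. Since $h\in\mathcal S^{1,1}_{\mu,L}$, strong convexity at $x$ gives $h(y)\geqslant h(x)+\dual{\nabla h(x),y-x}+\tfrac{\mu}{2}\nm{y-x}^{2}$, while the descent lemma at $w$ yields $h(w)\leqslant h(x)+\dual{\nabla h(x),w-x}+\tfrac{L}{2}\nm{w-x}^{2}$. Subtracting and using $w-x=-\lambda\mathcal G$ produces
\[
h(y)-h(w)\;\geqslant\;\dual{\nabla h(x),y-w}+\frac{\mu}{2}\nm{y-x}^{2}-\frac{L\lambda^{2}}{2}\nm{\mathcal G}^{2}.
\]

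Next I would apply \cref{lem:key-type1} to $g\in\mathcal S_{0}^{0}$ at the base point $\widetilde{x}$, noting that by definition $w=_{1,\varepsilon}\proxi_{\lambda g}(\widetilde{x})$. This gives
\[
\frac{\varepsilon^{2}}{2\lambda}+g(y)\;\geqslant\;g(w)+\frac{1}{\lambda}\dual{(w_{0}-w)+w-\widetilde{x},\,w-y}+\frac{1}{2\lambda}\nm{w-w_{0}}^{2}.
\]
Since $w-\widetilde{x}=w-x+\lambda\widehat{\nabla}h(x)=-\lambda\mathcal G+\lambda\widehat{\nabla}h(x)$, a short rearrangement rewrites the inner-product as
\[
\frac{1}{\lambda}\dual{w_{0}-w+w-\widetilde{x},\,w-y}=\dual{\mathcal G,y-w}-\dual{\widehat{\nabla}h(x),y-w}+\frac{1}{\lambda}\dual{\sigma,y-w},
\]
where I used $w_{0}-w=-\sigma$. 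Writing $\widehat{\nabla}h(x)=\nabla h(x)-e/\lambda$ then replaces $-\dual{\widehat{\nabla}h(x),y-w}$ with $-\dual{\nabla h(x),y-w}+\tfrac{1}{\lambda}\dual{e,y-w}$.

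Now I would add the bounds for $h$ and $g$. The terms $\pm\dual{\nabla h(x),y-w}$ cancel exactly, and what remains is
\[
\frac{\varepsilon^{2}}{2\lambda}+f(y)\;\geqslant\;f(w)+\dual{\mathcal G,y-w}+\frac{1}{\lambda}\dual{\sigma+e,y-w}+\frac{\mu}{2}\nm{y-x}^{2}+\frac{1}{2\lambda}\nm{\sigma}^{2}-\frac{L\lambda^{2}}{2}\nm{\mathcal G}^{2}.
\]
Finally I would replace $\dual{\mathcal G,y-w}$ by $\dual{\mathcal G,y-x}+\dual{\mathcal G,x-w}=\dual{\mathcal G,y-x}+\lambda\nm{\mathcal G}^{2}$, so that combining the explicit $\lambda\nm{\mathcal G}^{2}$ with the leftover $-\tfrac{L\lambda^{2}}{2}\nm{\mathcal G}^{2}$ yields the advertised coefficient $\tfrac{\lambda}{2}(2-\lambda L)\nm{\mathcal G}^{2}$; dropping the harmless nonnegative remainder $\tfrac{1}{2\lambda}\nm{\sigma}^{2}$ gives exactly the stated inequality.

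The main technical care is in step three: keeping track of the sign convention in \cref{lem:key-type1} (whose $\sigma$ is $w_{0}-w$, the negative of the $\sigma$ in the statement), and noticing that the $\widehat{\nabla}h(x)$ introduced by the semi-implicit base point $\widetilde{x}$ must be split as $\nabla h(x)-e/\lambda$ so that it can cancel against the smooth-part bound and leave a clean $\dual{e,\cdot}$ contribution. Everything else is bookkeeping using $w-x=-\lambda\mathcal G$ and the descent lemma.
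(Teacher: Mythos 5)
Your proof is correct: the splitting of $f=h+g$, the use of strong convexity plus the descent lemma for $h$, and the application of \cref{lem:key-type1} to $g$ at the shifted point $x-\lambda\widehat{\nabla}h(x)$ (with the sign of $\sigma$ tracked correctly and the harmless $\tfrac{1}{2\lambda}\nm{\sigma}^2$ dropped) is exactly the argument the paper points to, since it omits the proof and refers to \cref{lem:key-type1} together with the analogous lemma in the cited earlier work. No gaps.
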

	\subsection{The proposed algorithm}
	Let $\{\varepsilon_k\}_{k=0}^\infty$ and $\{\tau_k\}_{k=0}^\infty$ be two nonnegative sequences. We make the step \eqref{eq:semi-ADI-xk1} semi-implicit and plug the gradient mapping \cref{eq:in-gd-map} into \cref{eq:semi-ADI-vk1} to obtain 
	\begin{subnumcases}{}
		\label{eq:AIPGM-mid-vk1}
		\frac{x_{k+1}-x_k}{\alpha_k}=v_{k}-x_{k+1},\\
		\frac{v_{k+1}-v_{k}}{\alpha_k}={}
		\frac{\mu}{\gamma_k}  (x_{k+1}-v_{k+1})
		-\frac{1}{\gamma_k}
		\mathcal G_{\lambda_k f}(x_{k+1},\tau_k,\varepsilon_k),
		\label{eq:AIPGM-mid-xk1}
	\end{subnumcases}
	where $\lambda_k>0$ and $\widehat{\nabla}h(x_k)\approx\nabla h(x_k)$ is some approximation such that $\|\widehat{\nabla}h(x_k)-\nabla h(x_k)\|\leqslant \tau_k/\lambda_k$.
	
	Under the exact case $\tau_k =\varepsilon_k=0$, the scheme \cref{eq:AIPGM-mid-vk1} has been considered in \cite[Section 7.2]{luo_chen_differential_2019}, where, to promise the decay property of a Lyapunov function, an additional gradient descent step is supplemented. Hence, we also add an extra gradient step to \cref{eq:AIPGM-mid-vk1} and obtain the following scheme:
	\begin{subnumcases}{}
		\frac{y_k-x_{k}}{\alpha_k}={}v_{k}-y_k,
		\label{eq:AIPGM-yk}\\
		\frac{v_{k+1}-v_{k}}{\alpha_k}={}
		\frac{\mu}{\gamma_k}(y_k-v_{k+1})
		-\frac{1}{\gamma_k}
		\mathcal G_{\lambda_k f}(y_{k},\tau_k,\varepsilon_k),
		\label{eq:AIPGM-vk1}\\
		x_{k+1} ={}y_k-\lambda_k\mathcal G_{\lambda_k f}(y_{k},\tau_k,\varepsilon_k).
		\label{eq:AIPGM-xk1}
	\end{subnumcases}
	As mentioned 
	before, equation \cref{eq:gamma} for $\gamma$ is discretized 
	implicitly by \cref{eq:gammak}. Besides, we  impose the condition 
	\begin{equation}\label{eq:cond-step}
		\lambda_k = 1/L,\quad 2L\alpha_k^2=\gamma_{k}(\alpha_k+1).
	\end{equation}
	
	Note that both $\{\alpha_k\}_{k=0}^{\infty}$ and $\{\gamma_k\}_{k=0}^{\infty}$ are well defined by \cref{eq:gammak,eq:cond-step} and 
	\[
	\begin{split}
		&\min\{\gamma_0,\mu\}
		\leqslant \gamma_k\leqslant \max\{\gamma_0,\mu\},\quad 
		\min\{\alpha_{0},\alpha_{\mu}\}
		\leqslant \alpha_k\leqslant \max\{\alpha_{0},\alpha_{\mu}\},
	\end{split}
	\]
	for all $k\in\mathbb N$,
	where $\alpha_\mu$ solves $2L\alpha_\mu^2=\mu(1+\alpha_{\mu})$, i.e.,
	\begin{equation}\label{eq:amu}
		\alpha_{\mu} = \frac{1}{4L}\left(\mu+\sqrt{\mu^2+8\mu L}\right)
		\in\left[\sqrt{\frac{\mu}{2L}},
		\sqrt{\frac{\mu}{L}}\,
		\right].
	\end{equation}
	Moreover, as $k\to\infty$, we have $\alpha_k\to\alpha_\mu$ 
	and $\gamma_k\to\mu$ and particularly, if $\gamma_0=\mu$, then $\gamma_k=\mu$ and $\alpha_k = \alpha_{\mu}$; see \cref{lem:gammak-ak} for more detailed asymptotic estimates.
	
	We now summarize the scheme \cref{eq:AIPGM-yk} together 
	with \cref{eq:gammak,eq:cond-step} as an algorithm below.
	\begin{small}
		\begin{algorithm}[H]
			\caption{Inexact Accelerated Proximal Gradient Method}
			\label{algo:AIPGM}
			\begin{algorithmic}[1] 
				\REQUIRE  $x_0,v_0\in \mathcal H,\gamma_0>0$ and $\lambda=1/L$.
				\FOR{$k=0,1,\ldots$}
				\STATE Set $\Delta_k = \gamma_k^2+4L\gamma_k$ and compute $\alpha_k=\lambda(\gamma_k+\sqrt{\Delta_k})/2$. 
				\STATE Update $\gamma_{k+1} = (\gamma_k+\mu\alpha_k)/(1+\alpha_k)$.
				\STATE Set $y_k = (x_k+\alpha_kv_k)/(1+\alpha_k)$.
				\STATE Given $\tau_k\geqslant 0$, compute $\widehat{\nabla }h(y_k)$ such that $\|\nabla h(y_k)-\widehat{\nabla }h(y_k)\|\leqslant L \tau_k$.
				\STATE Given $\varepsilon_k\geqslant 0$, compute $x_{k+1} =_{1,\varepsilon_k}\!\proxi_{\lambda g}(y_k-\lambda\widehat{\nabla}h(x_k))$.
				\STATE Update $\displaystyle v_{k+1} = 
				\frac{1}{\gamma_{k}+\mu\alpha_k}
				\left(			\gamma_kv_k+\mu\alpha_ky_k-L\alpha_k(y_k-x_{k+1})\right)$.	
				\ENDFOR
			\end{algorithmic}
		\end{algorithm}
	\end{small}
	\subsection{Rate of convergence}
	Next, let us carefully investigate the convergence behaviour
	of \cref{algo:AIPGM} under the error decay assumption:
	\begin{equation}\label{eq:p-q}
		\varepsilon_k\leqslant \frac{1}{(k+1)^p}
		\quad\text{ and }\quad 
		\tau_k\leqslant\frac{1}{(k+1)^q},\quad p,\,q>0.
	\end{equation}
	The following two theorems give convergence rates for convex case ($\mu=0$) and strongly convex case ($\mu>0$), respectively. Our \cref{thm:conv-AIPGM-mu-0} recovers the results in
	\cite[Corrolaries 3.6 and 3.7]{aujol_stability_2015}, and \cref{thm:conv-AIPGM-mu>0} yields new convergence rate estimate.
	\begin{thm}
		\label{thm:conv-AIPGM-mu-0}
		Assume $f = h+g$ where $h\in\mathcal S_{0,L}^{1,1}$ and $g\in\mathcal S_0^0$. If $\gamma_0=L$ and \cref{eq:p-q} is true with $p,\,q>1$, then for \cref{algo:AIPGM}, we have 
		\begin{equation}
			\label{eq:conv-AIPGM-mu-0}
			f(x_k)-f(x^*)\leqslant 
			\frac{2\mathcal L_0}{(k+1)^2} 
			+ L\big(C_pT_k(p)+C_qT_k(q)\big),
		\end{equation}
		where $\mathcal L_0: = 
		f(x_{0})-f(x^*) + 
		\frac{\gamma_{0}}{2}
		\nm{v_{0}-x^*}^2$, and for any $r>1$, $T_k(r)$ is defined by that
		\[
		T_k(r):=\left\{
		\begin{aligned}
			&\frac{1+\ln^2(k+1)}{(k+1)^{2}},&&r=2,\\
			&(k+1)^{-2}+(k+1)^{2-2r},&&r>1 \text{ and } r\neq 2.
		\end{aligned}
		\right.
		\]
	\end{thm}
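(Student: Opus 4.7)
The plan is to mirror the Lyapunov analysis developed for the inexact PPA in \cref{sec:AIPPA}, adapting it to the semi-implicit splitting used in \cref{algo:AIPGM}. I introduce the same discrete Lyapunov function
$$\mathcal L_k := f(x_k)-f(x^*)+\frac{\gamma_k}{2}\nm{v_k-x^*}^2,$$
and aim to prove a one-step decrement of the form $\mathcal L_{k+1}-\mathcal L_k\leqslant -\alpha_k\mathcal L_{k+1}+E_k$, where $E_k$ aggregates the inexactness contributions from both the proximal step (with error $\varepsilon_k$) and the gradient approximation (with error $\tau_k$). Iterating this recursion with the weights $\beta_k=\prod_{i=0}^{k-1}(1+\alpha_i)^{-1}$ should yield a bound of the form $\mathcal L_k\leqslant 2\beta_k(\mathcal L_0+\Upsilon_k+\Omega_k^2)$ for suitable error-accumulation functions $\Upsilon_k,\Omega_k$, exactly paralleling \cref{lem:conv-Lk-AIPPA}.

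To derive the key decrement, I would invoke \cref{lem:key-Gf} twice: first with $y=y_k$ (or $y=x_k$) to bound $f(x_{k+1})-f(x_k)$, and then with $y=x^*$ to bound $f(x_{k+1})-f(x^*)$. The auxiliary point $y_k$ from \cref{eq:AIPGM-yk} plays the role of the inner argument, while \cref{eq:AIPGM-xk1} identifies $x_{k+1}=S_{\lambda_k f}(y_k,\tau_k,\varepsilon_k)$. The step-size balancing \cref{eq:cond-step}, namely $2L\alpha_k^2=\gamma_k(\alpha_k+1)$ together with $\lambda_k=1/L$, is tailored so that the nonnegative quadratic term $\tfrac{\lambda_k}{2}(2-\lambda_kL)\nm{\mathcal G_{\lambda_k f}(\cdots)}^2$ supplied by \cref{lem:key-Gf} exactly compensates the $\tfrac{\gamma_k}{2}\nm{v_{k+1}-v_k}^2$ contribution arising from expanding $\nm{v_{k+1}-x^*}^2-\nm{v_k-x^*}^2$, as in the PPA derivation. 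The residual $E_k$ then comprises a quadratic piece $\tfrac{(1+\alpha_k)\varepsilon_k^2}{2\lambda_k}$ from the $\tfrac{\varepsilon^2}{2\lambda}$ slack, plus two linear-in-error cross terms involving $\nm{\sigma}\leqslant \varepsilon_k$ and $\nm{e}\leqslant \tau_k$ paired against $\nm{v_{k+1}-x^*}$.

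The main obstacle is taming these linear-in-error cross terms, which requires the two-pass bootstrap already used in \cref{lem:conv-Lk-AIPPA}: first apply the recursion to obtain a rough bound on $\sqrt{\gamma_k/\beta_k}\,\nm{v_k-x^*}$ via the discrete Gr\"onwall-type argument (the same auxiliary lemma as in \cref{lem:conv-Lk-AIPPA}), then reinsert that bound to square the errors and absorb them into $\Omega_k^2$. In the present setting $\Omega_k$ must aggregate both the proximal errors $\varepsilon_i$ and the gradient errors $\tau_i$ with matching weights $\sqrt{\beta_{i+1}/\gamma_{i+1}}$; I expect this bookkeeping to be the most delicate step, since the gradient error enters on equal footing with the proximal one through the $(\sigma+e)$ term of \cref{lem:key-Gf}.

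Finally, since $\mu=0$ and $\gamma_0=L$, the recursion \cref{eq:gammak} together with \cref{eq:cond-step} and \cref{lem:gammak-ak} yields $\gamma_k=\Theta((k+1)^{-2})$ and $\beta_k=\gamma_k/\gamma_0$, so the bound takes the shape
$$\mathcal L_k\leqslant \frac{2\mathcal L_0}{(k+1)^2}+\frac{CL}{(k+1)^2}\sum_{i=0}^{k-1}(i+1)^2\bigl(\varepsilon_i^2+\tau_i^2\bigr)+\frac{CL}{(k+1)^2}\left(\sum_{i=0}^{k-1}(i+1)(\varepsilon_i+\tau_i)\right)^2.$$
Under \cref{eq:p-q} with $p,q>1$ the elementary power-sum estimates \cref{eq:1-p,eq:2-2p} split the sums into $T_k(p)$ and $T_k(q)$ contributions (the $p=2$ or $q=2$ case producing the logarithmic factor, and the other cases producing the $(k+1)^{2-2r}$ term), which yields \cref{eq:conv-AIPGM-mu-0} and completes the proof.
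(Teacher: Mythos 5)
Your proposal is correct and follows essentially the same route as the paper: the same Lyapunov function, the same one-step decrement obtained from \cref{lem:key-Gf} applied at $y=x_k$ and $y=x^*$ with the step-size condition \cref{eq:cond-step} absorbing the quadratic gradient-mapping term, the same two-pass discrete Gr\"onwall bootstrap to handle the linear-in-error cross terms (yielding the analogue of \cref{lem:conv-Lk-AIPPA}, which is exactly \cref{lem:conv-Lk}), and the same final step via \cref{lem:gammak-ak} and the power-sum estimates \cref{eq:1-p,eq:2-2p}. The only cosmetic difference is that the paper collapses the cross terms onto $\nm{v_k-x^*}$ using \cref{eq:AIPGM-yk} rather than $\nm{v_{k+1}-x^*}$; either choice is compatible with the Gr\"onwall lemma.
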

	\begin{rem}
		In the setting of \cref{thm:conv-AIPGM-mu-0} (except \cref{eq:p-q}), if $\{k(\varepsilon_k+\tau_k)\}_{k=0}^\infty$ is summable, then we can obtain 
		the rate $O(1/k^2)$ without log factor, which is promised by the estimate \cref{eq:bd-Lk-mu-0}. This also agrees with the result in \cite{attouch_fast_convergence_2018}, where the case $\varepsilon_k = 0$ has been considered.
	\end{rem}
	\begin{thm}
		\label{thm:conv-AIPGM-mu>0}
		Assume $f = h+g$ where $h\in\mathcal S_{\mu,L}^{1,1}$ 
		with $\mu>0$ and $g\in\mathcal S_0^0$.
		If $\gamma_0 = \mu$ and \cref{eq:p-q} holds true, 
		then for \cref{algo:AIPGM}, we have
		\begin{equation}\label{eq:conv-AIPGM-mu>0}
			\small
			f(x_k)-f(x^*)+\frac{\mu}{2}\nm{v_k-x^*}^2
			\leqslant 
			\frac{2\mathcal L_0}{(1+\sqrt{0.5\mu/L})^k} 
			+L\left(\frac{L}{\mu}\right)^2\left(\frac{C_{p}}{(k+1)^{2p}}
			+\frac{C_{q}}{(k+1)^{2q}}\right),
		\end{equation}	
		where $\mathcal L_0: = 
		f(x_{0})-f(x^*) + 
		\frac{\gamma_{0}}{2}
		\nm{v_{0}-x^*}^2$.	
	\end{thm}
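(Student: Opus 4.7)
The plan is to mirror the proof of \cref{thm:conv-AIPPA-mu>0}, replacing the PPA machinery by the perturbed gradient mapping from \cref{lem:key-Gf}. First I would exploit the hypothesis $\gamma_0=\mu$ together with \cref{lem:gammak-ak} to conclude that $\gamma_k\equiv\mu$ for all $k$, and then the step size condition \cref{eq:cond-step} collapses to $2L\alpha_k^2 = \mu(1+\alpha_k)$, forcing $\alpha_k\equiv\alpha_\mu$ with $\alpha_\mu\in\bigl[\sqrt{\mu/(2L)},\sqrt{\mu/L}\,\bigr]$ by \cref{eq:amu}. In particular the contraction factor is $\beta_k = (1+\alpha_\mu)^{-k}\leqslant (1+\sqrt{\mu/(2L)})^{-k}$, which accounts for the leading geometric term in \cref{eq:conv-AIPGM-mu>0}.

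Next I would derive the discrete Lyapunov descent analogous to \cref{lem:conv-Lk-AIPPA}. Applying \cref{lem:key-Gf} at $x=y_k$ with test points $y=x_k$ and $y=x^*$, and combining these through \cref{eq:AIPGM-yk,eq:AIPGM-vk1,eq:AIPGM-xk1} in a telescoping argument parallel to the PPA case, I expect a one-step inequality of the form
\[
\mathcal L_{k+1}-\mathcal L_k \leqslant -\alpha_k\mathcal L_{k+1} + \frac{1+\alpha_k}{2\lambda_k}\varepsilon_k^2 + \frac{\alpha_k}{\lambda_k}\bigl(\|\sigma_k\|+\|e_k\|\bigr)\|v_{k+1}-x^*\|,
\]
where $\sigma_k$ and $e_k$ are the proximal and gradient perturbations from \cref{lem:key-Gf}, controlled by $\varepsilon_k$ (up to a factor $\sqrt{1+\lambda_k\mu}$) and by $\tau_k$ respectively. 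Unrolling the recursion and invoking the same Gronwall-type trick used in \cref{eq:est-cross} then yields
\[
\mathcal L_k \leqslant 2\beta_k\bigl(\mathcal L_0 + \widetilde\Upsilon_k + \widetilde\Omega_k^2\bigr),
\]
with accumulators $\widetilde\Upsilon_k$ and $\widetilde\Omega_k$ now depending on both $\{\varepsilon_i\}$ and $\{\tau_i\}$, weighted by $(1+\alpha_\mu)^{i+1}$ and $(1+\alpha_\mu)^{(i+1)/2}$ respectively, exactly as in \cref{eq:key-mu>0}.

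Finally, under the decay \cref{eq:p-q} I would invoke \cref{lem:estk-It} to bound sums such as $\sum_{i=1}^{k}(1+\alpha_\mu)^i/(i+1)^{2p}$ and $\bigl(\sum_{i=1}^{k}(1+\alpha_\mu)^{i/2}/(i+1)^p\bigr)^2$ by a constant multiple of $(1+\alpha_\mu)^k/(k+1)^{2p}$, and analogously for the $\tau_k$-contribution with exponent $2q$. Dividing by $(1+\alpha_\mu)^k$ and tracking the constants, the prefactor becomes $L(L/\mu)^2$ after accounting for $1/\lambda_k=L$ and the lower bound $\alpha_\mu\geqslant\sqrt{\mu/(2L)}$, delivering exactly the shape of \cref{eq:conv-AIPGM-mu>0}.

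The main obstacle will be the simultaneous handling of two independent error sources in \cref{lem:key-Gf}: the proximal residual $\sigma_k$ and the inexact gradient residual $e_k$. In the PPA analysis only $\xi_{k+1}$ appears, whereas here the cross term $\langle e_k,v_{k+1}-x^*\rangle$ must be absorbed into the Gronwall step in parallel with $\sigma_k$, and the sharp tracking of the $L/\mu$ dependence requires using $\lambda_k=1/L$ together with the lower bound on $\alpha_\mu$ at the right moments so that the final constant scales as $(L/\mu)^2$ rather than a looser power. The descent step $x_{k+1} = y_k - \lambda_k \mathcal G_{\lambda_k f}(y_k,\tau_k,\varepsilon_k)$ in \cref{eq:AIPGM-xk1}, together with the $\tfrac{\lambda}{2}(2-\lambda L)\|\mathcal G\|^2$ term in \cref{lem:key-Gf}, should absorb precisely the amount needed to close the telescoping.
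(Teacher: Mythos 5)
Your proposal follows essentially the same route as the paper: the paper likewise reduces to $\gamma_k\equiv\mu$, $\alpha_k\equiv\alpha_\mu$, $\beta_k=(1+\alpha_\mu)^{-k}$, establishes the one-step Lyapunov inequality via \cref{lem:key-Gf} with test points $x_k$ and $x^*$ (packaged as \cref{lem:conv-Lk}, with the combined perturbation $\xi_k=\sigma_k+e_k$ and the $\|\mathcal G\|^2$ term absorbing the Young-inequality remainder), runs the discrete Gronwall step of \cref{eq:est-cross}, and then invokes \cref{lem:estk-It} together with $\tfrac12\alpha_\mu\leqslant\ln(1+\alpha_\mu)$ and $\alpha_\mu\geqslant\sqrt{0.5\mu/L}$ to extract the $L(L/\mu)^2$ constant. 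The only cosmetic deviations are that the cross term actually involves $\|v_k-x^*\|$ rather than $\|v_{k+1}-x^*\|$ (a consequence of the extrapolation \cref{eq:AIPGM-yk}) and that the additive error term also carries $\tau_k^2$; neither affects the argument.
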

	\begin{rem}\label{rem:err-exp}
		Under the assumption of \cref{thm:conv-AIPGM-mu>0} (except \cref{eq:p-q}), if $	\sum_{k=0}^{\infty}(1+\alpha_\mu)^{k/2}(\varepsilon_k+\tau_k)<\infty$,
		then by \cref{eq:bd-mu>0}, we can recover the accelerated linear rate $O((1+\sqrt{0.5\mu/L})^{-k})$.
	\end{rem}
	In the sequel, we shall use the sequence $\{\beta_k\}_{k=0}^{\infty}$ 
	define by \cref{eq:betak}. With
	the choice \cref{eq:cond-step}, the asymptotic behaviour 
	of $\beta_k$ has been given in \cref{lem:gammak-ak}.
	Let 
	\begin{equation}\label{eq:xik}
		\xi_k =\sigma_k+e_k,
	\end{equation}
	where 
	$e_k = \lambda_k\big(\nabla h(y_k)-\widehat{\nabla }h(y_k)\big)$ satisfies $\nm{e_k}\leqslant  \tau_k$ and
	\[
	\sigma_k = x_{k+1}-\proxi_{\lambda_k g}(x_k-\lambda_k\widehat{\nabla}h(x_k)),
	\]
	which, by \cref{lem:key-type1}, admits the estimate $\nm{\sigma_k}
	\leqslant \varepsilon_k$. Hence, we get 
	\begin{equation}\label{eq:est-xik}
		\nm{\xi_k}\leqslant \varepsilon_k+\tau_k.
	\end{equation}
	In addition, except for the Lyapunov function \cref{eq:Lk}, we define
	\[
	\left\{
	\begin{split}
		{}&\Upsilon_0 := 0,\,\Upsilon_k:={}L
		\sum_{i=0}^{k-1}\frac{2}{\beta_{i+1}}
		(\varepsilon_i^2+\tau_i^2), \quad k\geqslant 1,\\
		{}&	\Omega_0={}0,\quad
		\Omega_k = L\sum_{i=0}^{k-1}\frac{\alpha_i}{\sqrt{\beta_{i}\gamma_{i}}}(\varepsilon_i+\tau_i),
		\quad k\geqslant 1.
	\end{split}
	\right.
	\]
	\begin{lem}\label{lem:conv-Lk}
		For \cref{algo:AIPGM}, we have 
		\begin{equation}\label{eq:conv-Lk}
			\mathcal L_k\leqslant 2\beta_k
			\left(\mathcal L_0+\Upsilon_k+
			\Omega_k^2
			\right).
		\end{equation}
	\end{lem}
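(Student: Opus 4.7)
The plan is to mirror the strategy used to establish \cref{lem:conv-Lk-AIPPA}, replacing the role of the inexact proximal mapping with the perturbed gradient mapping $\mathcal G_{\lambda_k f}$ and using \cref{lem:key-Gf} in place of \cref{lem:key-type1}. The Lyapunov function is identical, so the overall flow (per-step difference inequality $\to$ one-step contraction $\to$ unrolled bound $\to$ discrete Gr\"onwall) carries over with only the error-handling modified to accommodate both $\varepsilon_k$ and $\tau_k$.

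First, I would apply \cref{lem:key-Gf} at the point $x=y_k$ with parameters $(\lambda_k,\tau_k,\varepsilon_k)$ twice: once with $y=x^*$ to upper-bound $-\langle \mathcal G_{\lambda_k f}(y_k,\tau_k,\varepsilon_k), y_k - x^*\rangle$ in terms of $f(x^*)-f(x_{k+1})$, the $(\mu/2)\|y_k-x^*\|^2$ term, the gradient-mapping descent term $\tfrac{\lambda_k}{2}(2-\lambda_k L)\|\mathcal G_{\lambda_k f}\|^2$ (which equals $\tfrac{1}{2L}\|\mathcal G_{\lambda_k f}\|^2$ since $\lambda_k=1/L$), and the inner product $\tfrac{1}{\lambda_k}\langle \xi_k, x^*-x_{k+1}\rangle$; and once with $y=x_k$ to handle the $f(x_{k+1})-f(x_k)$ part. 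Combining these with the identity $x_{k+1}=y_k-\lambda_k\mathcal G_{\lambda_k f}(y_k,\tau_k,\varepsilon_k)$ and the relation \cref{eq:AIPGM-vk1} between $v_{k+1}-v_k$ and $\mathcal G_{\lambda_k f}$, together with the step-size balance $2L\alpha_k^2=\gamma_k(\alpha_k+1)$, the quadratic gradient-mapping term and the $(\gamma_k/2)\|v_{k+1}-v_k\|^2$ term should match and absorb. Following the bookkeeping of \cite[Theorem 3.1]{luo_chen_differential_2019} exactly as in \cref{lem:conv-Lk-AIPPA}, this should yield a one-step inequality
\begin{equation*}
\mathcal L_{k+1}-\mathcal L_k \leqslant -\alpha_k\mathcal L_{k+1} + \frac{L(1+\alpha_k)}{2}\bigl(\varepsilon_k^2+\tau_k^2\bigr) + L\alpha_k(\varepsilon_k+\tau_k)\nm{v_{k+1}-x^*},
\end{equation*}
after dropping nonpositive surplus terms, using $\nm{\xi_k}\leqslant \varepsilon_k+\tau_k$ from \cref{eq:est-xik}, and Cauchy--Schwarz on the cross term.

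Next, dividing by $\beta_{k+1}/\beta_k = 1/(1+\alpha_k)$ and telescoping produces
\begin{equation*}
\mathcal L_k \leqslant \beta_k\bigl(\mathcal L_0+\Upsilon_k\bigr) + \beta_k\sum_{i=0}^{k-1}\frac{L\alpha_i}{\beta_i}(\varepsilon_i+\tau_i)\nm{v_{i+1}-x^*},
\end{equation*}
which in particular bounds $(\gamma_k/2)\nm{v_k-x^*}^2$ by the same right-hand side. I then apply \cref{lem:Gronwall-discrete} to deduce $\sqrt{\gamma_k/\beta_k}\nm{v_k-x^*}\leqslant \sqrt{2(\mathcal L_0+\Upsilon_k)}+2\Omega_k$. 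Substituting this back into the cross-term sum, precisely as in \cref{eq:est-cross}, and using $2ab\leqslant a^2+b^2$, the cross term is dominated by $2\Omega_k^2+\mathcal L_0+\Upsilon_k$, giving \cref{eq:conv-Lk}.

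The main obstacle I anticipate is the first step: organizing the per-step inequality so that all quadratic terms in $\mathcal G_{\lambda_k f}$ and $v_{k+1}-v_k$ cancel cleanly. The scheme uses $y_k$ rather than $x_{k+1}$ for both the strong-convexity anchor and the update to $v_{k+1}$, which creates bookkeeping subtleties that do not appear in \cref{lem:conv-Lk-AIPPA}; the relation $x_{k+1}=y_k-\lambda_k\mathcal G_{\lambda_k f}$ and the step-size identity $2L\alpha_k^2=\gamma_k(\alpha_k+1)$ are what ultimately make the cancellation go through, and they must be invoked at the right moment. Once the one-step inequality is obtained, the subsequent unrolling and Gr\"onwall argument are word-for-word parallel to the proof of \cref{lem:conv-Lk-AIPPA}, so no further difficulty is expected.
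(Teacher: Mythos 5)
Your proposal follows the paper's proof in all essentials: the same Lyapunov function, two applications of \cref{lem:key-Gf} (at $y=x_k$ and at $y=x^*$), cancellation of the quadratic terms via $x_{k+1}=y_k-\lambda_k\mathcal G_{\lambda_k f}(y_k,\tau_k,\varepsilon_k)$ and the balance $2L\alpha_k^2=\gamma_k(\alpha_k+1)$, then telescoping, \cref{lem:Gronwall-discrete}, and reuse of the argument of \cref{eq:est-cross}. Two details of your claimed one-step inequality do not survive the actual algebra, however. First, the error cross terms produced by \cref{lem:key-Gf} combine to $\frac{1}{\lambda_k}\dual{\xi_k,\,x_{k+1}-x_k+\alpha_k(x_{k+1}-x^*)}$; since it is $y_k$, not $x_{k+1}$, that satisfies $(1+\alpha_k)y_k=x_k+\alpha_k v_k$ (i.e.\ \cref{eq:AIPGM-yk}), one substitutes $x_{k+1}=y_k-\lambda_k\mathcal G_{\lambda_k f}$ and obtains $\frac{\alpha_k}{\lambda_k}\dual{\xi_k,v_k-x^*}$ plus a residual $(1+\alpha_k)\dual{\xi_k,\mathcal G_{\lambda_k f}}$; hence the cross term in \cref{eq:1st-step} carries $\nm{v_k-x^*}$, not $\nm{v_{k+1}-x^*}$, which is precisely why $\Omega_k$ in this section is defined with $\alpha_i/\sqrt{\beta_i\gamma_i}$ rather than with the shifted index $i+1$ as in the PPA case. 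Second, the residual $(1+\alpha_k)\dual{\xi_k,\mathcal G_{\lambda_k f}}$ must be split by Young's inequality, which adds $(1+\alpha_k)\nm{\xi_k}^2/\lambda_k$ to the error (so the per-step error constant is of order $2L(1+\alpha_k)$, not $L(1+\alpha_k)/2$) and leaves a term $\frac{\lambda_k}{4}(1+\alpha_k)\nm{\mathcal G_{\lambda_k f}}^2$; absorbing the latter together with $\frac{\alpha_k^2}{2\gamma_k}\nm{\mathcal G_{\lambda_k f}}^2$ is exactly the reason for the factor $2$ in \cref{eq:cond-step}, a point your sketch glosses over. Neither correction derails the final bound --- the definitions of $\Upsilon_k$ and $\Omega_k$ preceding the lemma are tailored to the corrected inequality --- but as written your one-step inequality is not the one the derivation produces.
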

	\begin{proof}
		Let us first establish
		\begin{equation}\label{eq:1st-step}
			\mathcal L_{k+1}-\mathcal L_{k}
			\leqslant {}-\alpha_k\mathcal L_{k+1}+\frac{\alpha_k}{\lambda_k}
			\nm{\xi_k}	\nm{v_{k}-x^*}
			+	\frac{2}{\lambda_k}
			(1+\alpha_k)(\varepsilon_k^2
			+\tau_k^2).
		\end{equation}
		
		Thanks to the formulation \cref{eq:AIPGM-xk1}, the estimate 
		of $	\mathcal L_{k+1}-	\mathcal L_{k} $ is more or less parallel to that of 
		\cite[Lemma 3]{luo_chen_differential_2019}. Following the proof, we can bound the difference by that
		\[
		\begin{split}
			\mathcal L_{k+1}-\mathcal L_{k}
			\leqslant {}& f(x_{k+1})-f(x_k)-
			\frac{\alpha_k\gamma_{k+1}}{2}
			\nm{v_{k+1}-x^*}^2\\
			{}& \quad+\frac{\alpha_k^2}{2\gamma_k}\nm{\mathcal G_{\lambda_k f}(y_k,\tau_k,\varepsilon_k)}^2- \dual{ \mathcal G_{\lambda_k f}(y_k,\tau_k,\varepsilon_k), y_{k} - x_{k}} \\
			{}&\qquad +\frac{\mu\alpha_k}{2}
			\nm{y_{k}-x^{*}}^2- \alpha_k\dual{  \mathcal G_{\lambda_k f}(y_k,\tau_k,\varepsilon_k), y_{k} - x^{*}}.
		\end{split}
		\]
		Invoking \cref{lem:key-Gf}, we have
		\[
		\begin{split}
			f(x_{k+1})-f(x_k) 
			\leqslant{}&\frac{\varepsilon_k^2}{2\lambda_k}+
			\dual{ \mathcal G_{\lambda_k f}(y_k,\tau_k,\varepsilon_k), y_{k} - x_{k}}
			+\frac{1}{\lambda_k}\dual{ \xi_k,x_{k+1}-x_k}
			\\
			{}&\qquad	-\frac{\mu}{2}\nm{y_k-x_k}^2-\frac{\lambda_k}{2}
			\nm{\mathcal G_{\lambda_k f}(y_k,\tau_k,\varepsilon_k)}^2,\\
			f(x_{k+1})-f(x^*)
			\leqslant{}&\frac{\varepsilon_k^2}{2\lambda_k}
			+\dual{ \mathcal G_{\lambda_k f}(y_k,\tau_k,\varepsilon_k), y_{k} - x^*} +\frac{1}{\lambda_k}\dual{ \xi_k,x_{k+1}-x^*}\\
			{}&\quad
			-\frac{\mu}{2}\nm{y_k-x^*}^2
			- \frac{\lambda_k}{2}
			\nm{\mathcal G_{\lambda_k f}(y_k,\tau_k,\varepsilon_k)}^2,
		\end{split}
		\]
		where $\xi_k$ is defined by \cref{eq:xik}. Hence, 
		dropping surplus negative square  terms yields that
		\[
		\begin{split}
			\mathcal L_{k+1}-\mathcal L_{k}
			\leqslant {}&-\alpha_k\mathcal L_{k+1}+
			\frac{1+\alpha_k}{2\lambda_k}\varepsilon_k^2
			+\frac{1}{\lambda_k}\dual{ \xi_k,x_{k+1}-x_k}
			+\frac{\alpha_k}{\lambda_k}\dual{ \xi_k,x_{k+1}-x^*}\\
			{}&\quad+\frac{\alpha_k^2}{2\gamma_k}\nm{\mathcal G_{\lambda_k f}(y_k,\tau_k,\varepsilon_k)}^2-\frac{(1+\alpha_k)\lambda_k}{2}
			\nm{\mathcal G_{\lambda_k f}(y_k,\tau_k,\varepsilon_k)}^2.
		\end{split}
		\]
		Recalling the relation $x_{k+1}-y_k = \lambda_k \mathcal G_{\lambda_k f}(y_k,\tau_k,\varepsilon_k)$, the cross terms are bounded as follows
		\[
		\begin{split}
			{}&	\frac{1}{\lambda_k}\dual{ \xi_k,x_{k+1}-x_k}
			+\frac{\alpha_k}{\lambda_k}\dual{ \xi_k,x_{k+1}-x^*}\\
			={}&\frac{1}{\lambda_k}\dual{ \xi_k,y_{k}-x_k}
			+\frac{\alpha_k}{\lambda_k}\dual{ \xi_k,y_{k}-x^*}
			+(1+\alpha_k)\dual{\xi_k,\mathcal G_{\lambda_k f}(y_k,\tau_k,\varepsilon_k)}\\
			\leqslant {}&\frac{1}{\lambda_k}\dual{ \xi_k,y_{k}-x_k}
			+\frac{\alpha_k}{\lambda_k}\dual{ \xi_k,y_{k}-x^*}
			+(1+\alpha_k)\left(\frac{\lambda_k}{4}
			\nm{\mathcal G_{\lambda_k f}(y_k,\tau_k,\varepsilon_k)}^2
			+\frac{\nm{\xi_k}^2}{\lambda_k}\right)\\	
			= {}&\frac{\alpha_k}{\lambda_k}\dual{ \xi_k,v_{k}-x^*}
			+\frac{\lambda_k}{4}(1+\alpha_k)
			\nm{\mathcal G_{\lambda_k f}(y_k,\tau_k,\varepsilon_k)}^2
			+(1+\alpha_k)\frac{\nm{\xi_k}^2}{\lambda_k},
		\end{split}
		\]
		where in the last step, we used \cref{eq:AIPGM-yk}. Consequently, by \cref{eq:cond-step,eq:est-xik}, we get
		\[
		\begin{split}
			\mathcal L_{k+1}-\mathcal L_{k}
			\leqslant {}&-\alpha_k\mathcal L_{k+1}+\frac{\alpha_k}{\lambda_k}
			\dual{ \xi_k,v_{k}-x^*}+
			\frac{1+\alpha_k}{\lambda_k}(\varepsilon_k^2+\nm{\xi_k}^2)\\
			{}&\quad+\frac{1}{4\gamma_k}
			\left(2\alpha_k^2
			-\lambda_k\gamma_k(\alpha_k+1)
			\right)
			\nm{\mathcal G_{\lambda_k f}(y_k,\tau_k,\varepsilon_k)}^2\\
			\leqslant {}&-\alpha_k\mathcal L_{k+1}+\frac{\alpha_k}{\lambda_k}
			\nm{\xi_k}\nm{v_{k}-x^*}
			+	\frac{2}{\lambda_k}
			(1+\alpha_k)(\varepsilon_k^2
			+\tau_k^2),
		\end{split}
		\]
		which proves \cref{eq:1st-step}.
		
		Now, using \cref{lem:Gronwall-discrete} and adopting the proof of \cref{eq:est-cross}, 
		we can obtain \cref{eq:conv-Lk} and thus conclude the proof of this lemma.
	\end{proof}
	\medskip\noindent{\bf Proof of \cref{thm:conv-AIPGM-mu-0}.}
	Since $\mu=0$ and $\gamma_0=L$, by \cref{lem:gammak-ak}, we have 
	\[
	\frac{2}{(k + \sqrt{2})^2}
	\leqslant \beta_k\leqslant\frac{8}{(k + 2\sqrt{2})^2}\quad\text{and}\quad
	\frac{L\alpha_k}{\sqrt{\beta_k\gamma_k}}\leqslant \sqrt{2L}(k+\sqrt{2}).
	\]
	Therefore, it follows that
	\[
	\Upsilon_k
	\leqslant{} 4L\sum_{i=0}^{k-1} \big(i+1\big)^2\left(\varepsilon_i^2+\tau_i^2\right)
	\quad\text{and}\quad
	\Omega_k\leqslant {}2\sqrt{L}\sum_{i=0}^{k-1} \big(i+1\big)\left(\varepsilon_i+\tau_i\right),
	\]
	and by \cref{lem:conv-Lk} we see the bound
	\begin{equation}\label{eq:bd-Lk-mu-0}
		\small
		\mathcal L_k\leqslant 
		\frac{16}{(k + 2\sqrt{2})^2}
		\left[\mathcal L_0+
		4L\sum_{i=0}^{k-1} \big(i+1\big)^2\left(\varepsilon_i^2+\tau_i^2\right)
		+4L\left(\sum_{i=0}^{k-1} \big(i+1\big)\left(\varepsilon_i+\tau_i\right)\right)^2
		\right].
	\end{equation}
	According to the decay assumption \cref{eq:p-q} and the previous two estimates \cref{eq:1-p,eq:2-2p}, we obtain the desired result \cref{eq:conv-AIPGM-mu-0} from \cref{eq:bd-Lk-mu-0} and thus 
	finish the proof of \cref{thm:conv-AIPGM-mu-0}. \qed
	\vskip0.1cm
	\medskip\noindent{\bf Proof of \cref{thm:conv-AIPGM-mu>0}.} 
	As $\gamma_0=\mu>0$, we have
	\begin{equation}\label{eq:bd-mu>0}
		\Upsilon_k
		\leqslant{} 2L\sum_{i=1}^{k} \big(1+\alpha_{\mu}\big)^{i}\left(\varepsilon_{i+1}^2+\tau_{i+1}^2\right)
		\quad\text{and}\quad
		\Omega_k\leqslant {}\sqrt{L}\sum_{i=0}^{k-1} \big(1+\alpha_{\mu}\big)^{i/2}\left(\varepsilon_i+\tau_i\right),
	\end{equation}
	By \cref{lem:estk-It}, it holds that
	\[
	\begin{split}
		\sum_{i=1}^{k} (1+\alpha_\mu)^{i}\varepsilon_{i+1}^2\leqslant{}&
		\sum_{i=1}^{k} \frac{(1+\alpha_\mu)^{i}}{(i+1)^{2p}}
		\leqslant  \frac{C_p(1+\ln(1+\alpha_\mu))}{\snm{\ln(1+\alpha_\mu)}^2}
		\cdot\frac{(1+\alpha_\mu)^{k+1}}{(k+1)^{2p}},
	\end{split}
	\]
	and then we use the fact \cref{eq:amu} and the trivial estimate $0.5\alpha_\mu\leqslant \ln(1+\alpha_\mu)\leqslant \alpha_{\mu}\leqslant 1$ to get that
	\[
	\begin{split}
		\sum_{i=1}^{k} (1+\alpha_\mu)^{i}\varepsilon_{i+1}^2\leqslant{}&
		C_{p}(1+\alpha_{\mu})^{k}\cdot\frac{L/\mu}{(k+1)^{2p}}.
	\end{split}
	\]
	An anlaougous argument implies 
	\[
	\sum_{i=0}^{k-1} (1+\alpha_\mu)^{i/2}\varepsilon_{i}\leqslant{}
	C_{p}(1+\alpha_{\mu})^{k/2}\cdot\frac{L/\mu}{(k+1)^{p}},
	\]
	and similar bounds related to $\tau_i$ can be obtained.
	Combining these estimates with \cref{lem:conv-Lk} proves 
	\cref{eq:conv-AIPGM-mu>0} and concludes the proof of 
	\cref{thm:conv-AIPGM-mu>0}. \qed
	\section{Numerical Results}
	\label{sec:num}
	In this part, we conduct two numerical experiments to illustrate the performance of our \cref{algo:AIPGM} with perturbed gradients. 
	Namely, we take
	\begin{equation}\label{eq:err}
		\varepsilon_k = 0,\quad \tau_k = \frac{\tau}{(k+1)^p},\quad \tau>0,\,0<p\leqslant \infty,
	\end{equation}
	where $p = \infty$ means $\tau_ k= 0$. To verify the claim in \cref{rem:err-exp} for strongly convex case, we also consider 
	\begin{equation}\label{eq:err-exp}
		\varepsilon_k = 0,\quad \tau_k = \frac{(k+1)^{-p}}{(1+\alpha_\mu)^{k/2}},\quad 1<p\leqslant \infty,
	\end{equation}
	where $\alpha_\mu$ is defined by \cref{eq:amu}.
	We will report the results of a quadratic programming and the Lasso problem. For both two cases, we run \cref{algo:AIPGM} with enough iterations to obtain a convincible minimum $f^*$.
	\vskip0.1cm
	\noindent$\bullet${\,\bf Quadratic programming.}
	Consider 
	\begin{equation}\label{eq:QP}
		\min_{x\in \R^n}\,\frac{1}{2}x^{\top}Ax - b^\top x\quad {\rm s.t.~}l\leqslant x\leqslant u,
	\end{equation}
	where $b,\,l,\,u\in\R^n$ and $A\in\R^{n\times n}$ is symmetric positive semidefinite. We first choose $A = Q^{\top}Q$, where $Q\in\R^{n\times n}$ is generated from the uniformly distribution on $(0,1)$. In \cref{algo:AIPGM}, we take $\mu=0$ and estimate $L$ by the sum of the diagonal components of $A$. Numerical outputs with $n = 400$ and $n = 800$ are plotted in \cref{fig:QP-mu0}, from which we observed the {\it local} fast decay rate $O(k^{-\min\{2,2p\}})$, even for $p<1$. But small $p$ ($\leqslant 0.5$) does not promise convergence (one may expect convergence for sufficient large steps). However, our \cref{thm:conv-AIPGM-mu-0} only gives the  global rate $O(k^{-\min\{2,2p-2\}})$ for $p>1$, which is pessimistic compared with the numerical results.
	\begin{figure}[H]
		\centering
		\includegraphics[width=420pt,height=180pt]{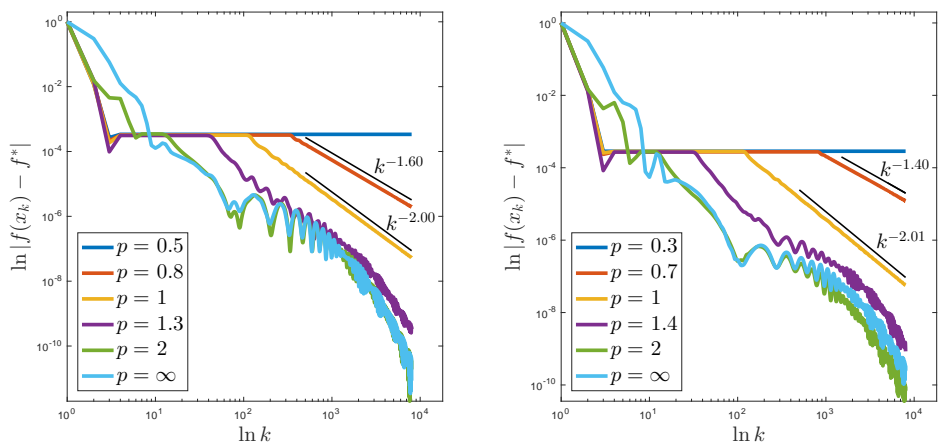}
		\caption{Convergence rate of \cref{algo:AIPGM} for problem \cref{eq:QP} with perturbation \cref{eq:err} ($\tau=1$).}
		\label{fig:QP-mu0}
	\end{figure}
	We then consider a sparse and symmetric positive definite matrix $A$ with size $n = 1089$. It is obtained from the finite element discretization of the Poisson equation on $[0,1]^2$ using piecewise continuous linear polynomials. For \cref{algo:AIPGM}, we set $\mu=\lambda_{\min}(A)$ and $L = \lambda_{\max}(A)$. We consider two kinds of perturbations \cref{eq:err} ($\tau=1$) and \cref{eq:err-exp} and report the results in \cref{fig:QP-mu}. For those two cases, the algebraic rate $O(k^{-2p})$ and the linear rate are observed respectively, which agree with \cref{thm:conv-AIPGM-mu>0,rem:err-exp}.
	\begin{figure}[H]
		\centering
		\includegraphics[width=420pt,height=180pt]{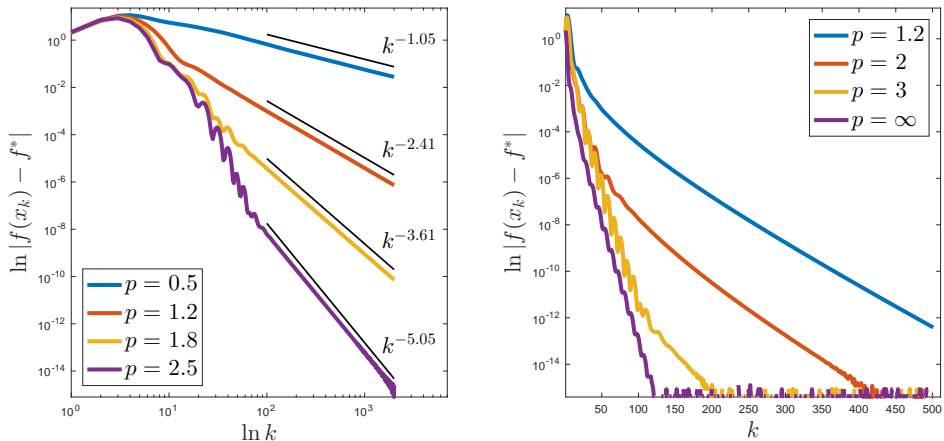}
		\caption{Convergence behaviour of \cref{algo:AIPGM} for \cref{eq:QP} with perturbations \cref{eq:err} (left) and \cref{eq:err-exp} (right).}
		\label{fig:QP-mu}
	\end{figure}
	\vskip0.1cm
	\noindent$\bullet${\,\bf Lasso.} We then focus on the well-known least absolute shrinkage and selection operator (Lasso) problem 
	\begin{equation}\label{eq:Lasso}
		\min_{x\in \R^n}\,\frac{1}{2}\nm{Ax-b}^2 +\rho \nm{x}_{l^1},
	\end{equation}
	where $\rho>0,\,b\in\R^{m},\,A\in\R^{m\times n}$ and $m\ll n$. We generate $A$ from standard normal distribution and take $b = Ay+e$, where $e$ denotes the noise and $y$ is sparse with $s$ nonzero components. The parameter is chosen as $\rho = 0.5$. For \cref{algo:AIPGM}, we set $\mu=0$ and take $L$ as the sum of the diagonal components of $A^{\top}A$. Even though the perturbation \cref{eq:err} is decreasing, the errors in the few initial steps are large. Therefore, to observe the local convergence rate, we set $\tau = 1e$-2.
	
	From numerical results in \cref{fig:Lasso}, we find that (i) similar with the previous example (cf. \cref{fig:QP-mu0}), small $p$ ($<0.5$) cannot promise convergence, and (ii) after a large number of iterations, the perturbations become small and the local decay rate $O(k^{-2})$ arises. This together with the previous test for quadratic programming \cref{eq:QP} shows the fast local convergence rate which is better than the global one $O(k^{-\min\{2,2p-2\}})$ given by \cref{thm:conv-AIPGM-mu-0}. 
	\begin{figure}[H]
		\centering
		\includegraphics[width=430pt,height=150pt]{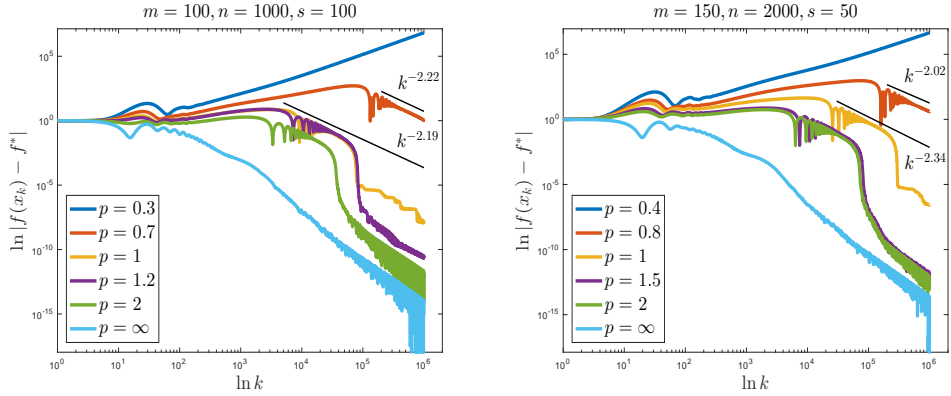}
		\caption{Convergence behaviour of \cref{algo:AIPGM} for the Lasso problem \cref{eq:Lasso} with perturbation \cref{eq:err} ($\tau = 1e$-2).}
		\label{fig:Lasso}
	\end{figure}
	
	\appendix
	\section{Some Auxiliary Estimates}
	In this section, we shall present some basic estimates. 
	We first cite an important Gronwall-type inequality; 
	see \cite[Proposition 1.2]{barbu_differential_2016}.
	\begin{lem}[\cite{barbu_differential_2016}]
		\label{lem:Gronwall}
		Let $T>0$ and assume that $y\in C[0,T]$ and $w\in L^1(0,T)$.
		If 
		\[
		\frac{1}{2}y^2(t)\leqslant 
		\frac{1}{2}c^2+\int_{0}^{t}w(s)y(s)
		\dd s,
		\quad 0<t\leqslant T,
		\]
		where $c\in\mathbb R$, then 
		\[
		\snm{y(t)}\leqslant 	\snm{c}+\int_{0}^{t}\snm{w(s)}\mathrm ds,
		\quad 0<t\leqslant T.
		\]	
	\end{lem}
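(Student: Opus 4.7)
The plan is to prove the estimate by a standard regularization argument: smooth the would-be square root on the right-hand side so that it becomes differentiable, derive a clean differential inequality, integrate, and then pass to the limit. I treat $y$ as the given continuous function and work with the absolutely continuous quantity on the right.

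First, set $\phi(t):=\tfrac{1}{2}c^{2}+\int_{0}^{t}w(s)y(s)\dd s$. Since $w\in L^{1}(0,T)$ and $y\in C[0,T]$, $\phi$ is absolutely continuous on $[0,T]$ with $\phi'(t)=w(t)y(t)$ almost everywhere, and the hypothesis reads $\tfrac{1}{2}y^{2}(t)\leqslant \phi(t)$, which in particular gives $\phi(t)\geqslant 0$. For $\varepsilon>0$, introduce the smoothed bound $g_{\varepsilon}(t):=\sqrt{2\phi(t)+\varepsilon}$. Then $g_{\varepsilon}$ is absolutely continuous and strictly positive on $[0,T]$, and $\snm{y(t)}\leqslant \sqrt{2\phi(t)}\leqslant g_{\varepsilon}(t)$. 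Differentiating, we obtain $g_{\varepsilon}'(t)=w(t)y(t)/g_{\varepsilon}(t)$ almost everywhere, so
\[
\snm{g_{\varepsilon}'(t)}\leqslant \snm{w(t)}\cdot\frac{\snm{y(t)}}{g_{\varepsilon}(t)}\leqslant \snm{w(t)}.
\]
Integrating from $0$ to $t$ and using $g_{\varepsilon}(0)=\sqrt{c^{2}+\varepsilon}$ yields $g_{\varepsilon}(t)\leqslant \sqrt{c^{2}+\varepsilon}+\int_{0}^{t}\snm{w(s)}\dd s$; combining this with $\snm{y(t)}\leqslant g_{\varepsilon}(t)$ and sending $\varepsilon\to 0^{+}$ produces the desired inequality.

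The main obstacle that the parameter $\varepsilon$ is introduced to overcome is the possible vanishing of $\phi$ (which occurs precisely when $y(t)=0$ and no positive mass has yet accumulated in $\int_{0}^{t}w y\,\dd s$): at such points the direct square root $\sqrt{2\phi}$ is not differentiable, and one cannot cleanly divide by $y$ to decouple the differential inequality either. Adding $\varepsilon>0$ inside the square root removes this singularity uniformly on $[0,T]$ at the cost of an $O(\sqrt{\varepsilon})$ shift in the initial value, which vanishes in the final limit. No further regularity of $w$ or $y$ is needed, so the argument works verbatim under the stated $C[0,T]\times L^{1}(0,T)$ hypothesis, and the resulting bound is sharp in the sense that equality holds when $w\geqslant 0$ and $y$ is chosen to saturate the assumption.
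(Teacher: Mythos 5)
Your argument is correct: the regularization $g_{\varepsilon}=\sqrt{2\phi+\varepsilon}$ is the standard device, the chain rule applies because the square root is $C^{1}$ on a neighbourhood of the range of $2\phi+\varepsilon\geqslant\varepsilon$ and $\phi$ is absolutely continuous, and letting $\varepsilon\to0^{+}$ recovers the stated bound. The paper gives no proof of this lemma --- it is quoted directly from \cite{barbu_differential_2016} --- and your proof is the standard one for this quadratic Gronwall inequality, so there is nothing further to reconcile.
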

	We also need a discrete version of \cref{lem:Gronwall}, 
	which can be found in \cite[Lemma 5.1]{attouch_fast_convergence_2018} 
	or \cite[Lemma A.4]{aujol_stability_2015}.
	\begin{lem}[\cite{attouch_fast_convergence_2018}]
		\label{lem:Gronwall-discrete}
		Let $\{a_{k}\}_{k=0}^\infty ,\,\{b_{k}\}_{k=0}^\infty $ and $\{c_{k}\}_{k=0}^\infty $ be three real sequences. If $\{c_{k}\}_{k=0}^\infty $ 
		is nondecreasing and for all $k\in\mathbb N$,
		\begin{equation*}\label{eq:discrete-Gronwall}
			a^2_{k}\leqslant	c_k^2 + \sum_{i=0}^{j_k}a_ib_i,
			\quad\text{ for some } 0\leqslant j_k\leqslant k,
		\end{equation*}
		then it holds that
		\begin{equation*}
			\snm{a_{k}} \leqslant\snm{c_k}+\sum_{i=0}^{j_k}\snm{b_i},
			\quad k\in\mathbb N.
		\end{equation*}
	\end{lem}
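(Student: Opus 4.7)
The plan is to reduce the nonlinear recursion to a quadratic inequality by the classical running-maximum device, and then sharpen from the trivial bound $\sum_{i=0}^{k}|b_i|$ to the stated $\sum_{i=0}^{j_k}|b_i|$ by applying the estimate one level deeper.

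First, since the hypothesis involves only $c_k^2$ and the conclusion only $|c_k|$, I would assume without loss of generality that $c_k\geqslant 0$. Using $a_ib_i\leqslant |a_i||b_i|$ the hypothesis becomes
\[
a_k^2\leqslant c_k^2 + \sum_{i=0}^{j_k}|a_i||b_i|.
\]
Introduce the running maximum $N_k:=\max_{0\leqslant i\leqslant k}|a_i|$. For any $m\leqslant k$, since $\{c_k\}$ is nondecreasing and $j_m\leqslant m\leqslant k$, the hypothesis gives $a_m^2\leqslant c_k^2 + N_k\sum_{i=0}^{k}|b_i|$. Taking the maximum over $m\leqslant k$ produces the quadratic inequality
\[
N_k^2\leqslant c_k^2 + N_k\,B_k,\qquad B_k:=\sum_{i=0}^{k}|b_i|.
\]
Solving for $N_k$ and using $\sqrt{B_k^2+4c_k^2}\leqslant B_k+2|c_k|$ yields $N_k\leqslant |c_k|+B_k$. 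This already proves the weaker bound $|a_k|\leqslant|c_k|+\sum_{i=0}^{k}|b_i|$ and, in fact, is exactly what is needed when $j_k=k$, which happens to be the case in every invocation of the lemma in the main text.

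To recover the sharper $\sum_{i=0}^{j_k}|b_i|$, I would apply the same argument at the truncated index $j_k$ in place of $k$. Since $\{c_k\}$ is nondecreasing we have $|c_{j_k}|\leqslant |c_k|$, and the derivation above (with $k$ replaced by $j_k$) gives
\[
N_{j_k}\leqslant |c_{j_k}|+\sum_{i=0}^{j_k}|b_i|\leqslant |c_k|+B,\qquad B:=\sum_{i=0}^{j_k}|b_i|.
\]
Plugging this back into the hypothesis at index $k$,
\[
a_k^2\leqslant c_k^2+N_{j_k}\cdot B\leqslant c_k^2+\bigl(|c_k|+B\bigr)B\leqslant |c_k|^2+2|c_k|B+B^2=\bigl(|c_k|+B\bigr)^2,
\]
so $|a_k|\leqslant |c_k|+\sum_{i=0}^{j_k}|b_i|$, as desired.

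The main obstacle is precisely the two-level structure created by the freedom in choosing $j_k$: a single application of the running-maximum trick only produces $\sum_{i=0}^{k}|b_i|$, because the max must be taken over an index set containing $a_k$ itself in order to absorb the left-hand side. Exploiting the monotonicity of $\{c_k\}$ to iterate the bound at the inner index $j_k$ is what closes the gap without invoking Gronwall inductively. Nothing else in the argument is delicate: the key inequality $\sqrt{X^2+4Y^2}\leqslant X+2Y$ for $X,Y\geqslant 0$ is elementary, and the sign of $a_ib_i$ is handled once and for all by passing to absolute values at the outset.
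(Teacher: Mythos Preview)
Your argument is correct. The paper does not actually supply a proof of this lemma; it merely cites \cite{attouch_fast_convergence_2018} and \cite{aujol_stability_2015}. The standard argument in those references is precisely your running-maximum trick: set $N_k=\max_{i\leqslant k}|a_i|$, derive the quadratic inequality $N_k^2\leqslant c_k^2+N_kB_k$, and solve it. Your second pass---applying the bound first at the inner index $j_k$ to control $N_{j_k}$ and then feeding this back into the hypothesis at $k$---is a clean way to recover the sharper right-hand side $\sum_{i=0}^{j_k}|b_i|$ rather than $\sum_{i=0}^{k}|b_i|$; this refinement is not spelled out in the cited sources, which typically state only the version with $j_k=k$ (or $k-1$). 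As you correctly observe, every invocation of the lemma in this paper is of that form anyway, so the distinction is cosmetic here, but your two-step argument does close the gap cleanly and without induction.
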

	
	Next,we present an estimate.
	\begin{lem}\label{lem:est-It}
		Assume that $p\geqslant 0$ and $A>1$, then for all $t>0$,
		\begin{equation}\label{eq:est-It}
			\int_0^t\frac{A^{s}}{(s+1)^p}\dd s
			\leqslant 
			\frac{C_p(1+\ln A)}{\snm{\ln A}^2}\cdot
			\frac{A^{t}}{(t+1)^p},
		\end{equation}
		where $C_p>0$ depends only on $p$.
	\end{lem}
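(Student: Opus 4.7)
The plan is to split the integration interval $[0,t]$ at its midpoint $s=t/2$, motivated by the fact that, since $A>1$, the factor $A^s$ concentrates the integrand's mass near the upper endpoint $s=t$. Each half will be handled by a different estimate, and both contributions will be compared against the claimed right-hand side $C_p(1+\ln A)(\ln A)^{-2}A^t(t+1)^{-p}$.

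For the upper piece $s\in[t/2, t]$, I would use the monotonicity of $s\mapsto(s+1)^{-p}$ for $p\geqslant 0$ to get $(s+1)^{-p}\leqslant ((t/2)+1)^{-p}\leqslant 2^p(t+1)^{-p}$, and then integrate the exponential directly:
\[
\int_{t/2}^{t}\frac{A^s}{(s+1)^p}\dd s
\leqslant \frac{2^p}{(t+1)^p}\cdot\frac{A^t-A^{t/2}}{\ln A}
\leqslant \frac{2^pA^t}{(\ln A)(t+1)^p},
\]
which is already absorbed into the target bound since $1/\ln A\leqslant(1+\ln A)/(\ln A)^2$ for every $A>1$.

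For the lower piece $s\in[0, t/2]$, I would apply the pointwise bound $A^s\leqslant A^{t/2}$ together with an explicit evaluation of $D_p(t):=\int_0^{t/2}(s+1)^{-p}\dd s$ (which is bounded by a constant for $p>1$, by $\ln(t+1)$ for $p=1$, and by $(t+1)^{1-p}/(1-p)$ for $0\leqslant p<1$). It then remains to compare $A^{t/2}D_p(t)$ with the target right-hand side, i.e.\ to verify that the auxiliary quantity
\[
\Phi_p(A,t):=\frac{(\ln A)^2(t+1)^p D_p(t)}{(1+\ln A)\,A^{t/2}}
\]
is uniformly bounded in $A>1$ and $t>0$ by a constant depending only on $p$. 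This is the main technical obstacle; setting $x=\ln A>0$ and $y=t+1\geqslant 1$, the question reduces to maximizing a smooth two-variable expression of the shape $x^2y^q e^{-(y-1)x/2}/(1+x)$. Here the factor $e^{-(y-1)x/2}$ delivers exponential decay that dominates any polynomial growth in $y$, while the prefactor $x^2/(1+x)$ controls the behaviour at both $x\to 0^+$ and $x\to\infty$; a direct computation of the critical point in $y$ for fixed $x$ (which yields $y^*=2q/x$ when feasible) then shows the required uniform bound. Combining the two pieces completes the proof.
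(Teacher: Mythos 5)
Your splitting of $[0,t]$ at the midpoint is a genuinely different route from the paper, which instead expands $A^{s+1}=\sum_{n\geqslant 0}((s+1)\ln A)^n/n!$ and integrates term by term; your treatment of the upper piece $[t/2,t]$ is correct and clean. The gap sits exactly where you flag the "main technical obstacle": the uniform boundedness of $\Phi_p(A,t)$ is \emph{false} for $p>2$. Carry out your own critical-point computation with $q=p$ and $D_p\leqslant 1/(p-1)$: at $y^*=2p/x$ the expression $x^2y^pe^{-x(y-1)/2}/(1+x)$ equals $(2p)^pe^{-p}\,x^{2-p}e^{x/2}/(1+x)$, which blows up as $x=\ln A\to 0^+$ once $p>2$. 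The exponential decay in $y$ does dominate polynomial growth for each fixed $x$, but the resulting supremum over $y$ is not uniformly bounded in $x$, so the last step of your argument cannot be completed.

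This is not a defect you could have repaired, because the lemma as stated is false for $p>2$: take $p=3$ and $A=e^{1/t}$; then the left-hand side is at least $\int_0^t(s+1)^{-3}\dd s\geqslant 3/8$ for $t\geqslant 1$, while the right-hand side equals $C_3\,e\,t/(t+1)^2\to 0$ as $t\to\infty$. (The paper's own proof breaks at the corresponding point: it discards $I_1=\sum_{n<[p]}\frac{(\ln A)^n}{n!}\int_0^t(s+1)^{n-p}\dd s$ on the grounds that it is nonpositive for $p>1$, yet every summand is a positive multiple of an integral of a positive function.) Your argument does yield the stated bound for $0\leqslant p\leqslant 2$, provided that for small $t$ you use $D_p(t)\leqslant t/2$ instead of the constant bound (otherwise the factor $x^2/(1+x)$ is unbounded as $x\to\infty$ with $y$ near $1$); and for each \emph{fixed} $A>1$ it gives the estimate with an $A$-dependent constant for every $p\geqslant 0$, which is what the paper's applications actually require, since they take $A=\sqrt e$ or $A=1+\alpha$ and allow the final constants to depend on $\alpha$ and $\mu$ --- although the explicit $(L/\mu)^2$ factor extracted in \cref{thm:conv-AIPGM-mu>0} relies on the precise $(1+\ln A)/\snm{\ln A}^2$ dependence and would need to be rechecked when $2p>2$.
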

	\begin{proof}
		It is trivial to verify \cref{eq:est-It} with $p=0$, so we consider $p>0$.
		Define
		\[
		I(t): = 	\int_0^t\frac{A^{s+1}}{(s+1)^p}\dd s,\quad t\geqslant0.
		\]
		Observing the infinite series expansion
		\[
		A^{s+1} = e^{(s+1)\ln A} = 	\sum_{n=0}^\infty \frac{((s+1)\ln A)^{n}}{n!},
		\]
		which is uniformly convergent over $[0,t]$, we can exchange the order of integral and summation to obtain that
		\begin{equation}
			\label{eq:It}
			I(t) = {}
			\int_{0}^{t}(s+1)^{-p}
			\sum_{n=0}^\infty \frac{((s+1)\ln A)^{n}}{n!}
			\dd s
			= \sum_{n=0}^\infty
			\frac{(\ln A)^{n}}{n!}
			\int_{0}^{t}(s+1)^{n-p}
			\dd s.
		\end{equation}
		
		Consider first that $p\notin\mathbb N$, then
		$n+1-p\neq0$ for all $n\in\mathbb N$. 
		By \cref{eq:It}, we have $	I(t)= I_1+I_2$, where 
		\[
		\left\{
		\begin{aligned}
			I_1:={}&
			\sum_{n=0}^{[p]-1}
			\frac{(\ln A)^n}{n!}\cdot
			\frac{(t+1)^{n+1-p}-1}{n+1-p},\\
			I_2:={}&
			\sum_{n=[p]}^{\infty}
			\frac{(\ln A)^n}{n!}\cdot
			\frac{(t+1)^{n+1-p}-1}{n+1-p},
		\end{aligned}	
		\right.
		\]
		where as usual $[p]$ denotes the integer part of $p$.
		Clearly, $I_1$ is nonpositive for $p>1$ and vanishes for $0<p<1$,
		and the second term $I_2$ is bounded by that
		\begin{equation}\label{eq:I2}
			\small
			I_2= {}\frac{1}{(t+1)^p\ln A}
			\sum_{n=[p]}^{\infty}
			\frac{((t+1)\ln A)^{n+1}}{(n+1)!}
			\cdot\frac{n+1}{n+1-p}
			\leqslant {}
			\frac{1+[p]}{(1+[p]-p)\ln A}
			\cdot
			\frac{A^{t+1}}{(t+1)^p}.
		\end{equation}
		Hence, we obtain
		\begin{equation}\label{eq:I-2}
			I(t)	\leqslant {}
			\frac{1+[p]}{(1+[p]-p)\ln A}
			\cdot
			\frac{A^{t+1}}{(t+1)^p}
		\end{equation}
		
		Then, we consider $p\in\mathbb N$. 
		From \cref{eq:It} we obtain that
		\begin{equation}
			\label{eq:I-3-temp}
			I(t)= I_2+I_3+\frac{(\ln A)^{p-1}}{(p-1)!}\ln (t+1),
		\end{equation}
		where 
		\[
		I_3:={}	\sum_{n=0}^{p-2}
		\frac{(\ln A)^n}{n!}\cdot
		\frac{(t+1)^{n+1-p}-1}{n+1-p}.
		\]
		As $I_3$ is zero for $p=1$ and nonpositive for $p>1$, we conclude from \cref{eq:I2,eq:I-3-temp} that
		\[
		I(t)\leqslant \frac{1+p}{\ln A}
		\cdot\frac{A^{t+1}}{(t+1)^p}
		+\frac{(\ln A)^{p-1}}{(p-1)!}\ln (t+1).
		\]
		Therefore, applying the elementary inequality
		\[
		\ln(t+1)
		\leqslant \frac{t+1}{e}\leqslant
		\frac{(p+1)^{p+1}}{e(e\ln A)^{p+1}}\cdot
		\frac{A^{t+1}}{(t+1)^{p}}\quad \forall\,t\geqslant 0,
		\]
		and thanks to Stirling's formula 
		\[
		\sqrt{2 \pi} (p-1)^{p-\frac{1}{2}} e^{1-p} \leqslant (p-1) !,
		\]
		for all $p\geqslant 2$, we obtain that
		\[
		I(t)\leqslant 
		\frac{C_p(1+\ln A)}{\snm{\ln A}^2}\cdot
		\frac{A^{t+1}}{(t+1)^p}.
		\]
		This together with \cref{eq:I-2} proves \cref{eq:est-It} and completes the proof of this lemma.
	\end{proof}
	A discrete version of \cref{lem:est-It} is given below.
	\begin{coro}\label{lem:estk-It}
		Assume $p\geqslant 0$ and $A>1$, then for all $k\geqslant 1$,
		\begin{equation*}
			\sum_{i=1}^{k}\frac{A^i}{(i+1)^p}\leqslant 
			\frac{C_p(1+\ln A)}{\snm{\ln A}^2}
			\cdot\frac{A^{k+1}}{(k+1)^p},
		\end{equation*}
		where $C_p>0$ depends only on $p$.
	\end{coro}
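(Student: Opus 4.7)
The plan is to reduce the discrete sum to the continuous integral already handled in \cref{lem:est-It}, via a standard step-function bound.

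First I would establish the one-interval comparison: for each $i \geqslant 1$ and every $s \in [i,i+1]$, one has $A^s \geqslant A^i$ because $A>1$, and $s^p \leqslant (i+1)^p$ because $p\geqslant 0$. Dividing gives
\[
\frac{A^s}{s^p} \;\geqslant\; \frac{A^i}{(i+1)^p}\qquad \text{for all } s\in[i,i+1],
\]
so integrating over $[i,i+1]$ yields $\frac{A^i}{(i+1)^p} \leqslant \int_i^{i+1}\frac{A^s}{s^p}\dd s$.

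Summing from $i=1$ to $k$ telescopes the intervals:
\[
\sum_{i=1}^{k}\frac{A^i}{(i+1)^p}
\;\leqslant\; \int_{1}^{k+1}\frac{A^s}{s^p}\dd s
\;=\; \int_{0}^{k}\frac{A^{t+1}}{(t+1)^p}\dd t
\;=\; A\int_{0}^{k}\frac{A^{t}}{(t+1)^p}\dd t,
\]
where the middle equality is the substitution $s=t+1$. Applying \cref{lem:est-It} with $t=k$ bounds the last integral by $\frac{C_p(1+\ln A)}{|\ln A|^2}\cdot \frac{A^k}{(k+1)^p}$, and absorbing the prefactor $A$ into $A^{k+1}=A\cdot A^k$ produces exactly the claimed bound.

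No real obstacle is expected; the only thing to be careful about is the correct orientation of the monotonicity inequalities (using the right endpoint $i+1$ in $(i+1)^p$ and the left endpoint $i$ in $A^i$) so that the step function lies \emph{below} the integrand, and the shift of index when translating $s\mapsto t+1$ so that the exponent $A^{k+1}$ on the right-hand side comes out correctly.
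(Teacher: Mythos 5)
Your proposal is correct and is essentially the paper's own argument: the paper bounds $\frac{A^i}{(i+1)^p}\leqslant\int_{i-1}^{i}\frac{A^{s+1}}{(s+1)^p}\dd s$ and sums to get $\int_{0}^{k}\frac{A^{s+1}}{(s+1)^p}\dd s$, which after your substitution $s=t+1$ is literally the same integral you obtain, and both then conclude by \cref{lem:est-It}. The only difference is the cosmetic choice of interval parametrization.
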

	\begin{proof}
		Note that
		\[
		\sum_{i=1}^{k}\frac{A^{i}}{(i+1)^p}\leqslant \sum_{i=1}^{k}\int_{i-1}^{i}\frac{A^{s+1}}{(s+1)^p}\dd s
		=\int_{0}^{k}\frac{A^{s+1}}{(s+1)^p}\dd s,
		\]
		and by \cref{lem:est-It} we obtain the desired result.
	\end{proof}
	
	To the end, we list a lemma that depicts the asymptotic behaviour of some 
	key sequences, and for detailed proof we refer to \cite[Lemma B2]{luo_chen_differential_2019}.
	\begin{lem}\label{lem:gammak-ak}
		Given $\gamma_0>0,\,Q>0$ and $q\geqslant 0$, define $\{\alpha_k\}_{k=0}^{\infty}$ 
		and $\{\gamma_k\}_{k=0}^{\infty}$ by that
		\[
		\left\{
		\begin{split}
			Q\alpha_k^2 = {}&\gamma_{k}(1+\alpha_k),\,\alpha_k>0,\\
			\gamma_{k+1} = {}&\gamma_k+\alpha_k(q-\gamma_{k+1}).
		\end{split}
		\right.
		\]
		Then we have the following.
		\begin{itemize}
			\item $\gamma_k>0,\,\min\{\gamma_0,q\}
			\leqslant \gamma_k\leqslant \max\{\gamma_0,q\}$ and $\gamma_k\to q$ as $k\to\infty$. 		
			\item $\min\{\alpha_{0},\alpha_{q}\}
			\leqslant \alpha_k\leqslant \max\{\alpha_{0},\alpha_{q}\}$ and $\alpha_k\to\alpha_q$ as $k\to\infty$, where $\alpha_q\geqslant 0$ satisfies $Q\alpha_q^2=q(1+\alpha_{q})$. In addition, if $q=0$, then
			\begin{equation*}
				\frac{\sqrt{\gamma_0}}{\sqrt{\gamma_0} \, k + \sqrt{Q}}
				\leqslant 
				\alpha_k\leqslant
				\frac{2\sqrt{Q}\alpha_0}{\sqrt{\gamma_0} \, k + 2\sqrt{Q}}.
			\end{equation*}		
			\item 
			If $q = 0$, then for all $k\geqslant 1$,
			\begin{equation*}
				\frac{Q}{(\sqrt{\gamma_0} \, k + \sqrt{Q})^2}
				\leqslant 
				\prod_{i=0}^{k-1}\frac{1}{1+\alpha_i}
				\leqslant
				\frac{4Q}{(\sqrt{\gamma_0} \, k + 2\sqrt{Q})^2}.
			\end{equation*}
		\end{itemize}
	\end{lem}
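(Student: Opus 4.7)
The plan is to analyse the two coupled recurrences by first extracting monotonicity/bounds for $\gamma_k$, then transferring them to $\alpha_k$ via the quadratic relation, and finally sharpening everything in the special case $q=0$ through a well-chosen change of variables.

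First I would rewrite the $\gamma$-recurrence as $\gamma_{k+1} = (\gamma_k + q\alpha_k)/(1+\alpha_k)$, which exhibits $\gamma_{k+1}$ as a convex combination of $\gamma_k$ and $q$ with weights $1/(1+\alpha_k)$ and $\alpha_k/(1+\alpha_k)$. Since $\alpha_k>0$, this places $\gamma_{k+1}$ strictly between $\gamma_k$ and $q$, so by induction $\min\{\gamma_0,q\}\leq\gamma_k\leq\max\{\gamma_0,q\}$ and $\{\gamma_k\}$ is monotone (decreasing if $\gamma_0>q$, increasing if $\gamma_0<q$). Monotonicity plus boundedness yields a limit $\gamma_\infty$, and passing to the limit in the recurrence (using the a priori bound on $\alpha_k$ established next) forces $\gamma_\infty=q$.

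Next, solving $Q\alpha_k^2=\gamma_k(1+\alpha_k)$ for the positive root gives $\alpha_k = (\gamma_k+\sqrt{\gamma_k^2+4Q\gamma_k})/(2Q)$, which is a strictly increasing function of $\gamma_k$ on $[0,\infty)$. Hence the bounds and monotonicity of $\{\gamma_k\}$ transfer directly to $\{\alpha_k\}$, with limit $\alpha_q$ solving $Q\alpha_q^2=q(1+\alpha_q)$. When $q=0$ we have $\alpha_q=0$, so the general lower bound becomes vacuous and one needs the explicit rate, which will follow from the last step.

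The main technical work is the sharp bound in the case $q=0$. The key trick is the substitution $s_k:=1/\sqrt{\gamma_k}$, which linearises the dynamics. With $q=0$ the recurrence reduces to $\gamma_{k+1}(1+\alpha_k)=\gamma_k$, so
\[
\frac{1}{\sqrt{\gamma_{k+1}}}-\frac{1}{\sqrt{\gamma_k}}
= \frac{\sqrt{1+\alpha_k}-1}{\sqrt{\gamma_k}}
= \frac{\alpha_k}{\sqrt{\gamma_k}\,\bigl(1+\sqrt{1+\alpha_k}\bigr)}.
\]
Eliminating $\sqrt{\gamma_k}$ via $\sqrt{Q}\alpha_k=\sqrt{\gamma_k}\sqrt{1+\alpha_k}$, this increment becomes $\frac{1}{\sqrt{Q}\,(1+(1+\alpha_k)^{-1/2})}$, which lies in $(\tfrac{1}{2\sqrt{Q}},\tfrac{1}{\sqrt{Q}})$ for $\alpha_k>0$. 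Summing the telescoping identity from $0$ to $k-1$ gives
\[
\frac{k}{2\sqrt{Q}}+\frac{1}{\sqrt{\gamma_0}}\;\leq\;\frac{1}{\sqrt{\gamma_k}}\;\leq\;\frac{k}{\sqrt{Q}}+\frac{1}{\sqrt{\gamma_0}},
\]
which rearranges exactly to the stated two-sided bound on $\gamma_k/\gamma_0=\prod_{i=0}^{k-1}(1+\alpha_i)^{-1}$. Plugging these bounds back into $\alpha_k=\sqrt{\gamma_k(1+\alpha_k)/Q}$ and using $1\leq\sqrt{1+\alpha_k}\leq\sqrt{1+\alpha_0}\leq\alpha_0+1$ (valid since $\alpha_k$ is decreasing for $q=0$) produces the claimed envelope for $\alpha_k$.

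The only nontrivial step is spotting the change of variables $s_k=1/\sqrt{\gamma_k}$; after that, the computation is routine algebra. All other claims reduce to convex-combination arguments, the quadratic formula, and monotone convergence.
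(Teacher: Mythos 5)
Your argument is essentially correct and complete; note that the paper itself gives no proof of this lemma but simply defers to Lemma~B2 of the cited reference, so there is no in-paper argument to compare against. The convex-combination reading of $\gamma_{k+1}=(\gamma_k+q\alpha_k)/(1+\alpha_k)$, the transfer to $\alpha_k$ via the explicit positive root $\alpha_k=\bigl(\gamma_k+\sqrt{\gamma_k^2+4Q\gamma_k}\bigr)/(2Q)$ (increasing in $\gamma_k$), and above all the substitution $s_k=1/\sqrt{\gamma_k}$ with the exact increment
\[
s_{k+1}-s_k=\frac{1}{\sqrt{Q}\bigl(1+(1+\alpha_k)^{-1/2}\bigr)}\in\Bigl(\tfrac{1}{2\sqrt{Q}},\tfrac{1}{\sqrt{Q}}\Bigr)
\]
do deliver, after telescoping, precisely the stated two-sided bound on $\gamma_k/\gamma_0=\prod_{i=0}^{k-1}(1+\alpha_i)^{-1}$.

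One small repair is needed in your very last step. To get the stated upper bound $\alpha_k\leqslant 2\sqrt{Q}\,\alpha_0/(\sqrt{\gamma_0}\,k+2\sqrt{Q})$ you should use the exact identity $\sqrt{1+\alpha_0}=\sqrt{Q}\,\alpha_0/\sqrt{\gamma_0}$ (from $Q\alpha_0^2=\gamma_0(1+\alpha_0)$), which combined with $\alpha_k\leqslant\sqrt{\gamma_k}\,\sqrt{1+\alpha_0}/\sqrt{Q}$ and your bound on $\sqrt{\gamma_k}$ gives exactly the claimed constant. The chain you wrote, $\sqrt{1+\alpha_0}\leqslant\alpha_0+1$, only yields the weaker constant $2\sqrt{\gamma_0}(1+\alpha_0)\geqslant 2\sqrt{Q}\,\alpha_0$ in the numerator. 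This is a one-line fix and does not affect the rest of the argument; the lower bound $\alpha_k\geqslant\sqrt{\gamma_k}/\sqrt{Q}$ and everything else is fine.
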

	\bibliographystyle{tfnlm}

\end{document}